\definecolor{pinegreen}{rgb}{0.0, 0.47, 0.44}
\def\L{{\mathcal L}}
\def\Re{\mathbb{R}}
\def\hat{\widehat}
\def \R{\mathcal{R}}
\def \Ze{{\mathbb{Z}}}
\def\L{{\mathcal L}}
\def\Re{{\mathbb R}}
\newcommand{\exclude}[1]{}
\newcommand{\bfx}{\bm{x}} \newcommand{\y}{\bm{y}} 
\newcommand{\bfz}{\bm{z}}
\DeclareMathOperator{\Proj}{Proj}
\newcommand*{\QEDA}{\hfill\ensuremath{\square}}
\def\L{{\mathcal L}}
\def\Re{\mathbb{R}}
\def\hat{\widehat}
\def \R{\mathcal{R}}
\def \Ze{{\mathbb{Z}}}
\def\L{{\mathcal L}}
\def\Re{{\mathbb R}}
\def\x{\vect{x}}
\newcommand{\vect}[1]{\boldsymbol{\bm{#1}}}
\newcommand{\subparagraph}{}
\def\Ce{\mathbb{C}}
\DeclareMathOperator{\epi}{epi}
\DeclareMathOperator{\hyp}{hyp}
\title{Second-Order Conic and Polyhedral Approximations of the Exponential Cone:\\
	Application to Mixed-Integer Exponential Conic Programs}
\date{\today}
\titlerunning{SOC and Polyhedral Approximations of the Exponential Cone}
\author{Qing Ye \and Weijun Xie}
\institute{Qing Ye \at
	Virginia Tech, Blacksburg, VA\\
	\email{yqing1@vt.edu}
	\and 
	Weijun Xie \at
	Virginia Tech, Blacksburg, VA\\
	\email{wxie@vt.edu}
}
	\edef\sign{\pgfmathresult}%
	\edef\x{\pgfmathresult}%
	\edef\t{\pgfmathresult}%
	\edef\y{\pgfmathresult}%
\begin{document}
	\maketitle

	\begin{abstract} 
Exponents and logarithms are fundamental components in many important applications such as logistic regression, maximum likelihood, relative entropy, and so on. Since the exponential cone can be viewed as the epigraph of perspective of the natural exponential function or the hypograph of perspective of the natural logarithm function, many mixed-integer convex programs involving exponential or logarithm functions can be recast as mixed-integer exponential conic programs (MIECPs). However, unlike mixed-integer linear programs (MILPs) and mixed-integer second-order conic programs (MISOCPs), MIECPs are still under development. To harvest the past efforts on MILPs and MISOCPs, this paper presents second-order conic (SOC) and polyhedral approximation schemes for the exponential cone with application to MIECPs. To do so, we first extend and generalize existing SOC approximation approaches in the extended space, propose new scaling and shifting methods, prove approximation accuracies, and derive lower bounds of approximations. We then study the polyhedral outer approximation of the exponential cones in the original space using gradient inequalities, show its approximation accuracy, and derive a lower bound of the approximation. When implementing SOC approximations, we suggest learning the approximation pattern by testing smaller cases and then applying it to the large-scale ones; and for the polyhedral approximation, we suggest using the branch and cut method for MIECPs. Our numerical study shows that the proposed methods show speed-ups over solver MOSEK for MIECPs, and the scaling, shifting, and polyhedral outer approximation methods work very well.

\end{abstract}

\section{Introduction}
Conic programming has recently witnessed a rapid growth of interest in both industry and academia and has been successfully applied to many domains (see, e.g.,  \cite{nemirovski2007advances,lobo1998applications,alizadeh2003second,vandenberghe1996semidefinite,wolkowicz2012handbook}). Notably, the recent version of solver MOSEK \citep{aps2021mosek} can solve quite large-scale continuous exponential conic programs (ECPs). However, mixed-integer exponential conic programs (MIECPs) are still under development. It is known that polyhedral approximation results and strong valid inequalities developed during the last one or two decades can significantly enhance the capability of existing solvers such as Gurobi and CPLEX for solving mixed-integer linear programs (MILPs) and mixed-integer second-order conic programs (MISOCPs). 
To harvest these past efforts on MILPs and MISOCPs,
we study second-order conic (SOC) approximations and polyhedral approximations of the exponential cone in the hope of solving MIECPs efficiently. 

\subsection{Setting}

This paper focuses on the following MIECP
\begin{align}\label{eq_miecp}
	\min_{\bm{x}\in X\subset \Ze^{t}\times \Re^{n-t}}\left\{\bm{c}^\top\bm{x}: \left(\bm{a}_{i1}^\top \bm{x}+b_{i1},  \bm{a}_{i2}^\top \bm{x}+b_{i2},  \bm{a}_{i3}^\top \bm{x}+b_{i3}\right)\in 	K_{\exp}(0),\forall i\in [m]\right\}
\end{align}
where set $X$ is mixed-integer and compact, $t\in [0,n]$, and $\bm{A}_1,\bm{A}_2,\bm{A}_3\in \Re^{m\times n}, \bm{b}_1,\bm{b}_2,\bm{b}_3\in \Re^m$ are data. For a given $\alpha\in \Re$, we let $K_{\exp}(\alpha)$ denote the parametric exponential cone
\begin{subequations}\label{eq_ec12}
	\begin{align}\label{eq_ec1}
		K_{\exp}(\alpha)&=\left\{\bm{x}\in \R^{2}\times \Re: \left(x_1/x_2, x_3/x_2\right)\in\hyp\log(\alpha)\right\} \\
		&=\left\{\bm{x}\in \R^{2}\times \Re: \left(x_3/x_2, x_1/x_2\right)\in\epi\exp(\alpha)\right\}\label{eq_ec2}
	\end{align}
\end{subequations}
where the interval $\R:=[1/M,M]$ for some positive parameter $M>1$, 
set $\hyp\log(\alpha)$ denotes the hypograph of the logarithm function parameterized by $\alpha$
\[\hyp\log(\alpha)=\left\{(\bm{x},t): \log(\bm{x})\geq t-\alpha \right\},\]
and set $\epi\exp(\alpha)$ denotes the epigraph of the exponential function parameterized by $\alpha$
\[\epi\exp(\alpha)=\left\{(\bm{x},t): \exp(\bm{x}-\alpha)\leq t \right\}.\]
We remark that for the modeling purpose, one can assume that $M$ can be $\infty$ and by default, we let $0\times \infty=0$ and $1/\infty=0$, under which set $K_{\exp}(\alpha)$ remains to be closed; however, for the analytical purpose, our approximation results only hold when $M$ is finite.
Besides, the parameter $\alpha\in \Re$ is meant to quantify the error of the proposed approximations. It is also worthy of mentioning that when $\alpha=0$, i.e., no approximation error, the exponential cone $K_{\exp}(0)$ in  \eqref{eq_ec12} can be viewed as the hypograph of perspective of the logarithm function or the epigraph of perspective of the exponential function. These two interpretations motivate us to study distinct approximation schemes.

As the exponential cone $K_{\exp}(0)$ is fundamental to an MIECP, this paper plans to develop SOC approximations of the exponential cone $K_{\exp}(0)$ of the form
\begin{align}\label{eq_sc}
	K_{sc}(n^s,m^s)=\left\{\bm{x}\in \Re^{n^s} : \left((\bm{a}^s_{i1})^\top \bm{x}+b^s_{i1},  (\bm{a}^s_{i2})^\top \bm{x}+b^s_{i2},  (\bm{a}^s_{i3})^\top \bm{x}+b^s_{i3}\right)\in \L_3,\forall i\in [m^s]
	\right\},
\end{align}
where the Lorentz cone is defined as $\L_{q+1}=\{\bm{y}\in \Re^{q+1}: y_{q+1}\geq \sqrt{\sum_{i\in [q]}y_i^2}\}$ with $q\in \Ze_{++}$
and $\bm{A}_1^s,\bm{A}_2^s,\bm{A}_3^s\in \Re^{m^s\times n^s}, \bm{b}_1^s,\bm{b}_2^s,\bm{b}_3^s\in \Re^{m^s}$ are data. We would like to show that for any $\epsilon>0$, there exists a second-order cone $K_{sc}(n^s,m^s)$ with $n^s+m^s=O(f^s(1/\epsilon,M))$ 
depending on $1/\epsilon$ and $M$ such that
\[K_{\exp}(-\epsilon)\subseteq K_{sc}(n^s,m^s) \subseteq K_{\exp}(\epsilon),\]
where $f^s(1/\epsilon,M)$ is a polynomial function.
According to the seminal work \citep{ben2001polyhedral} on the polyhedral approximations of SOC programs, there exists a polyhedral approximation of the exponential cone $K_{\exp}(0)$ such that
\begin{align}\label{eq_lp}
	K_{p}(n^p,m^p)=\left\{\bm{x}\in \Re^{n^p} : (\bm{a}^p_{i})^\top \bm{x}\geq b^{p}_{i},\forall i\in [m^p]
	\right\},
\end{align}
where $\bm{A}^p\in \Re^{m^p\times n^p}, \bm{b}^p\in \Re^{m^p}$ are data. Using the results in \cite{ben2001polyhedral}, for any $\epsilon>0$ polyhedral approximation such that
\[K_{\exp}(-\epsilon)\subseteq K_{p}(n^p,m^p) \subseteq K_{\exp}(\epsilon),\]
the size of polyhedral approximation should be in the order of the size of second-order cone $K_{sc}(n^s,m^s)$ multiplied by the $\log(1/\epsilon)$, i.e., $n^p+m^p=O((n^s+m^s)\log(1/\epsilon))$. Since this type of polyhedral approximations of SOC programs has been successfully implemented in solvers such as Gurobi and CPLEX, this paper mainly focuses on polyhedral approximation in the original space, i.e., we mainly consider $n^p=3$ in \eqref{eq_lp}.

\subsection{Relevant Literature}
Many interesting nonlinear programs can be recast as an ECP (see, e.g., \cite{serrano2015algorithms} for an overview). 
Below are some examples: 
\begin{itemize}
	\item (Entropy) The hypograph of the entropy function $f(x)=-x\log(x)$ is exponential-conic representable
	\[\hyp f:=\left\{(x,t): -x\log(x)\geq t\right\}=\left\{(x,t): (1,x,t)\in K_{\exp}(0)\right\}.\]
	
	\item (Kullback-Leibler Divergence) The epigraph of the Kullback-Leibler divergence or relative entropy function $f(x,y)=x\log(x/y)$ is exponential-conic representable
	\[\epi f:=\left\{(x,y,t): x\log(x/y)\leq t\right\}=\left\{(x,y,t): (y,x,-t)\in K_{\exp}(0)\right\}.\]
	
	\item (Logistic Regression) Given $n$ data points $\{(\bfx_i,y_i)\}_{i\in[n]}\subseteq \Re^d\times \{0,1\}$, the logistic regression with $L_1$ penalty can be formulated as the following unconstrained optimization problem 
	\[\min_{\bm{\theta}\in \Re^d} \sum_{i\in [n]}\left[-y_i \log(h_{\bm{\theta}}(\bfx_i))-(1-y_i) \log(1-h_{\bm{\theta}}(\bfx_i))\right]+ \lambda \|\bm{\theta}\|_1,\]
	where $h_{\bm{\theta}}(\bfx_i)=[1+\exp(-\bm{\theta}^{\top}\bfx_i)]^{-1}$ denotes the sigmoid function. The logistic regression can be recast as the following ECP
	\[\min_{\bm{\theta}\in \Re^d, \bm{p}_1,\bm{p}_2} \left\{ \sum_{i\in [n]} t_i+\lambda \|\bm{\theta}\|_1: \begin{array}{l}\displaystyle
		p_{i1}+p_{i2}=1, \left(p_{i1}, 1,(1-2y_i)\bm{\theta}^{\top}\bfx_i-t_i\right)\in K_{\exp}(0), \forall i \in [n],\\\displaystyle
		\left(p_{i2}, 1,-t_{i}\right)\in K_{\exp}(0), \forall i \in [n] 
	\end{array}\right\}.\]
	
	\item (Geometric Programming) Geometric programming problems  \cite{boyd2007tutorial} are nonlinear optimization problems of the form
	\[\min_{\bm{x}}  \left\{ f_{0}(\bfx): f_{i}(\bfx)\leq 1, \forall i \in [m],x_j\geq 0, \forall j \in [n] \right\}.\]
	Above, function $f_{i}(\bm{x})=\sum_{k\in [T_i]}c_{ik}\prod_{j \in [n]}x_{j}^{a_{ijk}}$ is a posynomial for each $i \in [0, m]$, where $T_i\in \Ze_+$, $c_{ik}\in\Re_+$, and $a_{ijk} \in\Re$ for each $k\in [T_i]$ and $j\in [n]$. 
	Letting $x_j=\exp(y_j)$ for all $j \in [n]$ and substituting back the expressions, the epigraph of the revised ``posynomial" $f_{i}(\bm{y})$ is exponential-conic representable
	\[\epi f_i=\left\{(\bm{y},t): \sum_{k\in [T_i]} p_{ik} \leq 1, \left(p_{ik}, 1, \bm{a}_{ik}^{\top}\y+\log(c_{ik})-t \right)\in K_{\exp}(0), \forall k\in [T_i]\right\}.\]
	Therefore, the geometric programming problems can be equivalently transformed as the following ECPs
	\[\min_{\bm{y}}  \left\{ t_0: (\bm{y},t_0)\in \epi f_0, (\bm{y},1)\in\epi f_i,\forall i \in [n]\right\}.\]
\end{itemize}

The ECPs or MIECPs also exist in many areas such as finance, defense, and healthcare. For example, the works in manpower planning \citep{jaillet2018strategic}, electric vehicle charging management \citep{chen2021exponential}, Kullback-Leibler divergence constrained distributionally robust optimization \citep{kocuk2020conic} have been using ECP approaches. 
Many other works might not have been fully aware of but can be reformulated as ECPs or MIECPs. For instance, in \cite{patriksson2008survey}, the authors used the limited resources on-hand to maximize the probability of searching for objects, where the hypograph of the searching probability, in fact, can be represented as an exponential cone. Thus, their problem can be formulated as an MIECP. 
In \cite{somers2020sparse}, the authors proposed an ECP for sparse resource allocation in control of spreading processes. Their proposed method could be applied to minimize the spreading rate of epidemics and wildfires. Recently, the work in \cite{zhu2021joint} proposed a joint estimation and robustness optimization framework that could be modeled with ECPs. 
As an important class of ECP problems, geometric programming has a variety of applications, including telecommunication \cite{chiang2005geometric}, circuit design \cite{boyd2005digital}, and power control \cite{chiang2007power}. 

Recently, several works focused on solving continuous ECPs. For example, solver SCS \cite{o2016conic} is a first-order method using operator splitting and homogeneous self-dual embedding, which can handle symmetric cones as well as the power and exponential cones. In \cite{serrano2015algorithms}, the authors analyzed the theoretical properties of some algorithms, such as primal and dual barrier methods, and extended an interior-point conic solver ECOS \citep{domahidi2013ecos} to solve continuous ECPs. Alfonso \cite{papp2021alfonso} is a conic solver for nonsymmetric cones, including power and exponential cones, where the implementation is based on a homogeneous interior-point method proposed by \cite{skajaa2015homogeneous}. The authors in \cite{dahl2021primal} proposed a new primal-dual interior-point algorithm for continuous ECPs with a novel higher-order search direction. The work \cite{badenbroek2021algorithm} analyzed theoretical properties of the algorithm implemented in MOSEK and proposed a new one for solving nonsymmetric conic optimization such as ECPs. 
The literature of generic approaches for solving ECPs, especially MIECPS, is relatively sparse. 
It is known that MOSEK can solve MIECPs  \citep{dahl2021primal} as early as 2018, which is mainly based on the interior point method and branch and bound. 
Parallelly, the work in \cite{coey2020outer} presented Pajarito, a generic branch and bound algorithm with outer approximation using primal and dual information for solving mixed-integer convex problems involving positive semidefinite, second-order, and exponential cones. 
Different from their methods, our methods are based on second-order approximations and gradient-based polyhedral approximations. 

Our works on SOC approximations are motivated but different from the two seminal ones. The first interesting work is \cite{fawzi2019semidefinite}, which studied the semidefinite approximations of the matrix logarithm. They proposed a special function, whose integral 
over a particular domain is equal to the logarithm function and is semidefinite programming representable. However, their analysis is quite limited to a particular function, and mixed-integer semidefinite programming is known to be notoriously difficult to solve. On the contrary, we propose a large family of generating functions, which are SOC representable, and whose integrals from $-1$ to $1$ are equal to the logarithm function. 
Our analysis is much simplified, and given an approximate solution, we propose a new scaling method, which works the best in our numerical study.
The second interesting work is \cite{ben2001lectures}, which studied the SOC approximations of the exponential function. We analyze their approximation errors and propose a new shifting method with a given approximate solution. The numerical study shows that the proposed shifting method works well while other exponential function-based methods can have numerical issues. Besides, different from both works, we also derive lower bounds for the approximation errors and study polyhedral outer approximation based on gradient inequalities.


\subsection{Summary of Contributions} 
This paper generalizes the existing and develops new SOC approximations of the exponential cone \eqref{eq_ec12}. We also study polyhedral outer approximation based on gradient inequalities. Our main contributions are summarized as below:
\begin{enumerate}[(i)]
	\item We propose a generic SOC approximation framework of the exponential cone based on the logarithmic form \eqref{eq_ec1}. We prove the approximation accuracy using the Gaussian quadrature. We tailor the proof to three special classes of SOC approximation. In particular, our proposed scaling approximation scheme using an approximate solution is numerically demonstrated to work the best.
	\item We prove the approximation accuracies of the SOC approximations of the exponential cone using the exponential form \eqref{eq_ec2}. We show that the even-order Taylor expansions of the exponential function can be written as the sum of squares and disprove a SOC representation proposed by \cite{ben2001lectures}. We develop a new shifting method using an approximate solution, which overcomes the numerical issues caused by other approximation schemes using the exponential form \eqref{eq_ec2}.
	\item We study the minimum number of variables and SOC constraints needed to approximate the exponential cone to a desirable accuracy, i.e., study a lower bound of SOC approximations for the exponential cone. 
	\item We propose a polyhedral outer approximation of the exponential cone in the original space using gradient inequalities and study its upper and lower bounds for the number of inequalities to achieve a desirable approximation accuracy.
	\item Our numerical study shows that the proposed scaling, shifting, and polyhedral outer approximation methods outperform solver MOSEK for solving MIECPs and can achieve up to 20 times speed-ups.
\end{enumerate}
The main theoretical approximation complexities of this paper are displayed in Table~\ref{table_summary}. 


\begin{table}[htbp]
	\centering
	\caption{Summary of Approximation Methods of the Exponential Cone}
	\vspace{-5pt}
	\label{table_summary}
	\begin{threeparttable}
		\scriptsize\setlength{\tabcolsep}{1.0pt}
		\setlength\extrarowheight{2.0pt}
		\begin{tabular}{c|c|c|c|c|c}
			\hline
			& \multicolumn{4}{c|}{SOC Approximations\tnote{i}}  & Polyhedral Approximation\tnote{i}\\ \hline\hline
			\multirow{2}{*}{Methods} & \multicolumn{3}{c|}{Logarithm-based \tnote{iii}}  &  & \multirow{2}{*}{Original Space}\\ \cline{2-4}
			&  Example \ref{emp_1}     &  Example \ref{emp_2}      &  Example \ref{emp_3}  & &  \\ \cline{1-4} \cline{6-6} 
			\multirow{2}{*}{Complexity\tnote{ii}} &   $O(\sqrt{M}$   &  $O(\log\log(M)$     &   $O(\log(1-\delta)/\log(\delta)$   & & \multirow{2}{*}{$\Theta(\log(M)+1/\sqrt{\epsilon})$}    \\ 
			&  $+ \log(\sqrt{M^3 /\epsilon}))$     & $+\sqrt{\log(M)+\log(1/\epsilon)})$   & $+\log(\epsilon)/\log(\delta))$ & &   \\ \cline{1-4} \cline{6-6} 
			Outer Approximation & No & No & No & & Yes\\ \hhline{=|=|=|=|=|=}
			& \multicolumn{4}{c|}{Exponential-based \tnote{iv}} &  \\ \cline{1-5}  
			Methods   &   Section \ref{sec_exp_lim}   &   Section \ref{sec_exp_taylor}   &   Section \ref{sec_exp_lim} Shift  &  Section \ref{sec_exp_taylor} Shift & \\ \cline{1-5} 
			\multirow{2}{*}{Complexity\tnote{ii}}   &  $O(\log(M^2\log M)$   &  $O(\log(M^2\log(M)/s)$    &  \multirow{2}{*}{$O(\log(\delta^2/\epsilon))$}    &  $O(\log(\delta/s)$ & \\ 
			& $+\log(1/\epsilon))$     & $+\log(1/\epsilon)/s)$    &    & $+\log(1/\epsilon)/s)$ & \\ \cline{1-5} 
			Outer Approximation & Yes & No & Yes & No &\\ \hhline{======}
			Lower Bound & \multicolumn{4}{c|}{$\Omega (1+\log\log(M)/\log(1/\epsilon))$} & $\Omega (\log\log(M)+\log(1/\epsilon))$ \\ \hline
		\end{tabular}
		\begin{tablenotes}
			\item[i] $\epsilon$ is the approximation accuracy;
			\item[ii] Complexity is the number of variables, SOC and linear constraints needed;
			\item[iii] $\delta\in(0,1)$ is quality of the approximate solution $\hat{x}_1/\hat{x}_2$ such that $1-\delta\leq (x_1/x_2)/(\hat{x}_1/\hat{x}_2) \leq 1+\delta$;
			\item[iv] $\delta>0$ is quality of the approximate solution $\hat{x}_3/\hat{x}_2$ such that $\left|x_3/x_2-\hat{x}_3/\hat{x}_2\right| \leq \delta$.
		\end{tablenotes}
	\end{threeparttable}
\end{table}

\noindent\textbf{Notation.}  The following notation is used throughout the paper. 
We use bold letters (e.g., $\vect{x},\vect{A}$) to denote vectors and matrices and use corresponding non-bold letters to denote their components. 
Given an integer $n$, we let $[n]:=\{1,2,\ldots,n\}$, $[0, n]:=\{0, 1,2,\ldots,n\}$, and use $\Re_+^n:=\{\vect{x}\in \Re^n:x_i\geq0, \forall i\in [n]\}$. Given a function $f:\Re^n\rightarrow\Re$, its epigraph and hypograph are $\epi f=\{(\bm{x},t): f(\bm{x})\leq t\}$ and $\hyp f =\{(\bm{x},t): f(\bm{x})\geq t\}$, respectively. For the sake of simplicity, the logarithm function and exponential function, if not being specified, are both meant to be natural ones in this paper. 
Additional notation will be introduced as needed.

\noindent\textbf{Organization.}	 The remainder of the paper is organized as follows. 
Section \ref{sec_log} develops and analyzes the approximation schemes using the logarithmic form \eqref{eq_ec1}. Section \ref{sec_exp} develops and analyzes the approximation schemes using the exponential form \eqref{eq_ec2}. Section \ref{sec_bound} shows lower bounds of approximations. Section \ref{sec_outer} shows polyhedral outer approximation using gradient inequalities and proves the upper and lower bounds of approximations. Section \ref{sec_numerical} shows the numerical illustration. Finally, Section \ref{sec_conclusion} concludes this paper.

\section{Approximations Using the Logarithmic Form \eqref{eq_ec1}}\label{sec_log}
In this section, we study approximations of the exponential cone using the logarithm form \eqref{eq_ec1}. The key idea is to find a family of functions whose integrals are equal to the logarithm function.
\subsection{Approximating the Logarithm Function Based on Integrals of Generating Functions}
Since it is unlikely that the logarithm function $\log(x)$ is SOC representable, we plan to approximate the logarithm $\log(x)$ with $x\in \R$ by rewriting it as the integral of an SOC representable function. 

To begin with, let us define the generating functions for the logarithm function. 
\begin{definition}[Generating Functions]\label{def_gen}A function $\phi:[-1,1]\times \R\rightarrow \Re$ is a ``\textit{generating function}" for the logarithm function $\log(x)$ if it satisfies
	\begin{itemize}
		\item[(a)] For any given $t\in [-1,1]$, function $\phi(t,x)$ is concave and continuous, and its hypograph
		\[\hyp {\phi}(t):=\left\{(x,v): \phi(t,x)\geq v\right\}\]
		is SOC representable in the domain $\R$; and
		\item[(b)] For any $x\in \R$, the following identity must hold
		\begin{align}\label{eq_integral}
			\int_{-1}^1 \phi(t,x)dt=\log(x)+\textrm{const}. 
		\end{align}
	\end{itemize}
\end{definition}
Note that the generating functions are useful to derive SOC approximations of the logarithm function since the one-dimensional integration \eqref{eq_integral} admits efficient approximations (e.g., using Gaussian quadratures). In fact, there exist many such generating functions. In Section \ref{sec_ex1}-Section \ref{sec_ex3}, we show three examples, which are quite effective in our numerical study.

Although generating functions in Definition \ref{def_gen} are SOC representable, it still remains to address the left-hand integration of \eqref{eq_integral}. Fortunately, the modern numerical methods are quite mature in one-dimensional integration. Thus, we propose to approximate the left-hand integration of \eqref{eq_integral} using well-known Gaussian quadruple (see, e.g., Chapter 19 \citep{trefethen2019approximation}). Suppose $\{t_k\}_{k\in [N]}\subseteq [-1,1]$ be $N$ quadrature points and their corresponding positive weights $\{w_k\}_{k\in [N]}$. Then the logarithm function $\log(x)$ can be tightly approximated by
\begin{align*}
	\int_{-1}^1 \phi(t,x)dt=\log(x)+\textrm{const} \approx \sum_{k\in [N]}w_k \phi(t_k,x).
\end{align*}
Accordingly, the hypograph of $\log(x)$ can be approximated by
\begin{equation}\label{eq_log_approx}
	\begin{aligned}
		\hyp {\log}(\alpha)\vert_{\alpha=0}&:=\left\{(x,\nu)\in \R\times\Re:\log(x)\geq \nu-\alpha\vert_{\alpha=0}\right\}\\
		&\approx \left\{(x,\nu)\in \R\times\Re: \sum_{k\in [N]}w_k v_k\geq \nu+\textrm{const}, (x,v_k)\in \hyp {\phi}(t_k),\forall k\in [N]\right\}:= \hat{H}_{\phi,N},
	\end{aligned}
\end{equation}
where constant $\alpha$ is useful to characterize the approximation accuracy, and the hypograph $ \hyp {\phi}(t)$ is supposed to be SOC representable. The Gaussian quadrature is quite accurate to approximate one-dimensional integration. In fact, according to Chapter 19 \citep{trefethen2019approximation}, the error of Gaussian quadrature decays geometrically.
\begin{theorem}\label{prop_approx_log}
	Suppose that there exists some $\rho>1$ such that
	\[\sup_{x\in \R, z\in \Ce}\left\{|\phi(z,x)|: z=\frac{\rho}{2}\exp(i\theta)+
	\frac{\rho^{-1}}{2}\exp(-i\theta):\theta\in [0,2\pi]
	\right\} \leq L<\infty.\]
	Then the approximation accuracy of Gaussian quadrature satisfies
	\[\left|\log(x)+\textrm{const}-\sum_{k\in [N]}w_k \phi(t_k,x)\right|\leq \frac{64 L\rho^{-2N}}{15(\rho^2-1)}.\]
	Thus, achieving the $\epsilon-$approximation accuracy with $\epsilon>0$, we can choose the number of points $N=O(\log_{\rho}(L/\epsilon))$ such that
	$\hyp {\log}(-\epsilon)\subseteq  \hat{H}_{\phi,N} \subseteq \hyp {\log}(\epsilon)$.
\end{theorem}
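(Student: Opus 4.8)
The plan is to reduce the claim to the classical error estimate for Gauss--Legendre quadrature. The first step is to recognize that the curve traced by $z=\tfrac{\rho}{2}\exp(i\theta)+\tfrac{\rho^{-1}}{2}\exp(-i\theta)$, $\theta\in[0,2\pi]$, is exactly the Bernstein ellipse $E_\rho$ with foci at $\pm 1$, semi-major axis $\tfrac12(\rho+\rho^{-1})$, and semi-minor axis $\tfrac12(\rho-\rho^{-1})$. Thus the hypothesis says that for each fixed $x\in\R$ the map $z\mapsto\phi(z,x)$ is bounded by $L$ on $E_\rho$, and I would use (as is implicit in the hypothesis) that it is analytic inside and on $E_\rho$, which is precisely what makes the quadrature estimate applicable. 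By the maximum modulus principle, the boundary bound then propagates to the interior.

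Next I would fix $x\in\R$ and invoke the standard Gauss--Legendre quadrature error bound (Chapter 19 of \citep{trefethen2019approximation}): if $g$ is analytic inside and on $E_\rho$ with $|g|\le L$ there, then the $N$-point rule satisfies $\bigl|\int_{-1}^1 g(t)\,dt-\sum_{k\in[N]}w_k g(t_k)\bigr|\le \tfrac{64L}{15(\rho^2-1)}\rho^{-2N}$. Applying this with $g(t)=\phi(t,x)$ and substituting the generating-function identity \eqref{eq_integral}, namely $\int_{-1}^1\phi(t,x)\,dt=\log(x)+\textrm{const}$, yields the displayed bound at once; note that $x\in\R$ is arbitrary while the right-hand side is independent of $x$, so the estimate is uniform.

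For the accuracy statement I would set the error bound to be at most $\epsilon$. Solving $\tfrac{64L}{15(\rho^2-1)}\rho^{-2N}\le\epsilon$ gives $2N\log\rho\ge\log\bigl(\tfrac{64L}{15(\rho^2-1)\epsilon}\bigr)$, hence $N=O(\log_\rho(L/\epsilon))$. It then remains to translate the pointwise bound into the set inclusions. Writing $S_N(x)=\sum_{k\in[N]}w_k\phi(t_k,x)-\textrm{const}$ and using positivity of the weights $w_k$, the auxiliary variables in the definition of $\hat{H}_{\phi,N}$ are best taken at $v_k=\phi(t_k,x)$ (since each $\hyp\phi(t_k)$ enforces $v_k\le\phi(t_k,x)$ and larger $v_k$ only relaxes $\sum_k w_k v_k\ge\nu+\textrm{const}$), so $\hat{H}_{\phi,N}$ reduces to $\{(x,\nu):\nu\le S_N(x)\}$. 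The error bound gives $\log(x)-\epsilon\le S_N(x)\le \log(x)+\epsilon$, from which $\hyp\log(-\epsilon)=\{(x,\nu):\nu\le\log(x)-\epsilon\}\subseteq\hat{H}_{\phi,N}\subseteq\{(x,\nu):\nu\le\log(x)+\epsilon\}=\hyp\log(\epsilon)$ follows immediately.

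The substantive content is almost entirely the classical quadrature estimate, so the only genuine obstacle is verifying its hypothesis: that $z\mapsto\phi(z,x)$ extends analytically to the closed region bounded by $E_\rho$, not merely that it is bounded on the boundary curve. For the concrete generating functions introduced in Section~\ref{sec_ex1}--Section~\ref{sec_ex3} this should hold because they are (restrictions of) rational functions in $z$ with no poles inside $E_\rho$, so analytic continuation and the uniform bound $L$ can be checked example by example; I would flag this as the point requiring validation for each instance. The remaining steps --- identifying the Bernstein ellipse, substituting \eqref{eq_integral}, inverting the bound for $N$, and collapsing $\hat{H}_{\phi,N}$ to a single logarithmic envelope --- are routine.
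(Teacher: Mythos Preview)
Your proposal is correct and matches the paper's approach: the paper does not give an explicit proof of this theorem but rather invokes the classical Gauss--Legendre error estimate from Chapter~19 of \cite{trefethen2019approximation}, which is precisely what you do. Your write-up is in fact more detailed than the paper itself, correctly identifying the Bernstein ellipse, flagging the implicit analyticity hypothesis, and spelling out the reduction of $\hat{H}_{\phi,N}$ to the sublevel set of $S_N$ via positivity of the weights.
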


		The key ingredients in Theorem \ref{prop_approx_log} are two parameters $\rho$ and $L$, which require effort to derive. Fortunately, we can derive these parameters for the examples in Section \ref{sec_ex1}-Section \ref{sec_ex3} explicitly.
		
	\subsubsection{Example 1}\label{sec_ex1}
	\begin{example}\label{emp_1}
		Suppose $\phi_1(t,x)=\frac{a(t+1)^{a-1}(x-1)}{2^a+(t+1)^a(x-1)}$ for some positive parameter $a>0$. Then we have $\int_{-1}^1 \phi_1(t,x)dt=\log(x)$ and for any $t\in [-1,1]$, the hypograph of $\phi_1(t,\cdot)$ is SOC representable, i.e.,
		\begin{align}\label{eq_phi_rep_1}
			&\hyp {\phi_1}(t):=\left\{(x,v)\in \R\times \Re: \begin{aligned}
				&r_1=a-(t+1)v\geq 0, \\
				&r_2=a(t+1)^{a-1}(x-1)-2^a v \geq 0,\\
				&(v\sqrt{2^{a+2}(t+1)},r_1-r_2,r_1+r_2)\in \L_3
			\end{aligned}
			\right\}.
		\end{align}
	\end{example}
	%
	%
Next, we study the number of variables and SOC constraints needed to achieve the $\epsilon-$approximation accuracy using Example \ref{emp_1}.
	\begin{corollary}\label{cor_phi_1} For the generating function in Example \ref{emp_1} with $a=1$, we can choose $\rho=(M+1)/(M-1)+2/\sqrt{M}$, and
		\[L=\frac{1}{\frac{M+1}{M-1}-\frac{\rho}{2}-\frac{\rho^{-1}}{2}}.\]
		Thus, achieving the $\epsilon-$approximation accuracy requires the number of points $N=O(\sqrt{M}\log(\sqrt{M^3 /\epsilon}))$  such that
		$\hyp {\log}(-\epsilon)\subseteq  \hat{H}_{\phi_1,N} \subseteq \hyp {\log}(\epsilon)$. That is, there should be $O(\sqrt{M}\log(\sqrt{M^3 /\epsilon}))$ number of variables and SOC constraints in the representation of set $\hat{H}_{\phi_1,N}$, which admits
		\begin{equation}
			\hat{H}_{\phi_1,N} = \left\{(x,\nu)\in \R\times\Re: \begin{aligned}
				&\sum_{k\in [N]}w_k v_k\geq \nu,\\ 
				&r_{1k}=a-(t_k+1)v_k\geq 0, \forall k\in [N],\\
				&r_{2k}=a(t_k+1)^{a-1}(x-1)-2^a v_k \geq 0, \forall k\in [N],\\
				&(v_k\sqrt{2^{a+2}(t_k+1)},r_{1k}-r_{2k},r_{1k}+r_{2k})\in \L_3, \forall k\in [N]
			\end{aligned}\right\}.
		\end{equation}
	\end{corollary}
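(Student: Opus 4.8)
The plan is to invoke Theorem~\ref{prop_approx_log} with the stated $\rho$ and $L$, checking its two hypotheses (analyticity inside, and uniform boundedness on, the relevant ellipse), and then to convert the resulting estimate $N=O(\log_\rho(L/\epsilon))$ into the claimed complexity. Throughout I write $c:=\frac{M+1}{M-1}$ and fix $a=1$, so that $\phi_1(t,x)=\frac{x-1}{2+(t+1)(x-1)}$; the integral identity $\int_{-1}^1\phi_1(t,x)\,dt=\log x$ and the SOC representability \eqref{eq_phi_rep_1} are already supplied by Example~\ref{emp_1}, so all the work lies in controlling $\phi_1$ as a function of its (complexified) first argument.

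First I would expose the singularity of $\phi_1$. Dividing through by $x-1$, for complex $z$ one has $\phi_1(z,x)=\frac{1}{(z+1)+2/(x-1)}=\frac{1}{z-z_0(x)}$, where $z_0(x):=-1-\frac{2}{x-1}$ is the unique pole. A short computation shows that as $x$ ranges over $\R=[1/M,M]$ the poles are real with $|z_0(x)|\ge c$, the extreme values $z_0=\pm c$ being attained at $x=1/M$ and $x=M$. The curve $z=\frac{\rho}{2}e^{i\theta}+\frac{\rho^{-1}}{2}e^{-i\theta}$ appearing in Theorem~\ref{prop_approx_log} is the Bernstein ellipse with foci $\pm1$, whose rightmost real point is $\frac{\rho+\rho^{-1}}{2}$. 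I would then verify that the choice $\rho=c+2/\sqrt{M}$ keeps this vertex strictly short of $c$: the threshold $\rho_{\max}$ solving $\frac{\rho+\rho^{-1}}{2}=c$ equals $\rho_{\max}=c+\sqrt{c^2-1}=\frac{\sqrt M+1}{\sqrt M-1}$, and since $\sqrt{c^2-1}=\frac{2\sqrt M}{M-1}>\frac{2}{\sqrt M}$ we get $\rho<\rho_{\max}$. Hence every pole lies strictly outside the closed ellipse and $\phi_1(\cdot,x)$ is analytic inside it for all $x\in\R$, giving the analyticity hypothesis.

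Next I would pin down $L$. Since $|\phi_1(z,x)|=1/|z-z_0(x)|$, bounding $|\phi_1|$ on the ellipse is the same as bounding below the distance from the ellipse to the pole set $(-\infty,-c]\cup[c,\infty)$. Because the poles are real and the ellipse is convex and symmetric with both vertices inside $(-c,c)$, the nearest approach is between the real vertex $\frac{\rho+\rho^{-1}}{2}$ and the closest pole $c$ (a one-line Lagrange computation confirms the nearest ellipse point to an axial external point beyond the vertex is the vertex). Thus $\mathrm{dist}=c-\frac{\rho+\rho^{-1}}{2}$ and $L=\big(c-\frac{\rho}{2}-\frac{\rho^{-1}}{2}\big)^{-1}$, exactly as claimed; in particular $L<\infty$, so both hypotheses of Theorem~\ref{prop_approx_log} hold.

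Finally I would extract the complexity from $N=O(\log_\rho(L/\epsilon))=O\big((\log L+\log(1/\epsilon))/\log\rho\big)$. The denominator is easy: $\rho-1=\frac{2}{M-1}+\frac{2}{\sqrt M}\ge\frac{2}{\sqrt M}$ while $\rho$ stays bounded, so $\log\rho=\Omega(1/\sqrt M)$, i.e.\ $1/\log\rho=O(\sqrt M)$. The numerator needs the one genuinely delicate estimate, which I expect to be the main obstacle: showing $L=\Theta(M^2)$, equivalently $c-\frac{\rho+\rho^{-1}}{2}=\Theta(1/M^2)$. The difficulty is that as $M\to\infty$ both $c$ and $\frac{\rho+\rho^{-1}}{2}$ tend to $1$, so the gap is a near-cancellation; the clean route is to write it, with $g(\rho)=\frac{\rho+\rho^{-1}}{2}$, as $g(\rho_{\max})-g(\rho)=\tfrac12(\rho_{\max}-\rho)\,\frac{\rho\rho_{\max}-1}{\rho\rho_{\max}}$, use the exact identity $\rho_{\max}-\rho=\frac{2}{\sqrt M(M-1)}$, and note $\rho\rho_{\max}-1=\Theta(1/\sqrt M)$ and $\rho\rho_{\max}=\Theta(1)$. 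This yields $\log L=O(\log M)$, hence $\log(L/\epsilon)=O(\log M+\log(1/\epsilon))=O(\log\sqrt{M^3/\epsilon})$ and $N=O(\sqrt M\,\log\sqrt{M^3/\epsilon})$, giving the inclusions $\hyp\log(-\epsilon)\subseteq\hat H_{\phi_1,N}\subseteq\hyp\log(\epsilon)$. The explicit form of $\hat H_{\phi_1,N}$ then follows by substituting \eqref{eq_phi_rep_1} with $a=1$ into the generic approximation \eqref{eq_log_approx}, and since each of the $N$ quadrature points contributes a constant number of variables and a single $\L_3$ constraint, the total size is $O(N)$, matching the stated bound.
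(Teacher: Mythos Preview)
Your proof is correct and reaches the same conclusion as the paper's, but the route differs in two notable places.

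For the bound $L$, the paper simply writes $|\phi_1(z,x)|=\big|\frac{x+1}{x-1}+z\big|^{-1}$, applies the reverse triangle inequality $|\cdot|\ge \big|\,|\tfrac{x+1}{x-1}|-|z|\,\big|$, and uses $|z|\le\tfrac{\rho+\rho^{-1}}{2}$ on the ellipse together with $|\tfrac{x+1}{x-1}|\ge c$ for $x\in\R$. Your pole-location argument (identifying the unique pole $z_0(x)=-\tfrac{x+1}{x-1}$ and computing the ellipse-to-pole distance) yields the identical $L$ but explains structurally why the bound is sharp: the worst case is the real vertex of the ellipse against the nearest pole. The paper's route is shorter; yours is more transparent.

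For the complexity, the paper works with the full quantity $L/(\rho^2-1)$ from the Gaussian-quadrature error, carries out an explicit algebraic reduction to $\frac{\rho M(M-1)^2}{(\rho^2-1)(4M-2)}\le\tfrac{3M^3}{16}$, and then splits into the cases $M\in[1,4]$ and $M\ge 4$ to control $\log\rho$ from below. You instead argue asymptotically: $\rho-1\ge 2/\sqrt{M}$ gives $1/\log\rho=O(\sqrt{M})$, and your identity $g(\rho_{\max})-g(\rho)=\tfrac12(\rho_{\max}-\rho)\tfrac{\rho\rho_{\max}-1}{\rho\rho_{\max}}$ together with $\rho_{\max}-\rho=\tfrac{2}{\sqrt{M}(M-1)}$ cleanly yields $L=\Theta(M^2)$, hence $\log L=O(\log M)$. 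The paper's approach produces explicit constants and handles all $M>1$ uniformly; your approach is slicker but purely asymptotic. One small caveat: your phrase ``$\rho$ stays bounded'' is only true for $M$ bounded away from $1$ (since $c\to\infty$ as $M\to1^+$), so your argument as written covers the large-$M$ regime; the paper's case split is what absorbs the small-$M$ range.
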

	\begin{proof}
		We observe that $|z|$ can be upper bounded by
		\[|z|=\left[ \frac{1}{4}\left(\rho+\rho^{-1}\right)^2\cos^2\theta+\frac{1}{4}\left(\rho-\rho^{-1}\right)^2\sin^2\theta \right]^{1/2} \leq \frac{\rho}{2}+\frac{\rho^{-1}}{2}.\]
		Then, we have
		\[\left|\phi(z,x)\right|=\left|\frac{x-1}{2+(z+1)(x-1)}\right|=\frac{1}{\left|\frac{x+1}{x-1}+z\right|}\leq\frac{1}{\left|\left|\frac{x+1}{x-1}\right|-|z|\right|}\leq\frac{1}{\frac{M+1}{M-1}-\frac{\rho}{2}-\frac{\rho^{-1}}{2}}:=L,\]
		where the first inequality is because of triangle inequality and the second one holds since $x\in \R$ and we choose 
		\[\rho= (M+1)/(M-1)+2/\sqrt{M}\in\left(1,\frac{M+1}{M-1}+\sqrt{\left(\frac{M+1}{M-1}\right)^2-1} \right)\]
		such that $|x+1|/|x-1|\geq(M+1)/(M-1)>|z|$. According to the choice of $\rho$ and $L$, we have
		\[\frac{L}{\rho^2-1}=\frac{1}{(\rho^2-1)(\frac{\rho^2-1}{2\rho}-\frac{2}{\sqrt{M}})}=\frac{2\rho}{(\rho^2-1)[(\rho-\frac{2}{\sqrt{M}})^2-\frac{4}{M}-1]}=\frac{\rho M(M-1)^2}{(\rho^2-1)(4M-2)}\leq \frac{\rho(M-1)^3}{16}\leq\frac{3M^3}{16}.\]
		
		We then split the proof into the following two cases:
		\begin{itemize}
			\item[Case 1. ] If $M\in [1,4]$, then $\rho\geq 7/3$ and $L\leq 7/2$. The result holds.
			
			\item[Case 2. ] If $M\geq4$, we have \[\frac{\log\left(\sqrt{64 L/15(\rho^2-1)\epsilon}\right)}{\log(\rho)}\leq \frac{\log\left(\sqrt{4M^3 /5\epsilon}\right)}{\log\left(1+2/\sqrt{M}\right)}\leq\sqrt{M}\log\left(\sqrt{4M^3 /5\epsilon}\right),\]
			where the first inequality is due to $L/(\rho^2-1)\leq3M^3/16$ and $t/(1+t)\leq\log(1+t)$ for any $t>0$ and the second one is due to $M\geq4$.
			Therefore, we can choose 
			$N=O(\sqrt{M}\log(\sqrt{M^3 /\epsilon})).$ 
		\end{itemize}	
		%
		Finally, the SOC representation of $\hat{H}_{\phi_1,N}$ follows from the fact that the hypograph of $\phi_1(t_k,x)$ is SOC representable for each $k\in [N]$. 
		\QEDA
	\end{proof}
	The result in Corollary \ref{cor_phi_1} holds only when $a=1$ in Example~\ref{emp_1}. Our numerical study 
	shows that letting $a=1$ performs nearly the best among all the testing instances. For general $a>0$, we are unable to derive closed-form $\rho$ and $L$ and thus leave it to the interested readers. 
	
	\subsubsection{Example 2}\label{sec_ex2}
	\begin{example}\label{emp_2}
		Suppose $\phi_2(t,x)=\frac{2^{s}(x^{1/2^{s}}-1)}{2+(t+1)(x^{1/2^{s}}-1)}$ for some positive parameter $a>0$ and positive integer $s\in \Ze_{++}$. Then we have $\int_0^1 \phi_2(t,x)dt=\log(x)$ and for any $t\in [0,1]$, the hypograph of $\phi_2(t,\cdot)$ is SOC representable, i.e.,
		\begin{align}\label{eq_phi_rep_2}
			&\hyp {\phi_2}(t):=\left\{(x,v)\in \R\times \Re: \begin{aligned}
				&r_i\geq 0, \forall i\in [s],\\
				&(2r_1,x-1,x+1)\in \L_3, \\
				&(2r_{i+1},r_{i}-1,r_{i}+1)\in \L_3, \forall i\in [s-1],\\
				&\gamma_{1}=2^s-(t+1)v\geq 0, \\
				&\gamma_{2}=r_s-1-2^{1-s} v \geq 0,\\
				&(v\sqrt{2^{3-s}(t+1)},\gamma_{1}-\gamma_{2},\gamma_{1}+\gamma_{2})\in \L_3
			\end{aligned}
			\right\}.
		\end{align}
		This generating function was studied in \cite{fawzi2019semidefinite} and was shown to be semidefinite conic representable. 
	\end{example}
Next, we study the number of variables and SOC constraints needed to achieve the $\epsilon-$approximation accuracy using Example \ref{emp_2}.

		\begin{corollary}\label{cor_phi_2} For the generating function in Example \ref{emp_2}, we can choose $\rho=(M^{1/2^{s}}+1)/(M^{1/2^{s}}-1)$, $s=N$, and
			\[L=\frac{2^{s}}{\frac{M^{1/2^{s}}+1}{M^{1/2^{s}}-1}-\frac{\rho}{2}-\frac{\rho^{-1}}{2}}.\]
			Thus, achieving the $\epsilon-$approximation accuracy requires the number of points $N=O(\log\log(M)+\sqrt{\log(M)+\log(1/\epsilon)})$. That is, there should be $O(\log\log(M)+\sqrt{\log(M)+\log(1/\epsilon)})$ number of variables and SOC constraints in the representation of set $\hat{H}_{\phi_2,N}$, which admits
			\begin{equation}
				\hat{H}_{\phi_2,N} = \left\{(x,\nu)\in \R\times\Re: \begin{aligned}
					&\sum_{k\in [N]}w_k v_k\geq \nu, r_i\geq 0, \forall i\in [s],\\
					&(2r_1,x-1,x+1)\in \L_3, \\
					&(2r_{i+1},r_{i}-1,r_{i}+1)\in \L_3, \forall i\in [s-1],\\ 
					&\gamma_{1k}=2^s-(t_k+1)v_k\geq 0, \forall k\in [N],\\
					&\gamma_{2k}=r_s-1-2^{1-s} v_k \geq 0, \forall k\in [N],\\
					&(v_k\sqrt{2^{3-s}(t_k+1)},\gamma_{1k}-\gamma_{2k},\gamma_{1k}+\gamma_{2k})\in \L_3, \forall k\in [N]
				\end{aligned}\right\}.
			\end{equation}
		\end{corollary}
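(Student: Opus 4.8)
The plan is to follow the proof of Corollary~\ref{cor_phi_1}, transferring every estimate to the rescaled variable $y=x^{1/2^s}$. Parametrizing the Bernstein ellipse as $z=\frac{\rho}{2}\exp(i\theta)+\frac{\rho^{-1}}{2}\exp(-i\theta)$, the same computation as in Corollary~\ref{cor_phi_1} gives $|z|\leq \frac{\rho}{2}+\frac{\rho^{-1}}{2}$. Rewriting the generating function as $\phi_2(z,x)=\frac{2^s}{\frac{y+1}{y-1}+z}$ with $y=x^{1/2^s}$ and applying the triangle inequality yields
\[|\phi_2(z,x)|\leq \frac{2^s}{\left|\frac{y+1}{y-1}\right|-|z|}.\]
Because $x\in\R=[1/M,M]$ forces $y\in[M^{-1/2^s},M^{1/2^s}]$, and because $y\mapsto\left|\frac{y+1}{y-1}\right|$ is monotone on each side of $y=1$ and attains its minimum over this interval at the two endpoints with common value $\frac{M^{1/2^s}+1}{M^{1/2^s}-1}$, the denominator above is smallest there.

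Next I would check that the stated $\rho=\frac{M^{1/2^s}+1}{M^{1/2^s}-1}$ is admissible. Since $M>1$ gives $M^{1/2^s}>1$ and hence $\rho>1$, substituting this $\rho$ into the denominator produces $\rho-\frac{\rho}{2}-\frac{\rho^{-1}}{2}=\frac{\rho-\rho^{-1}}{2}=\frac{\rho^2-1}{2\rho}>0$, so the bound is finite and equals $L=\frac{2^{s+1}\rho}{\rho^2-1}$, exactly as stated. With $|\phi_2(z,x)|\leq L$ holding uniformly on the ellipse, Theorem~\ref{prop_approx_log} applies and bounds the quadrature error by $\frac{64L\rho^{-2N}}{15(\rho^2-1)}=\frac{64}{15}\cdot\frac{2^{s+1}\rho}{(\rho^2-1)^2}\rho^{-2N}$.

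The substantive step is to set $s=N$ and solve $\frac{64L\rho^{-2N}}{15(\rho^2-1)}\leq\epsilon$ for $N$. The idea is to make the dependence of $\rho$ on $N$ explicit: writing $\tau=\frac{\ln M}{2^N}$ one has $\rho=\coth(\tau/2)$, so that $\rho^{-2N}=\tanh^{2N}(\tau/2)$ and $\frac{\rho}{(\rho^2-1)^2}=\cosh(\tau/2)\sinh^3(\tau/2)$, giving the closed form $\frac{64}{15}2^{N+1}\cosh(\tau/2)\sinh^3(\tau/2)\tanh^{2N}(\tau/2)$ for the error bound. Once $N\gtrsim\log_2\ln M$ the argument $\tau$ is small, the elementary bounds $\tanh(\tau/2)\leq\tau/2$, $\sinh(\tau/2)\leq\frac{\tau}{2}\cosh(\tau/2)$ and $\cosh(\tau/2)\leq e^{\tau/2}$ apply, and substituting $\tau=\ln M/2^N$ shows the error is at most $(\ln M)^{2N+3}2^{-2N^2-O(N)}$ up to an absolute constant. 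Taking logarithms turns the target inequality into a quadratic in $N$ of the form $2N^2-2N\log_2\ln M\gtrsim\log_2(1/\epsilon)+O(1)$; solving it yields the additive offset $O(\log\log M)$ needed just to push $\rho$ above $1$, together with a square-root accuracy term, so that $N=O(\log\log M+\sqrt{\log(1/\epsilon)})$, which is dominated by the stated $O(\log\log M+\sqrt{\log M+\log(1/\epsilon)})$.

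The main obstacle is precisely this last step: because the choice $s=N$ couples the square-root depth to the number of quadrature points, $\rho$ is no longer bounded but grows with $N$, so the plain geometric-decay reading of Theorem~\ref{prop_approx_log} must be replaced by the hyperbolic-function estimate above, and one must justify the small-$\tau$ expansion and separately treat the boundary regime $2^N\approx\ln M$ (as in the case split on $M$ in Corollary~\ref{cor_phi_1}) before reading off the quadratic-in-$N$ behavior. Finally, the SOC representation of $\hat{H}_{\phi_2,N}$ follows, exactly as in Corollary~\ref{cor_phi_1}, from the SOC representability of each $\hyp{\phi_2}(t_k)$ established in Example~\ref{emp_2}, where the nested Lorentz cones $(2r_{i+1},r_i-1,r_i+1)\in\L_3$ realize the repeated square roots defining $x^{1/2^s}$.
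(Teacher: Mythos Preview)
Your proposal is correct and follows essentially the same route as the paper's own proof: derive $L$ from the triangle inequality with the same choice $\rho=(M^{1/2^s}+1)/(M^{1/2^s}-1)$, set $s=N$, use the identity $\rho^{-1}=\tanh\bigl(\tfrac{1}{2}\log M^{1/2^N}\bigr)\leq \log(M)/2^{N+1}$ to estimate $\rho^{-2N}$, and then solve the resulting quadratic inequality in $N$.

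Two cosmetic remarks. First, your systematic hyperbolic parametrization $\rho=\coth(\tau/2)$ with $\tau=\ln M/2^N$ is exactly what the paper does implicitly; the paper just invokes the $\tanh$ bound at the single step where it is needed and elsewhere bounds $M^{2/2^N}\leq M$ crudely, which is why the paper picks up the extra $\log M$ under the square root that you do not. Your sharper estimate $N=O(\log\log M+\sqrt{\log(1/\epsilon)})$ is in fact correct and, as you note, dominated by the stated bound. Second, your worry about a boundary regime $2^N\approx\ln M$ and needing a case split (as in Corollary~\ref{cor_phi_1}) is unnecessary here: the inequality $\tanh(u)\leq u$ holds for all $u\geq 0$, so the paper applies it directly without any case analysis, and you can do the same.
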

		\begin{proof}
			We choose $L$ in the following way
			\[\left|\phi(z,x)\right|=\left|\frac{2^{s}(x^{1/2^{s}}-1)}{2+(z+1)(x^{1/2^{s}}-1)}\right|=\frac{2^{s}}{\left|\frac{x^{1/2^{s}}+1}{x^{1/2^{s}}-1}+z\right|}\leq\frac{2^{s}}{\left|\left|\frac{x^{1/2^{s}}+1}{x^{1/2^{s}}-1}\right|-|z|\right|}
			\leq\frac{2^{s}}{\frac{M^{1/2^{s}}+1}{M^{1/2^{s}}-1}-\frac{\rho}{2}-\frac{\rho^{-1}}{2}}:=L,\]
			where the first inequality is because of triangle inequality and the second one holds by choosing
			\[\rho=(M^{1/2^{s}}+1)/(M^{1/2^{s}}-1)\in\left(1,\frac{M^{1/2^{s}}+1}{M^{1/2^{s}}-1}+\sqrt{\left(\frac{M^{1/2^{s}}+1}{M^{1/2^{s}}-1}\right)^2-1} \right)\]
			such that 
			\[\frac{|x^{1/2^{s}}+1|}{|x^{1/2^{s}}-1|}\geq\frac{M^{1/2^{s}}+1}{M^{1/2^{s}}-1}>|z|. \]
			Then we have
			\[\frac{L}{\rho^2-1}=\frac{2^{s+1}\rho}{(\rho^2-1)^2}=\frac{2^{s-3}(M^{1/2^{s}}+1)(M^{1/2^{s}}-1)^3}{M^{2/2^{s}}}\leq 2^{s-3}M^{2/2^{s}}.\]\\
			Letting $s=N$, we arrive at
			\[\frac{64 L\rho^{-2N}}{15(\rho^2-1)} \leq \frac{2^{N+3}M^{2/2^{N}}\rho^{-2N}}{15}  \leq \frac{2^{-2N^2-N+3}M\log^{2N}(M)}{15} 
			\leq \epsilon,\]
			where the second inequality is due to  
			\[\frac{M^{1/2^{N}}-1}{M^{1/2^{N}}+1}=\tanh\left(\frac{1}{2}\log\left(M^{1/2^{N}}\right)\right)\leq \frac{\log(M)}{2^{N+1}}. \] 
			Solving the following inequality
			\[2\log(2)N^2+(\log(2)-2\log\log(M))N+\log(15\epsilon/8M)\geq0,\]
			we have
			\[N\geq\left[2\log\log(M)-\log(2)+\sqrt{[\log(2)-2\log\log(M)]^2-8\log(2)\log(15\epsilon/8M)}\right]/4\log(2).\]
			Thus, $N=s=O(\log\log(M)+\sqrt{\log(M)+\log(1/\epsilon)})$.

			%
			
			Finally, the SOC representation of $\hat{H}_{\phi_2,N}$ follows from the fact that the hypograph of $\phi_2(t_k,x)$ is SOC representable for each $k\in [N]$. \QEDA
		\end{proof}
		The generating function $\phi_2(t,x)$ has also been studied in \cite{fawzi2019semidefinite}. We differentiate from their work in the following three aspects: (i) We prove Corollary \ref{cor_phi_2} from a different angle and significantly simplify the proof; (ii) We focus on the SOC approximation, while \cite{fawzi2019semidefinite} focus on semidefinite conic approximation; and (iii) The proof technique here can be applied to other generating functions. 
		
	\subsubsection{Example 3}\label{sec_ex3}
	\begin{example}\label{emp_3}
		Suppose $\phi_3(t,x)=\frac{(x/\hat{x}-1)}{2+(t+1)(x/\hat{x}-1)}$ for $\hat{x}\in \R$. Then we have $\int_{-1}^1 \phi_3(t,x)dt=\log(x)-\log(\hat{x})$ and for any $t\in [-1,1]$, the hypograph of $\phi_3(t,\cdot)$ is SOC representable, i.e.,
		\begin{align}\label{eq_phi_rep_3}
			&\hyp {\phi_3}(t):=\left\{(x,v)\in \R\times \Re: \begin{aligned}
				&r_1=1-(t+1)v\geq 0, \\
				&r_2=(x/\hat{x}-1)-2v\geq 0,\\
				&(v\sqrt{8(t+1)},r_1-r_2,r_1+r_2)\in \L_3
			\end{aligned}
			\right\}.
		\end{align}
		%
		%
		%
		%
	In this example, if we obtain a near-optimal solution $\hat{x}$, we can scale the variable $x$ by $\hat{x}$. The scaling method enables us to focus on the neighborhood around the solution $\hat{x}$ and meanwhile maintains the SOC representability of the hypograph of the scaled generating function.
	\end{example}	
Next, we study the number of variables and SOC constraints needed to achieve the $\epsilon-$approximation accuracy using Example \ref{emp_3}.

	\begin{corollary}\label{cor_phi_3} For the generating function in Example \ref{emp_3}, suppose $\hat{x}$ is a near-optimal solution such that $1-\delta\leq x/\hat{x} \leq 1+\delta$ with $\delta\in(0,1)$ and we choose $\rho=(2-\delta)/\delta$ and $L=((2-\delta)/\delta-\rho/2-\rho^{-1}/2)^{-1}$.
		Then, achieving the $\epsilon-$approximation accuracy requires the number of points $N=O(\log(1-\delta)/\log(\delta)+\log(\epsilon)/\log(\delta))$ such that
		$\hyp {\log}(-\epsilon)\subseteq  \hat{H}_{\phi_3,N} \subseteq \hyp {\log}(\epsilon)$. That is, there should be $O(\log(1-\delta)/\log(\delta)+\log(\epsilon)/\log(\delta))$ number of variables and SOC constraints in the representation of set $\hat{H}_{\phi_3,N}$, which admits
		\begin{equation}
			\hat{H}_{\phi_3,N} = \left\{(x,\nu)\in \R\times\Re: \begin{aligned}
				&\sum_{k\in [N]}w_k v_k\geq \nu-\log(\hat{x}),\\
				&r_{1k}=1-(t_k+1)v_k\geq 0, \forall k\in [N],\\
				&r_{2k}=(x/\hat{x}-1)-2 v_k \geq 0, \forall k\in [N],\\
				&(v_k\sqrt{8(t_k+1)},r_{1k}-r_{2k},r_{1k}+r_{2k})\in \L_3, \forall k\in [N]
			\end{aligned}\right\}.
		\end{equation}
	\end{corollary}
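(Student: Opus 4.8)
The plan is to follow the same template as the proofs of Corollary~\ref{cor_phi_1} and Corollary~\ref{cor_phi_2}: reduce the statement to verifying the two hypotheses ($\rho$ and $L$) of Theorem~\ref{prop_approx_log} for the generating function $\phi_3$, and then read off the required number of quadrature points $N$ from the geometric error bound. The identity $\int_{-1}^1 \phi_3(t,x)\,dt = \log(x)-\log(\hat{x})$ and the SOC representability \eqref{eq_phi_rep_3} of $\hyp\phi_3(t)$ are already supplied by Example~\ref{emp_3}, so the work is entirely in bounding $L$ and extracting the complexity.

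First I would bound $|\phi_3(z,x)|$ on the Bernstein ellipse $z=\frac{\rho}{2}\exp(i\theta)+\frac{\rho^{-1}}{2}\exp(-i\theta)$. Writing $y=x/\hat{x}\in[1-\delta,1+\delta]$ and using $|z|\leq \rho/2+\rho^{-1}/2$ together with the triangle inequality exactly as in the two previous corollaries,
\[
|\phi_3(z,x)| = \frac{1}{\left|\frac{y+1}{y-1}+z\right|} \leq \frac{1}{\left|\left|\frac{y+1}{y-1}\right|-|z|\right|}.
\]
The crucial elementary step is that over $y\in[1-\delta,1+\delta]\setminus\{1\}$ the quantity $\bigl|\frac{y+1}{y-1}\bigr|$ attains its minimum $\frac{2-\delta}{\delta}$ at $y=1-\delta$ (it is decreasing in $y$ on $(1,\infty)$ and increasing on $(-\infty,1)$, and $\frac{2-\delta}{\delta}<\frac{2+\delta}{\delta}$). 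This gives the stated $L=(\frac{2-\delta}{\delta}-\rho/2-\rho^{-1}/2)^{-1}$, valid as long as $\rho$ keeps $\frac{2-\delta}{\delta}>\rho/2+\rho^{-1}/2$, i.e. $1<\rho<\frac{2-\delta}{\delta}+\sqrt{(\frac{2-\delta}{\delta})^2-1}$; the choice $\rho=\frac{2-\delta}{\delta}$ lies in this range because $\delta\in(0,1)$ forces $\frac{2-\delta}{\delta}>1$.

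Next I would substitute $\rho=\frac{2-\delta}{\delta}$ into $L$. A short computation collapses $L$ to the convenient closed form $L=\frac{2\rho}{\rho^2-1}$, so the bound of Theorem~\ref{prop_approx_log} becomes $\frac{64 L\rho^{-2N}}{15(\rho^2-1)}=\frac{128\,\rho^{1-2N}}{15(\rho^2-1)^2}$. Using $\rho^2-1=\frac{4(1-\delta)}{\delta^2}$, I would require this to be at most $\epsilon$, take logarithms, and solve for $N$, obtaining
\[
N \geq \frac{1}{2} + \frac{\log(8/15)-2\log(1-\delta)+4\log\delta-\log\epsilon}{2\log\rho}.
\]

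The one place needing care — the main (and only mildly nonroutine) obstacle — is controlling the denominator $\log\rho$ uniformly in $\delta$, since $\rho\to 1$ as $\delta\to 1$. Here the clean observation is that $2-\delta\geq 1$ for $\delta\leq 1$, so $\rho=\frac{2-\delta}{\delta}\geq \frac{1}{\delta}$ and hence $\log\rho\geq -\log\delta>0$. Dropping the non-positive terms $\log(8/15)$ and $4\log\delta$ from the numerator (they only make the required $N$ smaller) and applying this lower bound on $\log\rho$ yields $N=O(\log(1-\delta)/\log\delta+\log\epsilon/\log\delta)$ with no case split, matching the claim (note $\log\delta$, $\log(1-\delta)$, and $\log\epsilon$ are all negative, so each displayed ratio is positive). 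Finally, the explicit description of $\hat{H}_{\phi_3,N}$ is immediate by instantiating \eqref{eq_phi_rep_3} at each quadrature node $t_k$ and adjoining the quadrature inequality $\sum_{k\in[N]}w_k v_k\geq \nu-\log(\hat{x})$, exactly as in the previous two corollaries.
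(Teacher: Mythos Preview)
Your proposal is correct and follows essentially the same approach as the paper: bound $|\phi_3(z,x)|$ via the triangle inequality and $\min_{y\in[1-\delta,1+\delta]}|(y+1)/(y-1)|=(2-\delta)/\delta$, substitute $\rho=(2-\delta)/\delta$ to obtain $L=2\rho/(\rho^2-1)$, set the Gaussian-quadrature error $\tfrac{64L\rho^{-2N}}{15(\rho^2-1)}\le\epsilon$, and solve for $N$ (your displayed inequality for $N$ is algebraically identical to the paper's). Your extra step of bounding $\log\rho\ge -\log\delta$ to pass from $\log\rho$ to $\log\delta$ in the denominator is a slightly more explicit justification of the final $O(\cdot)$ claim than the paper gives; the parenthetical ``they only make the required $N$ smaller'' is phrased backwards (dropping those negative terms \emph{increases} the right-hand side), but the direction you use is the correct one for the $O(\cdot)$ upper bound.
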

	\begin{proof}
		We derive $L$ for the generating function in Example \ref{emp_3} as follows
		\[\left|\phi(z,x)\right|=\left|\frac{(x/\hat{x}-1)}{2+(z+1)(x/\hat{x}-1)}\right|=\frac{1}{\left|\frac{x/\hat{x}+1}{x/\hat{x}-1}+z\right|}\leq\frac{1}{\left|\left|\frac{x/\hat{x}+1}{x/\hat{x}-1}\right|-|z|\right|}\leq \frac{1}{\frac{2-\delta}{\delta}-\frac{\rho}{2}-\frac{\rho^{-1}}{2}}=L,\]
		where the first inequality is because of triangle inequality and the second one holds by choosing
		\[\rho=(2-\delta)/\delta\in\left(1,\frac{2-\delta}{\delta}+\sqrt{\left(\frac{2-\delta}{\delta}\right)^2-1} \right)\]
		such that 
		\[\frac{\left|x/\hat{x}+1\right|}{\left|x/\hat{x}-1\right|}\geq\frac{2-\delta}{\delta}>|z|.\]
		Then, we get
		\[\frac{L}{\rho^2-1}=\frac{2\rho}{(\rho^2-1)^2}=\frac{\delta^3(2-\delta)}{8(1-\delta)^2}.\]
		Now let
		\[\frac{64 L\rho^{-2N}}{15(\rho^2-1)}=\frac{8\delta^{2N+3}}{15(2-\delta)^{2N-1}(1-\delta)^2}\leq \epsilon,\]
		and we have
		\[N\geq\frac{\log(\delta^3(2-\delta))-\log(15(1-\delta)^2 \epsilon/8)}{2\log((2-\delta)/\delta)}.\]
		Therefore, we can choose $N=O(\log(1-\delta)/\log(\delta)+\log(\epsilon)/\log(\delta))$. 

		%
		The SOC representation of $\hat{H}_{\phi_3,N}$ follows from the fact that the hypograph of $\phi_3(t_k,x)$ is SOC representable for each $k\in [N]$. 
		\QEDA	
	\end{proof}
	For Corollary \ref{cor_phi_3}, we remark that (i) if $\delta$ is close to 1 (i.e., the solution quality of $\hat{x}$ is quite low), then $N=O(\log(1-\delta)/\log(\delta))$, implying that the analysis may not be tight; 
	(ii) if $\delta\rightarrow\epsilon$ (i.e., the solution quality is good), then $N=O(\log(\epsilon)/\log(\delta))$; particularly, if $\delta=\epsilon^{1/t}$ for some positive $t\geq 1$, then we only need $N=O(t)$ points to achieve the $\epsilon$-approximation accuracy. This result, together with Figure \ref{taylor_approx} may explain why Example \ref{emp_3} works so well in our numerical study.

	\subsubsection{Comparisons of Example \ref{emp_1}-Example \ref{emp_3}}
	
	\begin{wrapfigure}{r}{0.50\textwidth}
	\vspace{-30pt}
	\begin{center}
		\includegraphics[width=0.48\textwidth]{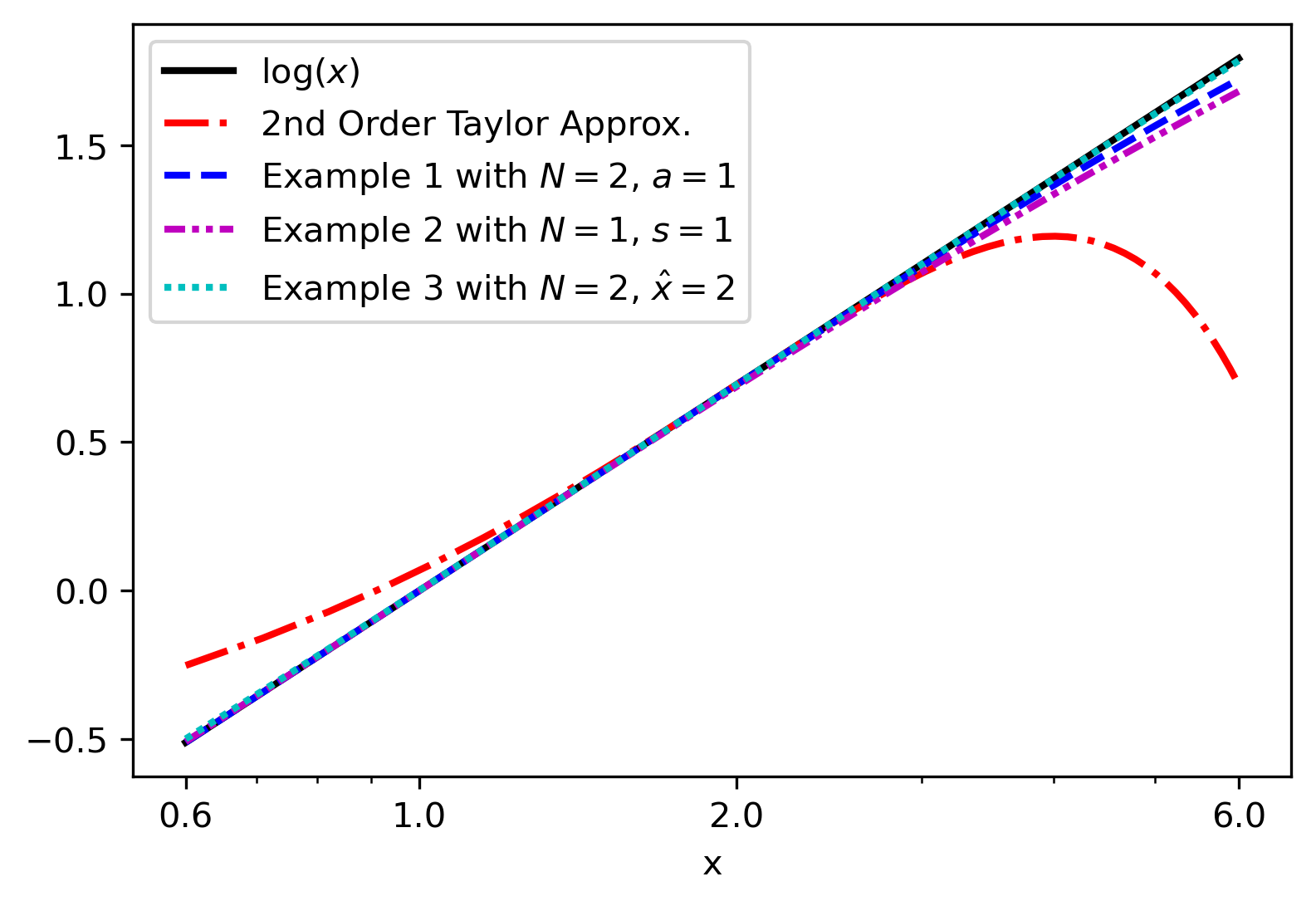}
		\vspace{-20pt}
	\end{center}
	\tiny\caption{A Comparison of the Second-order Taylor Approximation and Example \ref{emp_1}-Example \ref{emp_3} using Two SOC Constraints to Approximate the Logarithm Function in the Domain $[0.6,6.0]$}\label{taylor_approx}
	\vspace{-60pt}
	\end{wrapfigure}

	The numerical comparison of Example \ref{emp_1}-Example \ref{emp_3} versus second-order Taylor approximation to approximate the logarithm function can be found in Figure \ref{taylor_approx}, where we let $\hat{x}=2$ for Example \ref{emp_3}. All three examples have better approximation performance than the second-order Taylor approximation. 
	By applying the scaling method, Example \ref{emp_3} outperforms the other methods, which implies even two-point Gaussian quadrature can be a good approximation of $\log (x)$ as $\log(x)\approx \log(\hat{x})+\phi_3(\sqrt{1/3},x)+\phi_3(-\sqrt{1/3},x)$. In practice, we can run heuristics to obtain a good-quality $\hat{x}$.

	\subsection{Approximating the Exponential Cone}\label{sec_approx_ec_exmps}
	Since the exponential cone in the form of \eqref{eq_ec1} can be viewed as the hypograph of perspective of the logarithm, the SOC approximations in the previous subsection can be directly applied to the exponential cone \eqref{eq_ec1}. This result is summarized below:
	\begin{theorem}\label{prop_approx_exp_cone}
		Suppose that there exists some $\rho>1$ such that
		\[\sup_{x\in [1/M^2,M^2], z\in \Ce}\left\{|\phi(z,x)|: z=\frac{\rho}{2}\exp(i\theta)+
		\frac{\rho^{-1}}{2}\exp(-i\theta):\theta\in [0,2\pi]
		\right\} \leq L<\infty.\]
		Then if the number of points $N=O(\log_{\rho}(L/\epsilon))$ and set
		%
		\[\hat{K}_{\phi,N}^{sc}:=\left\{\bm{x}\in \R^{2}\times \Re: \sum_{k\in [N]}w_k v_k\geq x_3+\textrm{const}\cdot x_2, (x_1/x_2,v_k/x_2)\in \hyp {\phi}(t_k),\forall k\in [N]\right\},\]
		the following approximation result holds:
		$K_{\exp}(-\epsilon)\subseteq \hat{K}_{\phi,N}^{sc}\subseteq K_{\exp}(\epsilon).$
	\end{theorem}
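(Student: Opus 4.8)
The plan is to recognize $\hat{K}_{\phi,N}^{sc}$ as the \emph{perspective} (homogenization) of the set $\hat{H}_{\phi,N}$ from \eqref{eq_log_approx} and then transport the inclusion $\hyp{\log}(-\epsilon)\subseteq \hat{H}_{\phi,N}\subseteq \hyp{\log}(\epsilon)$ of Theorem~\ref{prop_approx_log} through the ratio map $\bm{x}\mapsto (x_1/x_2, x_3/x_2)$. The starting observation is that, by \eqref{eq_ec1},
\[K_{\exp}(\alpha)=\left\{\bm{x}\in\R^2\times\Re:(x_1/x_2,x_3/x_2)\in\hyp{\log}(\alpha)\right\},\]
and that the coordinate $x_2$ satisfies $x_2\in[1/M,M]$, hence $x_2\geq 1/M>0$; this guarantees that the ratio map is well defined on the entire domain and that $x_1/x_2$ ranges over $[1/M^2,M^2]$ because $x_1,x_2\in[1/M,M]$. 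This is precisely why the hypothesis of the present theorem takes the supremum over $x\in[1/M^2,M^2]$ rather than over $\R=[1/M,M]$: I must apply Theorem~\ref{prop_approx_log} with the logarithm's domain enlarged to $[1/M^2,M^2]$, which with the stated $\rho$ and $L$ yields $N=O(\log_\rho(L/\epsilon))$ quadrature points together with $\hyp{\log}(-\epsilon)\subseteq \hat{H}_{\phi,N}\subseteq \hyp{\log}(\epsilon)$ on that interval.

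Next I would verify the perspective identity
\[\hat{K}_{\phi,N}^{sc}=\left\{\bm{x}\in\R^2\times\Re:(x_1/x_2,x_3/x_2)\in \hat{H}_{\phi,N}\right\}.\]
To see this, fix $\bm{x}$ with $x_2>0$ and write out membership of $(x_1/x_2,x_3/x_2)$ in $\hat{H}_{\phi,N}$: there exist auxiliary values $\bar v_k$ with $\sum_{k\in[N]}w_k\bar v_k\geq x_3/x_2+\textrm{const}$ and $(x_1/x_2,\bar v_k)\in\hyp{\phi}(t_k)$ for all $k$. Multiplying the scalar inequality by $x_2>0$ and substituting $v_k:=x_2\bar v_k$ turns it into $\sum_{k\in[N]}w_k v_k\geq x_3+\textrm{const}\cdot x_2$ and $(x_1/x_2,v_k/x_2)\in\hyp{\phi}(t_k)$, which is exactly the description of $\hat{K}_{\phi,N}^{sc}$; the substitution is a bijection for each fixed $x_2>0$, so the two sets coincide.

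With these two facts the conclusion follows in one stroke: since the ratio map $\bm{x}\mapsto(x_1/x_2,x_3/x_2)$ is a fixed pointwise membership test, any inclusion $S_1\subseteq S_2$ in the $(y,\nu)$-space lifts to $\{\bm{x}:(x_1/x_2,x_3/x_2)\in S_1\}\subseteq\{\bm{x}:(x_1/x_2,x_3/x_2)\in S_2\}$. Applying this to $\hyp{\log}(-\epsilon)\subseteq\hat{H}_{\phi,N}\subseteq\hyp{\log}(\epsilon)$ gives $K_{\exp}(-\epsilon)\subseteq \hat{K}_{\phi,N}^{sc}\subseteq K_{\exp}(\epsilon)$, as desired. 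Finally, I would remark that $\hat{K}_{\phi,N}^{sc}$ is genuinely a second-order conic set: clearing the denominator $x_2$ in each constraint $(x_1/x_2,v_k/x_2)\in\hyp{\phi}(t_k)$ leaves the linear pieces linear in $(x_1,x_2,v_k)$, and by the positive homogeneity of the Lorentz cone $\L_3$, multiplying every entry of a conic constraint by $x_2>0$ preserves membership, so the representation stays SOC.

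The main obstacle is the bookkeeping of the perspective transformation on the auxiliary variables: one must scale each $v_k$ by $x_2$ so that the single aggregated inequality correctly picks up the factor $\textrm{const}\cdot x_2$ while each hypograph constraint is homogenized consistently, and one must confirm that the Lorentz-cone constraints survive this homogenization via scale invariance. The only other point requiring care is the domain enlargement from $[1/M,M]$ to $[1/M^2,M^2]$, which alters the constants $\rho$ and $L$ that feed into Theorem~\ref{prop_approx_log} but not the overall $O(\log_\rho(L/\epsilon))$ count.
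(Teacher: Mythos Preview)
Your proposal is correct and follows essentially the same route as the paper: both arguments identify $\hat{K}_{\phi,N}^{sc}$ with the pullback of $\hat{H}_{\phi,N}$ under the ratio map $\bm{x}\mapsto(x_1/x_2,x_3/x_2)$, invoke Theorem~\ref{prop_approx_log} on the enlarged domain, and perform the substitution $v_k\mapsto x_2 v_k$ to pass between the two descriptions. If anything, your write-up is more explicit than the paper's in justifying the domain enlargement to $[1/M^2,M^2]$ and in explaining why the Lorentz-cone constraints survive homogenization.
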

	\begin{proof}
		According to \eqref{eq_log_approx}, for any $\alpha$, we have
		\[ K_{\exp}(\alpha)=\left\{\bm{x}\in \R^2\times \Re: (x_1/x_2,x_3/x_2)\in \hyp {\log}(\alpha)\right\}\]
		and 
		\[\hat{K}_{\phi,N}^{sc}=\left\{\bm{x}\in \R^{2}\times \Re:(x_1/x_2,x_3/x_2)\in \hat{H}_{\phi,N}\right\}.\]
		Thus, according to Theorem \ref{prop_approx_log}, for any $\epsilon >0$, letting $N=O(\log_{\rho}(L/\epsilon))$, we have $K_{\exp}(-\epsilon)\subseteq  \hat{K}_{\phi,N}^{sc} \subseteq K_{\exp}(\epsilon)$ being equivalent to $\hyp {\log}(-\epsilon)\subseteq  \hat{H}_{\phi,N} \subseteq \hyp {\log}(\epsilon)$.

		%
		
		For the SOC approximation, we recall that
		\[\hat{H}_{\phi,N}=\left\{(x,\nu)\in \R\times\Re: \sum_{k\in [N]}w_k v_k\geq \nu+\textrm{const}, (x,v_k)\in \hyp {\phi}(t_k),\forall k\in [N]\right\}.\]
		Hence, we have
		\[\hat{K}_{\phi,N}^{sc}=\left\{\bm{x}\in \R^{2}\times \Re:\sum_{k\in [N]}w_k v_k\geq x_3/x_2+\textrm{const}, (x_1/x_2,v_k)\in \hyp {\phi}(t_k),\forall k\in [N]\right\}.\]
		Letting $v_k:=v_kx_2$, we arrive at the desired formulation. Finally, since 
		set $ \hyp {\phi}(t_k)$ is SOC representable, according to \cite{ben2001lectures}, set $\hat{K}_{\phi,N}^{sc}$ is also SOC representable.
		\QEDA
	\end{proof}

	We conclude this subsection by making the following remarks about Example \ref{emp_1}-Example \ref{emp_3}.
	\begin{subequations}
		\begin{itemize}
			\item For the generating function in Example \ref{emp_1}, if there are $O(M\log(M^3/\sqrt{\epsilon}))$ number of variables and SOC constraints in the representation of set $\hat{K}_{\phi_1,N}^{sc}$, then we have $K_{\exp}(-\epsilon)\subseteq \hat{K}_{\phi_1,N}^{sc}\subseteq K_{\exp}(\epsilon)$, where
			\begin{equation}
				\hat{K}_{\phi_1,N}^{sc} =\left\{\bm{x}\in \R^{2}\times \Re: \begin{aligned}
					&\sum_{k\in [N]}w_k v_k\geq x_3,\\
					&r_{1k}=ax_2-(t_k+1)v_k\geq 0, \forall k\in [N],\\
					&r_{2k}=a(t_k+1)^{a-1}(x_1-x_2)-2^a v_k \geq 0, \forall k\in [N],\\
					&(v_k\sqrt{2^{a+2}(t_k+1)},r_{1k}-r_{2k},r_{1k}+r_{2k})\in \L_3, \forall k\in [N]
				\end{aligned}\right\};
			\end{equation}
			%
			%
			\item For the generating function in Example \ref{emp_2}, if there are $O(\log\log(M)+\sqrt{\log(M)+\log(1/\epsilon)})$ number of variables and SOC constraints in the representation of set $\hat{K}_{\phi_2,N}^{sc}$, 
			then we have $K_{\exp}(-\epsilon)\subseteq \hat{K}_{\phi_2,N}^{sc}\subseteq K_{\exp}(\epsilon)$, where
			\begin{equation}
				\hat{K}_{\phi_2,N}^{sc} =\left\{\bm{x}\in \R^{2}\times \Re: \begin{aligned}
					&\sum_{k\in [N]}w_k v_k\geq x_3,\\
					&r_i\geq 0, \forall i\in [s],\\
					&(2r_1,x_1-x_2,x_1+x_2)\in \L_3, \\
					&(2r_{i+1},r_{i}-x_2,r_{i}+x_2)\in \L_3, \forall i\in [s-1],\\
					&\gamma_{1k}=2^sx_2-(t_k+1)v_k\geq 0, \forall k\in [N],\\
					&\gamma_{2k}=r_s-x_2-2^{1-s} v_k \geq 0, \forall k\in [N],\\
					&(v_k\sqrt{2^{3-s}(t_k+1)},\gamma_{1k}-\gamma_{2k},\gamma_{1k}+\gamma_{2k})\in \L_3, \forall k\in [N]
				\end{aligned}\right\};
			\end{equation}
			%
			%
			
			and
			\item For the generating function in Example \ref{emp_3}, if $\hat{\bm x}$ is a near-optimal solution such that $1-\delta\leq (x_1/x_2)/(\hat{x}_1/\hat{x}_2) \leq 1+\delta$ with $\delta\in(0,1)$ and there are $O(\log(1-\delta)/\log(\delta)+\log(\epsilon)/\log(\delta))$ number of variables and SOC constraints in the representation of set $\hat{K}_{\phi_3,N}^{sc}$, then we have $K_{\exp}(-\epsilon)\subseteq \hat{K}_{\phi_3,N}^{sc}\subseteq K_{\exp}(\epsilon)$, where
			\begin{equation}
				\hat{K}_{\phi_3,N}^{sc} = \left\{\bm{x}\in \R^{2}\times \Re: \begin{aligned}
					&\sum_{k\in [N]}w_k v_k\geq x_3/\hat{x}_2-(x_2/\hat{x}_2)\log(\hat{x}_1/\hat{x}_2),\\
					&r_{1k}=x_2/\hat{x}_2-(t_k+1)v_k\geq 0, \forall k\in [N],\\
					&r_{2k}=(x_1/\hat{x}_1-x_2/\hat{x}_2)-2v_k \geq 0, \forall k\in [N],\\
					&(v_k\sqrt{8(t_k+1)},r_{1k}-r_{2k},r_{1k}+r_{2k})\in \L_3, \forall k\in [N]
				\end{aligned}\right\}.
			\end{equation}   
			%
			%
		\end{itemize}
	\end{subequations}
	
	\section{Approximations Using the Exponential Form \eqref{eq_ec2}}\label{sec_exp}
	
	In this section, we extend the analysis of the approximation result for the exponential function to the exponential cone \eqref{eq_ec2}. We also show that the even-order Taylor approximation of the exponential function is sum-of-squares representable and disprove an SOC representation proposed by \cite{ben2001lectures}. Following the spirit of Example \ref{emp_3} in the previous section, we derive a new shifting approximation scheme using an approximate solution, which overcomes the numerical issues caused by other approximation schemes studied in this section.
	\subsection{Approximating the Exponential Function and the Exponential Cone}\label{sec_exp_lim}
	Given the exponential function $\exp(x)$, it is well known that
	$\lim_{n\rightarrow \infty}(1+x/n)^n =\exp(x)$
	for any $x\in\Re$. This motivates us to approximate $\exp(x)$ by $(1+2^{-N}x)^{2^N} $ for some positive integer $N
	\in \Ze_{++}$. This approximation scheme has been studied in proposition 2.3.7 \citep{ben2001lectures}. However, they did not study the approximation of the exponential cone. This paper fills this gap. We first show the approximation result of the exponential function. 
	\begin{proposition}\label{prop_approx_exp}
		Suppose that $x\in [-\hat{L},\hat{L}]$ for some positive parameter $L\in \Re_+$ and 
		for any $\epsilon\in (0,\hat{L})$, let $N=O(\log(\hat{L})+\log(1/\epsilon))$. 
		Then we have the following approximation result
		\[ \exp(x-\epsilon)\leq \psi_N(x)=(1+2^{-N}x)^{2^N} \leq \exp(x).\]
		Particularly, the epigraph of $\psi_N(x)$ is SOC representable, i.e.,
		\[\epi\psi_N:=
		\left\{(x,v)\in \Re\times\Re: \begin{aligned}
		&r_k\geq 0, \forall k\in [N-1], \\
		&(2r_1,v-1,v+1)\in \L_3, \\
		&(2r_{k+1},r_{k}-1,r_{k}+1)\in \L_3, \forall k\in [N-1],\\
		&r_N=1+2^{-N}x\end{aligned}\right\}.
		\]
	\end{proposition}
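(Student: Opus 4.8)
The plan is to separate the statement into two independent claims: the two-sided approximation bounds $\exp(x-\epsilon)\le\psi_N(x)\le\exp(x)$, and the second-order conic representability of $\epi\psi_N$. Throughout I would keep $N$ large enough that $2^{-N}\widehat{L}\le 1/2$ (hence $N\ge 1+\log_2\widehat{L}$), which simultaneously guarantees $1+2^{-N}x>0$ on $[-\widehat{L},\widehat{L}]$ and places the argument $y:=2^{-N}x$ into the regime $|y|\le 1/2$ needed for the error estimate.

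For the upper bound I would invoke the elementary inequality $1+s\le\exp(s)$ with $s=2^{-N}x$. Since $1+2^{-N}x>0$ by the choice of $N$, both sides are positive and I may raise them to the power $2^N$ without reversing the inequality, giving $\psi_N(x)=(1+2^{-N}x)^{2^N}\le(\exp(2^{-N}x))^{2^N}=\exp(x)$. For the lower bound I would take logarithms, so that it suffices to show $x-2^N\log(1+2^{-N}x)\le\epsilon$. Writing $y=2^{-N}x$, this reads $2^N\bigl(y-\log(1+y)\bigr)\le\epsilon$. A second-order Taylor expansion of $y\mapsto y-\log(1+y)$ at $0$ has remainder $y^2/\bigl(2(1+\xi)^2\bigr)$ with $\xi$ between $0$ and $y$; on $|y|\le 1/2$ this yields $y-\log(1+y)\le 2y^2$. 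Hence the left-hand side is at most $2^{N+1}y^2=2^{1-N}x^2\le 2^{1-N}\widehat{L}^2$, and requiring $2^{1-N}\widehat{L}^2\le\epsilon$ forces $N\ge 1+2\log_2\widehat{L}+\log_2(1/\epsilon)$, i.e. $N=O(\log(\widehat{L})+\log(1/\epsilon))$, matching the claimed order.

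Finally, for the conic representation I would read each Lorentz constraint as a squaring inequality: $(2r_1,v-1,v+1)\in\L_3$ is equivalent to $v\ge r_1^2$, and $(2r_{k+1},r_k-1,r_k+1)\in\L_3$ to $r_k\ge r_{k+1}^2$. Chaining these with $r_N=1+2^{-N}x$ gives $v\ge r_1^2\ge r_2^4\ge\cdots\ge r_N^{2^N}=\psi_N(x)$, proving one inclusion; conversely, given $v\ge\psi_N(x)$ I would set $r_k:=r_N^{2^{N-k}}$ for $k\in[N-1]$, which are nonnegative and satisfy every constraint with equality except possibly the first, establishing the reverse inclusion.

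The main obstacle is the lower-bound analysis: obtaining a clean quadratic control of $y-\log(1+y)$ together with the bookkeeping that keeps $1+2^{-N}x$ positive and $|y|$ small, so that a single choice $N=O(\log(\widehat{L})+\log(1/\epsilon))$ serves all the requirements at once. The conic representation is then a routine repeated-squaring argument, essentially the standard way of expressing the epigraph of $s\mapsto s^{2^N}$ through $N$ nested second-order cone constraints.
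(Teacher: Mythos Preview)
Your proposal is correct and follows essentially the same route as the paper. The paper also restricts to $|y|\le 1/2$ via $N\ge\log_2(2\widehat{L})$, invokes the elementary double inequality $\exp(y-4y^2)\le 1+y\le\exp(y)$ (your Taylor argument gives the equivalent but slightly sharper $y-\log(1+y)\le 2y^2$), raises to the $2^N$th power, and then reads off $N=\log_2(4\widehat{L}^2/\epsilon)$; for the SOC part it simply cites the ``towers of variables'' construction in Ben-Tal and Nemirovski, which is exactly the repeated-squaring chain you spell out.
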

	\begin{proof}
		Let $N=\log(4\hat{L}^2/\epsilon)$. Clearly, we have $2^N>2\hat{L}$.  
		Then for $x\in [-\hat{L},\hat{L}]$, setting $y=x/2^N$, we have $|y|\leq1/2$ and
		\[\exp(y-4y^2) \leq 1+y \leq \exp(y).\]
		Consequently,
		\[\exp(2^Ny-2^{N+2}y^2)\leq \left(1+y\right)^{2^N} \leq \exp(2^Ny).\]
		Since $x=2^Ny$, 
		\[\exp(x-x^2/2^{N-2})\leq \left(1+x/2^N\right)^{2^N} \leq \exp(x).\]
		Letting $N=\log(4\hat{L}^2/\epsilon)$, 
		we have the following approximation bound
		\[ \exp(x-\epsilon)\leq \left(1+x/2^N\right)^{2^N} \leq \exp(x), \forall x\in [-\hat{L},\hat{L}].\] 
		%
		
		The SOC representation for the epigraph of $\psi_N(x)$ uses the towers of variables and can also be found in \cite{ben2001lectures}.
		\QEDA
	\end{proof}
	Note that this result is different from proposition 2.3.7 \citep{ben2001lectures}. The latter showed $(1-\epsilon) \exp(x)\leq \psi_N(x)=(1+2^{-N}x)^{2^N} \leq \exp(x)$, which is not applicable to the approximation form of the exponential cone \eqref{eq_ec2}.
	
	Since the exponential cone in the form of \eqref{eq_ec2} can be viewed as the epigraph of perspective of the exponential function, the SOC representation of the approximation in Proposition \ref{prop_approx_exp} can be directly applied to the exponential cone \eqref{eq_ec2}.
	\begin{theorem}\label{prop_approx_exp_cone2}
		Suppose that $x_3\in [-2M\log M, 2M\log M]$ and for any $\epsilon \in (0,2M^2\log(M))$, let $N=O(\log(M^2\log M)+\log(1/\epsilon))$ and SOC representable set $\hat{K}_{\psi,N}^{sc}:=\{\bm{x}\in \R^{2}\times \Re: (x_1/x_2,x_3/x_2)\in \epi(\psi_N) \}$. 
		Then, we have
		$K_{\exp}(0)\subseteq \hat{K}_{\psi,N}^{sc}\subseteq K_{\exp}(\epsilon).$ Particularly, 
		\[\hat{K}_{\psi,N}^{sc}:=\left\{\bm{x}\in \R^{2}\times \Re: \begin{aligned}
		&r_k\geq 0, \forall k\in [N-1], \\
		&(2r_1,x_1-x_2,x_1+x_2)\in \L_3, \\
		&(2r_{k+1},r_{k}-x_2,r_{k}+x_2)\in \L_3, \forall k\in [N-1],\\
		&r_N=x_2+2^{-N}x_3\end{aligned}\right\}.\]
	\end{theorem}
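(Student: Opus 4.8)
The plan is to lift the scalar approximation of Proposition~\ref{prop_approx_exp} to the conic setting by applying the perspective transformation, exactly as Theorem~\ref{prop_approx_exp_cone} did for the logarithmic form. First I would recall from \eqref{eq_ec2} that $K_{\exp}(\alpha)=\{\bm{x}\in \R^2\times\Re:(x_3/x_2,x_1/x_2)\in\epi\exp(\alpha)\}$, and that by definition $\hat{K}_{\psi,N}^{sc}=\{\bm{x}\in\R^2\times\Re:(x_1/x_2,x_3/x_2)\in\epi\psi_N\}$, so the containment $K_{\exp}(0)\subseteq\hat{K}_{\psi,N}^{sc}\subseteq K_{\exp}(\epsilon)$ is equivalent to the scalar statement $\epi\exp(0)\subseteq\epi\psi_N\subseteq\epi\exp(\epsilon)$ on the relevant domain. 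Here I must be careful about the argument order: in \eqref{eq_ec2} the first slot is $x_3/x_2$ (the exponent) and the second is $x_1/x_2$ (the value), whereas $\epi\psi_N$ uses $x_1/x_2$ as the value and $x_3/x_2$ as the argument, so I would verify that the roles of $x_1/x_2$ and $x_3/x_2$ are consistent with $\psi_N$ approximating $\exp$ from below.

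Next I would translate the scalar bound $\exp(x-\epsilon)\le\psi_N(x)\le\exp(x)$ of Proposition~\ref{prop_approx_exp} into the epigraph inclusions. Since $\psi_N(x)\le\exp(x)$ means $\epi\exp\subseteq\epi\psi_N$ (a smaller function has a larger epigraph), and $\exp(x-\epsilon)\le\psi_N(x)$ means $\epi\psi_N\subseteq\{(x,t):\exp(x-\epsilon)\le t\}=\epi\exp(\epsilon)$, the two-sided scalar sandwich gives precisely the desired two-sided epigraph sandwich, and hence the conic sandwich after applying perspective. The SOC representation of $\hat{K}_{\psi,N}^{sc}$ then follows by homogenizing the tower-of-variables representation of $\epi\psi_N$ in Proposition~\ref{prop_approx_exp}: replace each constant $1$ by the perspective variable $x_2$, so $r_N=x_2+2^{-N}x_3$ and each Lorentz cone $(2r_{k+1},r_k-1,r_k+1)$ becomes $(2r_{k+1},r_k-x_2,r_k+x_2)$, which is legitimate because Lorentz-cone constraints are preserved under the perspective scaling and SOC representability is closed under this operation (cf.\ \cite{ben2001lectures}).

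The one genuine verification, and the step I expect to be the main obstacle, is establishing the correct domain and the value of $N$. I would need to check that the scalar domain hypothesis $x\in[-\hat L,\hat L]$ of Proposition~\ref{prop_approx_exp} is met with $x=x_3/x_2$; since $x_2\in\R=[1/M,M]$ and $x_3\in[-2M\log M,2M\log M]$, the argument $x_3/x_2$ ranges within $[-2M^2\log M,2M^2\log M]$, so I would set $\hat L=2M^2\log M$. Plugging this into $N=O(\log(\hat L)+\log(1/\epsilon))=O(\log(M^2\log M)+\log(1/\epsilon))$ recovers the stated complexity. I would also confirm the accuracy parameter is interpreted correctly after the perspective scaling: the scalar error $\epsilon$ in $\exp(x-\epsilon)\le\psi_N(x)$ corresponds to the cone $K_{\exp}(\epsilon)$ because the shift by $\epsilon$ in the exponent matches the definition of $\epi\exp(\epsilon)$ and hence of the parametric cone. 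Once the domain bound $\hat L=2M^2\log M$ and the range condition $\epsilon\in(0,2M^2\log M)$ are reconciled with the hypotheses of Proposition~\ref{prop_approx_exp}, the rest is a direct application.
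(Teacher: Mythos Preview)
Your proposal is correct and follows exactly the paper's approach: the paper's proof is a two-line argument that invokes Proposition~\ref{prop_approx_exp} with $\hat L=2M^2\log(M)$ for the approximation bound and cites \cite{ben2001lectures} for the SOC representability of the perspective, which is precisely the plan you lay out in more detail. Your additional care with the argument order in $\epi\psi_N$ and the explicit verification of the domain bound $|x_3/x_2|\le 2M^2\log M$ are useful clarifications but do not depart from the paper's route.
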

	\begin{proof}
		Since $\hat{K}_{\psi,N}^{sc}:=\{\bm{x}\in \R^{2}\times \Re: (x_1/x_2,x_3/x_2)\in \epi(\psi_N) \}$ and $N=O(\log(M^2\log M)+\log(1/\epsilon)$, the approximation result follows from Proposition \ref{prop_approx_exp} by letting $\hat{L}=2M^2\log(M)$.
		Besides, since 
		set $ \epi(\psi_N) $ is SOC representable, according to \cite{ben2001lectures}, set $\hat{K}_{\psi,N}^{sc}$ is also SOC representable.
		%
		\QEDA
	\end{proof}
	It is worthy of mentioning that the approximation $\hat{K}_{\psi,N}^{sc}$ obtained in Theorem \ref{prop_approx_exp_cone2} is an outer one given that $N\geq\log_{2}(2M^2\log(M))$.

	\subsection{Strengthening Using Even-Order Taylor Expansions}\label{sec_exp_taylor}
	
	The approximation in the previous subsection relies on the fact that $\exp(x)=(\exp(x/2^N))^{2^N}$ and $\exp(x/2^N)\approx 1+x/2^N$. To strengthen it, we approximate $\exp(x/2^N)$ more tightly using its $2s$-order Taylor expansion with $s\geq 1$. Particularly, letting $y=x/2^N$, we have the following approximation results
	\begin{align}
		\exp(y)\approx \hat{\psi}_{N,2s}(y):=\sum_{i\in [0,2s]}\frac{y^i}{i!} \;\mathrm{and}\; \exp(x)\approx [\hat{\psi}_{N,2s}(y)]^{2^N}.
	\end{align}
	
	We first observe that the polynomial function $\hat{\psi}_{N,2s}(y)\geq 0$ for all $y\in \Re$ and thus is sum-of-squares representable.
	\begin{proposition}\label{prop_nonnega} For any $y$, the polynomial function $\hat{\psi}_{N,2s}(y)\geq 0$ and is sum-of-squares representable.
	\end{proposition}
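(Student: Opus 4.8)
The plan is to prove the stronger statement that $\hat{\psi}_{N,2s}(y)>0$ for every $y\in\Re$, and then invoke the classical fact that every univariate polynomial nonnegative on the whole real line admits a sum-of-squares certificate. Note first that $\hat{\psi}_{N,2s}$ does not actually depend on $N$, so it suffices to analyze the truncated exponential $p_{2s}(y):=\sum_{i\in[0,2s]}y^i/i!$. The crucial structural fact is the derivative recursion $p_{2s}'(y)=\sum_{i\in[2s]}y^{i-1}/(i-1)!=p_{2s-1}(y)$, together with the identity $p_{2s}(y)=p_{2s-1}(y)+y^{2s}/(2s)!$.

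For the positivity step I would argue via critical points. At any critical point $y_0$ of $p_{2s}$ we have $p_{2s}'(y_0)=p_{2s-1}(y_0)=0$, and substituting into the identity above gives $p_{2s}(y_0)=y_0^{2s}/(2s)!$. Since $p_{2s-1}(0)=1\neq 0$, no critical point occurs at the origin, so $y_0\neq 0$ and hence $p_{2s}(y_0)=y_0^{2s}/(2s)!>0$. Because $p_{2s}$ has even degree $2s$ with positive leading coefficient $1/(2s)!$, it is coercive, so it attains its global minimum at some finite point, which is necessarily a critical point; the displayed computation shows that value is strictly positive. Therefore $p_{2s}(y)>0$ for all $y\in\Re$.

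To convert nonnegativity into an explicit sum-of-squares representation, I would factor $\hat{\psi}_{N,2s}$ over $\Ce$. Since the coefficients are real and the polynomial never changes sign, each real root must have even multiplicity, while the non-real roots occur in conjugate pairs. Collecting one factor from each conjugate pair together with half of each even-multiplicity real factor (and a factor $\sqrt{1/(2s)!}$ for the leading coefficient) yields a complex polynomial $r(y)=a(y)+i\,b(y)$ with real polynomial parts $a,b$ such that $\hat{\psi}_{N,2s}(y)=r(y)\overline{r(y)}=a(y)^2+b(y)^2$ for all real $y$. This exhibits $\hat{\psi}_{N,2s}$ as a sum of two squares, hence sum-of-squares representable (and, being of degree $2s$, SOC/SDP representable).

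The main obstacle is the positivity step; the critical-point argument is the crux, and the two points requiring care are verifying that the origin is not a critical point (so that $y_0^{2s}/(2s)!$ is strictly positive) and that coercivity guarantees the global minimizer is an interior critical point rather than escaping to infinity. Once positivity is secured, the sum-of-squares step is a standard consequence of the fundamental theorem of algebra for univariate polynomials and introduces no new difficulty.
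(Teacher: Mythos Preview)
Your proof is correct, and the core algebraic step---evaluating $p_{2s}$ at a critical point via the identity $p_{2s}(y)=p_{2s-1}(y)+y^{2s}/(2s)!$ together with $p_{2s}'=p_{2s-1}$---is exactly the same computation the paper performs. The route to that step differs, however. The paper argues by induction on $s$: assuming $p_{2s-2}\geq 0$, it concludes $p_{2s}''=p_{2s-2}\geq 0$, so $p_{2s}$ is convex and therefore has a (finite) minimizer satisfying the first-order condition. You bypass the induction entirely by invoking coercivity of the even-degree polynomial with positive leading coefficient, which already guarantees a finite global minimizer and hence a critical point. Your approach is more direct, avoids the inductive scaffolding, and as a bonus yields strict positivity (since $p_{2s-1}(0)=1$ forces $y_0\neq 0$), whereas the paper only records $\geq 0$. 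For the sum-of-squares conclusion, the paper simply cites the classical fact about nonnegative univariate polynomials; your explicit two-square construction via complex conjugate factorization is the standard proof of that fact and is a welcome elaboration.
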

	\begin{proof}
		We will prove that the minimum of $\hat{\psi}_{N,2s}(y)$ is nonnegative for any $y$ using induction
		When $s=1$, we have $\hat{\psi}_{N,2}(y)=1/2+(y+1)^2/2>0$.  Suppose $\hat{\psi}_{N,2t}(y)\geq0$ for any $s\leq t$. Now let $s=t+1$.
		According to induction, we have $d^2\hat{\psi}_{N,2t+2}/{dy^2}=\hat{\psi}_{N,2t}(y)\geq0$ and thus $\hat{\psi}_{N,2t+2}$ is convex. Hence, $\min_{y}\hat{\psi}_{N,2s}(y)$ is an unconstrained convex optimization problem. 
		An optimal solution $y^*$ must satisfy the first order optimality condition ${d\hat{\psi}_{N,2t+2}}(y^*)/{dy}=\hat{\psi}_{N,2t+1}(y^*)=0$. 
		Thus, we have
		\[\min_{y}\hat{\psi}_{N,2s}(y):=\hat{\psi}_{N,2t+2}(y^*)=\hat{\psi}_{N,2t+1}(y^*)+\frac{(y^*)^{2t+2}}{(2t+2)!}\geq0.\]
		Therefore, the function $\hat{\psi}_{N,2s}(y)\geq 0$ for any $y$.
		
		It is known that a nonnegative polynomial function can be written as a sum of squares (see, e.g., \cite{blekherman2012nonnegative}).
		\QEDA
	\end{proof}
	Proposition \ref{prop_nonnega} shows that the function $\hat{\psi}_{N,2s}$ is sum-of-squares representable. However, it is unknown how to represent the function $\hat{\psi}_{N,2s}$ as an SOC program. In \cite{ben2001lectures}, the authors suggested reformulating it as
	\begin{align}\label{eq_fun}
		\hat{\psi}_{N,2s}(y)=\sum_{j\in [0,s]}\frac{\alpha_j}{(2j)!}(\beta_j +y)^{2j}
	\end{align}
	in a hope that $\alpha_j\geq 0$ for all $j\in [0,s]$. We disprove this claim by numerically showing that some components of vector $\bm{\alpha}$ become negative whenever $s\geq 34$. We first show the following efficient way of finding coefficients of function $\hat{\psi}_{N,2s}(y)$.
	\begin{proposition}\label{prop_conic_sos}
		The coefficients in \eqref{eq_fun} can be found recursively as
		\begin{align}\label{eq_coeff}
			\alpha_{s}=1,\beta_s=1, \sum_{j\in [0,k]}\frac{\alpha_{s-j}\beta_{s-j}^{2k-2j}}{(2k-2j)!}=1,
			\sum_{j\in [0,k]}\frac{\alpha_{s-j}\beta_{s-j}^{2k-2j+1}}{(2k-2j+1)!}=1, \forall k \in [0,s-1].
		\end{align}
	\end{proposition}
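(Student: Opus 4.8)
The plan is to prove \eqref{eq_coeff} by a direct coefficient comparison. Since $\hat{\psi}_{N,2s}(y)=\sum_{i\in[0,2s]}y^i/i!$ and the right-hand side of \eqref{eq_fun} is also a polynomial of degree $2s$ in $y$, the representation \eqref{eq_fun} holds if and only if the two polynomials agree coefficient by coefficient. So the entire statement reduces to writing out those matching conditions and recognizing that they are precisely \eqref{eq_coeff}, together with the observation that the conditions can be solved one pair $(\alpha_{s-k},\beta_{s-k})$ at a time.

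First I would expand each summand on the right of \eqref{eq_fun} by the binomial theorem and use $\binom{2j}{p}/(2j)!=1/(p!\,(2j-p)!)$ to collect powers of $y$, obtaining
\[
\sum_{j\in[0,s]}\frac{\alpha_j}{(2j)!}(\beta_j+y)^{2j}
=\sum_{p\in[0,2s]}\left(\sum_{j=\lceil p/2\rceil}^{s}\frac{\alpha_j\beta_j^{2j-p}}{(2j-p)!}\right)\frac{y^p}{p!}.
\]
Matching the coefficient of $y^p$ against the coefficient $1/p!$ of $\hat{\psi}_{N,2s}$ and clearing $p!$ yields the single family $\sum_{j=\lceil p/2\rceil}^{s}\alpha_j\beta_j^{2j-p}/(2j-p)!=1$ for all $p\in[0,2s]$. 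I would then relabel the index by distance from the top degree via $i=s-j$ and split according to the parity of $p$: writing an even power as $p=2(s-k)$ and an odd power as $p=2(s-k)-1$ turns the lower limit $\lceil p/2\rceil$ into $s-k$, so the sum runs over $i\in[0,k]$ and $2j-p$ becomes $2k-2i$ in the even case and $2k-2i+1$ in the odd case. These are exactly the two identities in \eqref{eq_coeff} (the even family covering $p\in\{2s,2s-2,\dots,0\}$, i.e.\ $k\in[0,s]$ with the $k=s$ instance pinning the constant term, and the odd family covering $p\in\{2s-1,\dots,1\}$, i.e.\ $k\in[0,s-1]$). The initialization $\alpha_s=1,\beta_s=1$ falls out of the two top-degree equations: $p=2s$ gives $\alpha_s=1$, and then $p=2s-1$ gives $\alpha_s\beta_s=1$.

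Finally, to justify that the coefficients are determined \emph{recursively}, I would point out that \eqref{eq_coeff} is triangular in the ordered unknowns $(\alpha_s,\beta_s),(\alpha_{s-1},\beta_{s-1}),\dots$. In the even equation for a fixed $k$ the term with $i=k$ is exactly $\alpha_{s-k}$ while every other term involves only pairs $(\alpha_{s-i},\beta_{s-i})$ with $i<k$ already computed at earlier steps; hence $\alpha_{s-k}$ is isolated and solved explicitly. Substituting this value into the odd equation for the same $k$, the term with $i=k$ equals $\alpha_{s-k}\beta_{s-k}$ and the remaining terms are again known, so $\beta_{s-k}$ follows. Thus the derivation is essentially a routine binomial expansion, and I expect the only real care to lie in the bookkeeping of the substitution $i=s-j$ with the even/odd case split, and in the one delicate analytic point of the recursion, namely that dividing out $\alpha_{s-k}$ to recover $\beta_{s-k}$ requires $\alpha_{s-k}\neq0$; I would note that this is all that is needed here, since the subsequent discussion only uses the resulting vector $\bm{\alpha}$ and in fact exhibits instances where some of its entries become negative.
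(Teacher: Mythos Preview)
Your proposal is correct and follows essentially the same approach as the paper's proof: expand each $(\beta_j+y)^{2j}$ by the binomial theorem, match coefficients of $y^p$ against those of $\hat{\psi}_{N,2s}$, and re-index via $i=s-j$ with an even/odd split to obtain \eqref{eq_coeff}. Your write-up is in fact more careful than the paper's, which simply concludes with ``Solving these equations, we arrive at the conclusion'': you explicitly derive the initialization $\alpha_s=\beta_s=1$ from the top two powers, spell out the triangular structure that makes the system recursive, flag the division by $\alpha_{s-k}$, and correctly observe that the even family must extend to $k=s$ to pin down $\alpha_0$ from the constant term (a case not listed in \eqref{eq_coeff} as stated).
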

	\begin{proof}
		See Appendix \ref{sec_proof_prop_conic_sos}.
	\end{proof}

	Below, we illustrate some coefficients in Proposition \ref{prop_conic_sos} and disprove the claim in \cite{ben2001lectures}
	\begin{itemize}
		\item If $s=1$, we can reformulate $\hat{\psi}_{N,2s}(y)$ as
		\begin{align*}
			\hat{\psi}_{N,2}(y)=\frac{1}{2}+\frac{1}{2}(y+1)^{2}
		\end{align*}
		\item If $s=2$, we can reformulate $\hat{\psi}_{N,2s}(y)$ as
		\begin{align*}
			\hat{\psi}_{N,4}(y)=\frac{19}{72}+\frac{1}{4}\left(y+\frac{5}{3}\right)^{2}+\frac{1}{24}(y+1)^{4}
		\end{align*}
		\item If $s=40$, please find $(\bm{\alpha},\bm{\beta})$ in Table \ref{s40_alpha_beta}. We see that $\alpha_0,\ldots,\alpha_6<0$. In fact, according to the recursion in Proposition \ref{prop_conic_sos}, we are unable to represent $\hat{\psi}_{N,2s}(y)$ in the way of \eqref{eq_fun} whenever $s\geq 34$.
	\end{itemize}
	Although we firmly believe that $\hat{\psi}_{N,2s}(y)$ is SOC representable for any $s\in\Ze_{++}$, we are not able to prove it. Therefore, we leave it as an open question to the interested readers.
	
	\begin{table}[htbp]
		\centering
		\caption{The values of $\bm\alpha$ and $\bm\beta$ for $s=40$. Note: $\alpha_{40}=\beta_{40}=1$.}
		\vspace{-5pt}
		\label{s40_alpha_beta}
		\scriptsize\setlength{\tabcolsep}{3.0pt}
		\begin{tabular}{c|rrrrr||c|rrrrr}
			\hline
			\multicolumn{1}{c|}{$k$} & \multicolumn{5}{c||}{$\bm\alpha$} & \multicolumn{1}{c|}{$k$} & \multicolumn{5}{c}{$\bm\beta$}     \\ \hline
			0--4 	& -1.90e-2 & -8.09e-3 & -2.46e-3 & -5.00e-4 & -6.16e-5 & 0--4 & 37.11 & 36.86 & 36.56 & 36.16 & 35.59 \\
			5--9    & -3.88e-6 & -8.98e-8 & 1.17e-10 & 5.27e-9 & 1.25e-8 &	5--9 & 34.70 & 33.01 & 59.20 & 20.27 & 19.22 \\ 
			10--14 & 2.44e-8 & 4.45e-8 & 7.94e-8 & 1.40e-7 & 2.47e-7 &	10--14	& 18.50 & 17.88 & 17.29 & 16.72 & 16.16 \\
			15--19 & 4.36e-7 & 7.69e-7 & 1.36e-6 & 2.39e-6 & 4.23e-6 &	15--19	& 15.59 & 15.02 & 14.45 & 13.89 & 13.32 \\ 
			20--24 & 7.46e-6 & 1.32e-5 & 2.33e-5 & 4.13e-5 & 7.31e-5 &	20--24	& 12.75 & 12.18 & 11.61 & 11.04 & 10.47 \\
			25--29 & 1.29e-4 & 2.30e-4 & 4.08e-4 & 7.24e-4 & 1.29e-3 &	25--29	& 9.90  & 9.32  & 8.75  & 8.18  & 7.60  \\
			30--34 & 2.30e-3 & 4.10e-3 & 7.34e-3 & 1.32e-2 & 2.37e-2 &	30--34	& 7.02  & 6.44  & 5.86  & 5.28  & 4.69  \\
			35--39 & 4.28e-2 & 7.77e-2 & 1.42e-1 & 2.64e-1 & 5.00e-1 &	35--39	& 4.10  & 3.51  & 2.91  & 2.30  & 1.67  \\ \hline
		\end{tabular}
	\end{table}
	
	Next, we derive the strengthened approximation results when we use \eqref{eq_fun} to approximate $\exp(x/2^N)$. The result can be applied to any $s\leq 33$.
	
	\begin{proposition}\label{prop_approx_exp_s} 
		Suppose that $x\in [-\hat{L},\hat{L}]$ for some positive parameter $\hat{L}\in \Re_+$, $s\in [33]$, and for any $\epsilon\in (0,\hat{L})$, we let $N=O(\log(\hat{L}/s)+\log(1/\epsilon)/s)$. Then
		we have the following approximation result 
		\[ \exp(x-\epsilon)\leq \overline{\psi}_{N,2s}(x)=(\hat{\psi}_{N,2s}(x/2^N))^{2^N} \leq \exp(x+\epsilon).\]
		Particularly, when $s=1$, the epigraph of $\overline{\psi}_{N,2}(x)$ admits the following SOC representation:
		\[\epi \overline{\psi}_{N,2}:=
		\left\{(x,v)\in \Re\times\Re: \begin{aligned}
		&r_k\geq 0, \forall k\in [N],\\
		&(2r_1,v-1,v+1)\in \L_3, \\
		&(2r_{k+1},r_{k}-1,r_{k}+1)\in \L_3, \forall k\in [N-1],\\
		&(1+2^{-N}x, r_N-1, r_N)\in \L_3\end{aligned}\right\}.
		\]
	\end{proposition}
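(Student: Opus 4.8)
The plan is to mimic the proof of Proposition~\ref{prop_approx_exp}, replacing the crude linear approximant $1+y$ by the higher-order Taylor polynomial $\hat{\psi}_{N,2s}$, and to control the error on a logarithmic scale. Since $\overline{\psi}_{N,2s}(x)=(\hat{\psi}_{N,2s}(x/2^N))^{2^N}$ and $\hat{\psi}_{N,2s}>0$ on the relevant domain (it is nonnegative by Proposition~\ref{prop_nonnega} and, being close to $\exp$ there, strictly positive), the desired two-sided bound $\exp(x-\epsilon)\le\overline{\psi}_{N,2s}(x)\le\exp(x+\epsilon)$ is equivalent, after taking logarithms and dividing by $2^N$, to the scalar estimate
\[
\left|\log\hat{\psi}_{N,2s}(y)-y\right|\le\epsilon/2^N,\qquad y:=x/2^N.
\]
First I would insist that $N\ge\log_2(2\hat{L})$ so that $|y|\le\hat{L}/2^N\le 1/2$; because $s\le 33$ is bounded by a constant, this requirement is absorbed into the claimed order $O(\log(\hat{L}/s)+\log(1/\epsilon)/s)$.

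The core estimate rests on the Taylor remainder $R(y):=\exp(y)-\hat{\psi}_{N,2s}(y)=\sum_{i\ge 2s+1}y^i/i!$, whose Lagrange form $R(y)=e^{\xi}y^{2s+1}/(2s+1)!$ (with $\xi$ between $0$ and $y$) supplies both sign and magnitude. Since $2s+1$ is odd, $R(y)$ has the sign of $y$: for $x>0$ one gets $\log\hat{\psi}_{N,2s}(y)<y$, hence $\overline{\psi}_{N,2s}(x)<\exp(x)$, while for $x<0$ the inequality reverses; this asymmetry is exactly why the bound here is two-sided, unlike the one-sided bound in Proposition~\ref{prop_approx_exp}. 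Writing $\log\hat{\psi}_{N,2s}(y)-y=\log(1-e^{-y}R(y))$ and using $|\log(1-u)|\le 2|u|$ for $|u|\le 1/2$ together with $|R(y)|\le 2|y|^{2s+1}/(2s+1)!$ on $|y|\le 1/2$, I would obtain
\[
2^N\left|\log\hat{\psi}_{N,2s}(y)-y\right|\le\frac{C\,2^N|y|^{2s+1}}{(2s+1)!}=\frac{C\,|x|^{2s+1}}{2^{2sN}(2s+1)!}\le\frac{C\,\hat{L}^{2s+1}}{2^{2sN}(2s+1)!}
\]
for an absolute constant $C$ absorbing $e^{-y}$ and the numerical factors. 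Requiring the right side to be at most $\epsilon$ gives $2sN\ge\log_2\bigl(C\hat{L}^{2s+1}/((2s+1)!\,\epsilon)\bigr)$; applying Stirling's estimate $\log_2((2s+1)!)=(2s+1)\log_2\tfrac{2s+1}{e}+O(\log s)$ and dividing by $2s$ yields $N=O(\log(\hat{L}/s)+\log(1/\epsilon)/s)$, as claimed. The gain from the higher-order expansion is precisely the factor $2s$ dividing $\log(1/\epsilon)$, coming from the $(2s+1)$-fold suppression of $|y|^{2s+1}=(|x|/2^N)^{2s+1}$.

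For the SOC representation in the case $s=1$, I would use the sum-of-squares form $\hat{\psi}_{N,2}(y)=\tfrac12+\tfrac12(y+1)^2$ from \eqref{eq_fun} and the standard ``tower of variables'' encoding of the $2^N$-th power. Decoding each rotated second-order constraint, $(2r_1,v-1,v+1)\in\L_3$ is equivalent to $v\ge r_1^2$, each $(2r_{k+1},r_k-1,r_k+1)\in\L_3$ to $r_k\ge r_{k+1}^2$, and the last constraint $(1+2^{-N}x,r_N-1,r_N)\in\L_3$ to $2r_N-1\ge(1+2^{-N}x)^2$, i.e.\ $r_N\ge\hat{\psi}_{N,2}(2^{-N}x)$. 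Chaining these together with the nonnegativity $r_k\ge0$ gives $v\ge r_1^2\ge r_2^4\ge\cdots\ge r_N^{2^N}\ge(\hat{\psi}_{N,2}(2^{-N}x))^{2^N}=\overline{\psi}_{N,2}(x)$, proving the forward inclusion; conversely, setting $r_N=\hat{\psi}_{N,2}(2^{-N}x)$ and $r_k=r_{k+1}^2$ recovers any $(x,v)$ with $v\ge\overline{\psi}_{N,2}(x)$, establishing equality of the two sets.

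I expect the main obstacle to be the bookkeeping in the sign-and-magnitude analysis of the remainder: one must confirm that the single magnitude bound simultaneously delivers the lower bound when $x>0$ and the upper bound when $x<0$ (and the reverse in the opposite regime), and that Stirling's estimate is applied cleanly enough to isolate the $1/s$ improvement in the $\log(1/\epsilon)$ term while keeping $s\le 33$ a harmless constant inside $\log(\hat{L}/s)$.
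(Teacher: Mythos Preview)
Your approach is correct and essentially the same as the paper's: bound the Taylor remainder $R(y)$ in magnitude, convert the multiplicative error into an additive error on the logarithmic scale, and invoke Stirling to extract the $1/s$ improvement. The paper's proof differs only in organization: it splits into the two cases $y\ge0$ and $y<0$, in the first case using the inequality $1-u\ge\exp(-u-4u^2)$ from the proof of Proposition~\ref{prop_approx_exp}, and in the second case using the auxiliary bound $\exp(y)\ge y^2/3$ on $[-1,0]$ to pass from $\hat{\psi}_{N,2s}(y)\le\exp(y)-y^{2s+1}/(2s+1)!$ to $\hat{\psi}_{N,2s}(y)\le\exp(y-3y^{2s-1}/(2s+1)!)$. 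Your unified treatment via $\log\hat{\psi}_{N,2s}(y)-y=\log(1-e^{-y}R(y))$ together with $|\log(1-u)|\le 2|u|$ is a bit cleaner and avoids the case split; the paper's version makes the sign asymmetry (and hence the reason the bound must be two-sided) more explicit. For the SOC representation the paper simply cites Propositions~\ref{prop_approx_exp} and~\ref{prop_conic_sos}, whereas you spell out the decoding of each rotated cone constraint; your chain $v\ge r_1^2\ge\cdots\ge r_N^{2^N}\ge\hat{\psi}_{N,2}(2^{-N}x)^{2^N}$ is exactly the intended argument.
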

	\begin{proof}
	For notational convenience, we let $y=x/2^N$. Suppose that $N\geq \log_2(\hat{L})$. Then we have $|y|\leq 1$. 
	
	Next, we prove the approximation result by discussing whether $y\geq 0$ or not.
	\begin{itemize}
		\item[Case 1. ] If $y\geq0$, according to the error formula of Taylor approximation, we have
		\[	0\leq \exp(y)-\hat{\psi}_{N,2s}(y) \leq \frac{y^{2s+1}}{(2s+1)!}\exp(y).\]
		According to the proof of Proposition \ref{prop_approx_exp}, we have
		\[	\exp\left(y-\frac{y^{2s+1}}{(2s+1)!}-\frac{4y^{4s+2}}{((2s+1)!)^2}\right)\leq \hat{\psi}_{N,2s}(y) \leq \exp(y).\]
		Since $x=2^Ny$, we obtain
		\[	\exp\left(x-\frac{x^{2s+1}}{2^{2Ns}(2s+1)!}-\frac{x^{4s+2}}{2^{N(4s+1)-2}((2s+1)!)^2}\right)\leq \overline{\psi}_{N,2s}(x) \leq \exp(x).\]
		Using the facts that $|x|\leq \hat{L}$, $N\geq 2$, and $(2s+1)!\geq ((2s+1)/e)^{2s+1}$,
		by choosing 
		$$N\geq \frac{(2s+1)\log(e\hat{L}/(2s+1))+\log(1/\epsilon)}{2s\log(2)}$$
		or $N=O(\log(\hat{L}/s)+\log(1/\epsilon)/s)$, we have  
		\[ \exp(x-\epsilon)\leq \overline{\psi}_{N,2s}(x) \leq \exp(x+\epsilon), \forall x\in [0,\hat{L}].\]
		\item[Case 2. ] 	Similarly, if $y\leq0$, according to the error formula of Taylor approximation, we have
		\[	0\geq \exp(y)-\hat{\psi}_{N,2s}(y) \geq \frac{y^{2s+1}}{(2s+1)!}.\]
		As $\exp(y)\geq y^2/3$ for any $y\in[-1,0]$, we have
		\[	\exp\left(y-\frac{3y^{2s-1}}{(2s+1)!}\right)\geq \hat{\psi}_{N,2s}(y) \geq \exp(y).\]
		Since $x=2^Ny$, we obtain
		\[	\exp\left(x-\frac{3x^{2s-1}}{2^{N(2s-2)}(2s+1)!}\right)\geq \overline{\psi}_{N,2s}(x)\geq \exp(x).\]
		Similarly, by choosing $N=O(\log(\hat{L}/s)+\log(1/\epsilon)/s)$, we have 
		\[ \exp(x-\epsilon)\leq \overline{\psi}_{N,2s}(x) \leq \exp(x+\epsilon), \forall x\in [-\hat{L},0].\]
		This proves the approximation result.
	\end{itemize}	
	
	The SOC representation for the epigraph of $\overline{\psi}_{N,2}(x)$ follows from Proposition \ref{prop_approx_exp} and Proposition~\ref {prop_conic_sos}.
	\QEDA
\end{proof}
Note that if the conjecture that $\hat{\psi}_{N,2s}(y)$ 
is SOC representable were true for any $s\in \Ze_{++}$, then one could choose $N=s=O(\sqrt{\log(1/\epsilon)})$ given that $\log(\hat{L})$ is a constant. This matches the best approximation result in Corollary \ref{cor_phi_2} of Example \ref{emp_2}.
In our numerical study, we use $s=1$ (i.e., set $\overline{\psi}_{N,2}(x)$) to effectively solve MIECPs, which unfortunately still has a numerical issue that cannot close the gap even if $N$ is very large. We will use the shifting method in the next subsection to resolve it.

Since the exponential cone in the form of \eqref{eq_ec2} can be viewed as the epigraph of perspective of the exponential function, the approximation result, as well as the SOC representations in Proposition \ref{prop_approx_exp_s}, can be directly applicable to the exponential cone \eqref{eq_ec2}.
%
\begin{theorem}\label{prop_approx_exp_cone2_s}
	Suppose that $x_3\in [-2M\log M, 2M\log M]$, $s\in [33]$, and for $\epsilon \in (0,2M^2\log(M))$, let $N=O(\log(M^2\log(M)/s)+\log(1/\epsilon)/s)$. Then we have conic representable $\hat{K}_{\overline{\psi},N,2s}^{sc}:=\{\bm{x}\in \R^{2}\times \Re: (x_1/x_2,x_3/x_2)\in \epi(\overline{\psi}_{N,2s}) \}$  such that
	$K_{\exp}(-\epsilon)\subseteq \hat{K}_{\overline{\psi},N,2s}^{sc}\subseteq K_{\exp}(\epsilon).$ 
	Particularly, when $s=1$, we have
	\[\hat{K}_{\overline{\psi},N,2}^{sc}:=\left\{\bm{x}\in \R^{2}\times \Re: \begin{aligned}
		&r_k\geq 0, \forall k\in [N-1],\\
		&(2r_1,x_1-x_2,x_1+x_2)\in \L_3, \\
		&(2r_{k+1},r_{k}-x_2,r_{k}+x_2)\in \L_3, \forall k\in [N-1],\\
		&(x_2+2^{-N}x_3, r_N-x_2, r_N)\in \L_3\end{aligned}\right\}.\]
\end{theorem}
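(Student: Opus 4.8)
The plan is to obtain this result as the conic lift of the one-dimensional approximation in Proposition~\ref{prop_approx_exp_s}, mirroring how Theorem~\ref{prop_approx_exp_cone2} lifts Proposition~\ref{prop_approx_exp}. The key observation is the perspective interpretation of \eqref{eq_ec2}: for any $\alpha$, membership $\bm{x}\in K_{\exp}(\alpha)$ is equivalent to $\exp(x_3/x_2-\alpha)\le x_1/x_2$, while by its SOC representation the defining condition of $\hat{K}_{\overline{\psi},N,2s}^{sc}$ unpacks to $\overline{\psi}_{N,2s}(x_3/x_2)\le x_1/x_2$. Since $x_2\in \R=[1/M,M]$ is bounded away from zero, both ratios are well defined, and the entire claim reduces to comparing $\overline{\psi}_{N,2s}$ with $\exp$ at the single point $x_3/x_2$.

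First I would check the domain hypothesis needed to invoke Proposition~\ref{prop_approx_exp_s}. From $x_3\in[-2M\log M,2M\log M]$ and $1/x_2\le M$ we get $|x_3/x_2|\le 2M^2\log M=:\hat{L}$, which is exactly the interval on which Proposition~\ref{prop_approx_exp_s} applies; moreover $\log(\hat{L}/s)=O(\log(M^2\log(M)/s))$, so the stated $N=O(\log(M^2\log(M)/s)+\log(1/\epsilon)/s)$ matches the required order. With $x_3/x_2$ in range, Proposition~\ref{prop_approx_exp_s} yields $\exp(x_3/x_2-\epsilon)\le \overline{\psi}_{N,2s}(x_3/x_2)\le \exp(x_3/x_2+\epsilon)$, and the two inclusions follow immediately. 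If $\bm{x}\in K_{\exp}(-\epsilon)$, then $\overline{\psi}_{N,2s}(x_3/x_2)\le \exp(x_3/x_2+\epsilon)\le x_1/x_2$, so $\bm{x}\in \hat{K}_{\overline{\psi},N,2s}^{sc}$; conversely, if $\bm{x}\in \hat{K}_{\overline{\psi},N,2s}^{sc}$, then $\exp(x_3/x_2-\epsilon)\le \overline{\psi}_{N,2s}(x_3/x_2)\le x_1/x_2$, so $\bm{x}\in K_{\exp}(\epsilon)$.

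It then remains to produce the explicit representation for $s=1$, which I would obtain by homogenizing the SOC representation of $\epi\overline{\psi}_{N,2}$ from Proposition~\ref{prop_approx_exp_s} with perspective parameter $\tau=x_2$: substitute argument $x_3/x_2$ and value $x_1/x_2$, rescale the tower variables $r_k\mapsto r_k x_2$, and multiply each affine expression by $x_2$. This turns $(2r_1,v-1,v+1)$ into $(2r_1,x_1-x_2,x_1+x_2)$, each $(2r_{k+1},r_k-1,r_k+1)$ into $(2r_{k+1},r_k-x_2,r_k+x_2)$, and the $s=1$ Lorentz constraint $(1+2^{-N}x,r_N-1,r_N)$ coming from Proposition~\ref{prop_conic_sos} into $(x_2+2^{-N}x_3,r_N-x_2,r_N)\in \L_3$, which is precisely the displayed set; because $x_2\ge 1/M>0$, the perspective is taken over the positive branch and preserves SOC representability \cite{ben2001lectures}. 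I do not expect a genuine obstacle here: the only points requiring care are fixing the order of the two coordinates in \eqref{eq_ec2} and confirming that the $\tau$-scaling sends each original constraint to its stated homogeneous form, both of which are routine bookkeeping once the perspective is set up.
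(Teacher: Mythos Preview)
Your proposal is correct and follows essentially the same approach as the paper: reduce to Proposition~\ref{prop_approx_exp_s} with $\hat{L}=2M^2\log(M)$ via the perspective interpretation \eqref{eq_ec2}, and then obtain the $s=1$ SOC description by homogenizing $\epi\overline{\psi}_{N,2}$ with scale parameter $x_2>0$ as in \cite{ben2001lectures}. Your write-up is in fact more explicit than the paper's, spelling out the domain check $|x_3/x_2|\le 2M^2\log M$, the two containments, and the coordinate-by-coordinate homogenization of the tower constraints.
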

\begin{proof}
	Since $\hat{K}_{\overline{\psi},N,2s}^{sc}:=\{\bm{x}\in \R^{2}\times \Re: (x_1/x_2,x_3/x_2)\in \epi(\overline{\psi}_{N,2s}) \}$ and $N=O(\log(M^2\log(M)/s)+\log(1/\epsilon)/s)$, the approximation result follows from Proposition \ref{prop_approx_exp_s}  by letting $\hat{L}=2M^2\log(M)$. Besides, since set $\epi(\overline{\psi}_{N,2s}) $ is SOC representable, according to \cite{ben2001lectures}, set $\hat{K}_{\overline{\psi},N,2s}^{sc}$ is also SOC representable.
	\QEDA
\end{proof}

\subsection{Shifting Using Approximate Solutions}\label{sec_exp_shift}
Similar to Example \ref{emp_3}, we propose to extend the approximation schemes in the previous subsections using the shifting method provided that an approximate solution is available.
To begin with, we summarize the results for applying the shifting method to the approximation proposed in Proposition \ref{prop_approx_exp}. 
\begin{proposition}\label{prop_approx_exp_shift} 
	Suppose $\hat{x}$ is a given approximate solution such that $\left|x-\hat{x}\right| \leq \delta$ with $\delta>0$ and for any $\epsilon\in (0,\delta]$, let $N=O(\log(\delta^2/\epsilon))$. Then we have the following approximation bound
	\[ \exp(x-\epsilon)\leq \exp(\hat{x})\psi_N^{sft}(x)=\exp(\hat{x})(1+2^{-N}(x-\hat{x}))^{2^N} \leq \exp(x).\]
	Particularly, the epigraph of $\psi_N^{sft}(x)$ is SOC representable, i.e.,
	\[\epi \psi_N^{sft}:=
	\left\{(x,v)\in \Re\times\Re: \begin{aligned}
		&r_k\geq 0, \forall k\in [N-1],\\
		&(2r_1,v-1,v+1)\in \L_3, \\
		&(2r_{k+1},r_{k}-1,r_{k}+1)\in \L_3, \forall k\in [N-1],\\
		&r_N=1+2^{-N}(x-\hat{x})\end{aligned}\right\}.
	\]
\end{proposition}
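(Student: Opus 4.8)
The plan is to reduce the statement directly to Proposition \ref{prop_approx_exp} through the multiplicative splitting $\exp(x)=\exp(\hat{x})\exp(x-\hat{x})$. First I would introduce the shifted variable $y:=x-\hat{x}$, so that the hypothesis $|x-\hat{x}|\leq\delta$ becomes exactly $y\in[-\delta,\delta]$. This is precisely the setting of Proposition \ref{prop_approx_exp} with the magnitude bound $\hat{L}$ now played by $\delta$. Applying the sharper intermediate inequality established inside the proof of that proposition to $y$ yields
\[\exp\left(y-y^2/2^{N-2}\right)\leq\left(1+y/2^N\right)^{2^N}\leq\exp(y).\]

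Next I would control the error term and fix $N$. Since $|y|\leq\delta$, the quadratic overshoot obeys $y^2/2^{N-2}\leq\delta^2/2^{N-2}$, and requiring this to be at most $\epsilon$ forces $2^{N-2}\geq\delta^2/\epsilon$, i.e.\ $N\geq 2+\log_2(\delta^2/\epsilon)$, which is $N=O(\log(\delta^2/\epsilon))$. With this choice the chain above becomes $\exp(y-\epsilon)\leq(1+2^{-N}y)^{2^N}\leq\exp(y)$. Multiplying the entire chain by the positive constant $\exp(\hat{x})$ and using $\exp(\hat{x})\exp(y)=\exp(x)$ together with $\exp(\hat{x})\exp(y-\epsilon)=\exp(x-\epsilon)$ recovers the claimed two-sided bound for $\exp(\hat{x})\psi_N^{sft}(x)=\exp(\hat{x})(1+2^{-N}(x-\hat{x}))^{2^N}$.

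For the SOC representation I would simply observe that $\psi_N^{sft}(x)=\psi_N(x-\hat{x})$, so its epigraph is obtained from the tower-of-variables construction in Proposition \ref{prop_approx_exp} by replacing the terminal identity $r_N=1+2^{-N}x$ with $r_N=1+2^{-N}(x-\hat{x})$; every squaring constraint $(2r_{k+1},r_k-1,r_k+1)\in\L_3$ is unchanged, so SOC representability is inherited verbatim.

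There is no genuine obstacle here: the content is entirely the observation that shifting by the known constant $\hat{x}$ replaces the uncontrolled magnitude $|x|$ (which in the cone setting of Theorem \ref{prop_approx_exp_cone2} scales like $M^2\log M$) by the much smaller solution-quality parameter $\delta$, thereby shrinking $N$ from $O(\log(M^2\log M)+\log(1/\epsilon))$ down to $O(\log(\delta^2/\epsilon))$. The one point requiring a line of care is to extract the quadratic error bound $y^2/2^{N-2}$ from the proof of Proposition \ref{prop_approx_exp} rather than invoking its looser $\epsilon$-level conclusion, so that the dependence on $\delta$ emerges as $\delta^2$ and matches the stated complexity exactly.
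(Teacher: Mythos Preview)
Your proposal is correct and follows essentially the same route as the paper: the paper's own proof is the single line ``follows directly from that of Proposition \ref{prop_approx_exp} by letting $y=(x-\hat{x})/2^{N}$,'' and you have simply written out that reduction in full, including the extraction of the intermediate quadratic error bound $y^{2}/2^{N-2}$ and the multiplication by $\exp(\hat{x})$. The SOC-representation remark is likewise identical in spirit.
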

\begin{proof} The proof follows directly from that of Proposition \ref{prop_approx_exp} by letting $y=(x-\hat{x})/2^{N}$.
	\QEDA
\end{proof}

Similarly, since the exponential cone in the form of \eqref{eq_ec2} can be viewed as the epigraph of perspective of the exponential function, the shifting results in Proposition \ref{prop_approx_exp_shift} can be directly applicable to the exponential cone \eqref{eq_ec2}.

\begin{theorem}\label{prop_approx_exp_cone2_shift}
	Suppose $\hat{\bm x}$ is an approximate solution such that $\left|x_3/x_2-\hat{x}_3/\hat{x}_2\right| \leq \delta$ 
	with $\delta>0$ and for any $\epsilon\in (0,\delta]$, let $N=O(\log(\delta^2/\epsilon))$ and set $\hat{K}_{\psi,N}^{sc,sft}:=\{\bm{x}\in \R^{2}\times \Re: (x_1/x_2,x_3/x_2)\in \epi(\psi_N^{sft}) \}$. Then we have
	$K_{\exp}(0)\subseteq \hat{K}_{\psi,N}^{sc,sft}\subseteq K_{\exp}(\epsilon).$ Particularly, 
	\[\hat{K}_{\psi,N}^{sc,sft}:=\left\{\bm{x}\in \R^{2}\times \Re: \begin{aligned}
		&r_k\geq 0, \forall k\in [N-1],\\
		&(2\sqrt{\exp(\hat{x}_3/\hat{x}_2)}r_1,x_1-x_2,x_1+x_2)\in \L_3, \\
		&(2r_{k+1},r_{k}-x_2,r_{k}+x_2)\in \L_3, \forall k\in [N-1],\\
		&r_N=x_2+2^{-N}(x_3-x_2(\hat{x}_3/\hat{x}_2))\end{aligned}\right\}.\]
\end{theorem}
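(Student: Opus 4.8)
The plan is to reduce Theorem~\ref{prop_approx_exp_cone2_shift} to the already-established shifting result for the scalar exponential function in Proposition~\ref{prop_approx_exp_shift}, exploiting the perspective structure of the exponential cone in the form \eqref{eq_ec2}. Recall from \eqref{eq_ec2} that $\bm{x}\in K_{\exp}(\alpha)$ if and only if $(x_3/x_2,x_1/x_2)\in\epi\exp(\alpha)$, i.e. $\exp(x_3/x_2-\alpha)\leq x_1/x_2$. Thus the approximation set $\hat{K}_{\psi,N}^{sc,sft}:=\{\bm{x}\in\R^2\times\Re:(x_1/x_2,x_3/x_2)\in\epi(\psi_N^{sft})\}$ is just the perspective of the one-dimensional epigraph, and verifying the sandwich $K_{\exp}(0)\subseteq\hat{K}_{\psi,N}^{sc,sft}\subseteq K_{\exp}(\epsilon)$ amounts to a pointwise statement about the scalar shifted approximant applied to the argument $u:=x_3/x_2$ with $\hat{u}:=\hat{x}_3/\hat{x}_2$.

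First I would fix any $\bm{x}\in\R^2\times\Re$ and set $u=x_3/x_2$, $\hat{u}=\hat{x}_3/\hat{x}_2$, noting the hypothesis $|u-\hat{u}|\leq\delta$. Applying Proposition~\ref{prop_approx_exp_shift} with the approximate point $\hat{u}$ and tolerance $\delta$ gives, for $N=O(\log(\delta^2/\epsilon))$, the scalar bound
\[\exp(u-\epsilon)\leq\exp(\hat{u})\psi_N^{sft}(u)\leq\exp(u).\]
The right inequality says $\exp(\hat{u})\psi_N^{sft}(u)\leq\exp(u)$, which is exactly the statement that the approximation is an inner bound, giving $\hat{K}_{\psi,N}^{sc,sft}\subseteq K_{\exp}(\epsilon)$ after multiplying through by $x_2>0$ and rewriting via \eqref{eq_ec2}; the left inequality gives $K_{\exp}(0)\subseteq\hat{K}_{\psi,N}^{sc,sft}$. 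The only care needed is the perspective bookkeeping: the epigraph condition $(x_1/x_2,x_3/x_2)\in\epi(\psi_N^{sft})$ reads $\exp(\hat{u})\psi_N^{sft}(u)\leq x_1/x_2$, and one translates each containment between the parametric cones into the corresponding scalar inequality, using that $x_2\in[1/M,M]$ is strictly positive so that multiplication by $x_2$ preserves inequalities and the perspective is well defined.

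The remaining task is the SOC representation, which is obtained by homogenizing the epigraph description of $\psi_N^{sft}$ from Proposition~\ref{prop_approx_exp_shift}. Starting from the tower $(2r_1,v-1,v+1)\in\L_3$, $(2r_{k+1},r_k-1,r_k+1)\in\L_3$, $r_N=1+2^{-N}(x-\hat{x})$, I would apply the perspective with scaling variable $x_2$: replace $v$ by $x_1/x_2$ and the constant $\hat{x}$ by $\hat{x}_3/\hat{x}_2$, and note that the extra factor $\exp(\hat{x}_3/\hat{x}_2)$ in the bound $\exp(\hat{u})\psi_N^{sft}(u)\leq x_1/x_2$ can be absorbed into the first cone by scaling, yielding the factor $2\sqrt{\exp(\hat{x}_3/\hat{x}_2)}$ on $r_1$; this is precisely why the displayed representation carries $\exp(\hat{x}_3/\hat{x}_2)$ only in the base constraint. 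The bottom relation then becomes $r_N=x_2+2^{-N}(x_3-x_2(\hat{x}_3/\hat{x}_2))$. The step I expect to require the most attention is this absorption of the $\exp(\hat{u})$ prefactor into the SOC tower: one must verify that rescaling $r_1$ by $\sqrt{\exp(\hat{u})}$ and propagating through the rotated-cone relations $(2r_{k+1},r_k-x_2,r_k+x_2)\in\L_3$ correctly reproduces $\exp(\hat{u})\,\psi_N^{sft}$ rather than some other power, and that the resulting set, after eliminating the auxiliary $r_k$, coincides with $\{\bm{x}:(x_1/x_2,x_3/x_2)\in\epi(\psi_N^{sft})\}$. Everything else—the complexity count $N=O(\log(\delta^2/\epsilon))$ and the SOC representability invoked from \cite{ben2001lectures}—is inherited directly from Proposition~\ref{prop_approx_exp_shift}.
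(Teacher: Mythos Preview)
Your approach is exactly the paper's: the proof there is stated as ``similar to that of Theorem~\ref{prop_approx_exp_cone2} and thus omitted,'' and the (excluded) details follow precisely your reduction to Proposition~\ref{prop_approx_exp_shift} via the perspective substitution $u=x_3/x_2$, $\hat u=\hat x_3/\hat x_2$, followed by the $x_2$-homogenization of the SOC tower and absorption of the factor $\exp(\hat u)$ into the first rotated cone. The structure and the complexity count are both right.

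One bookkeeping slip: you have the two inclusions attributed to the wrong inequalities. Since $\hat{K}_{\psi,N}^{sc,sft}=\{\bm x:\ x_1/x_2\ge \exp(\hat u)\psi_N^{sft}(u)\}$ and $K_{\exp}(\alpha)=\{\bm x:\ x_1/x_2\ge \exp(u-\alpha)\}$, the \emph{right} inequality $\exp(\hat u)\psi_N^{sft}(u)\le \exp(u)$ says the approximant underestimates $\exp(u)$, hence its epigraph is \emph{larger}, giving $K_{\exp}(0)\subseteq \hat{K}_{\psi,N}^{sc,sft}$; the \emph{left} inequality $\exp(u-\epsilon)\le \exp(\hat u)\psi_N^{sft}(u)$ then yields $\hat{K}_{\psi,N}^{sc,sft}\subseteq K_{\exp}(\epsilon)$. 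Swap your two attributions and the argument goes through verbatim.
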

\begin{proof}The proof is similar to that of Theorem \ref{prop_approx_exp_cone2} and thus is omitted.
	\exclude{	For any $\epsilon >0$, $K_{\exp}(-\epsilon)\subseteq \hat{K}_{\psi,N}^{sc,sft}\subseteq K_{\exp}(\epsilon)$ is equivalent to
		\[ \exp(x_3/x_2-\epsilon)\leq \exp(\hat{x}_3/\hat{x}_2) \psi_N^{sft}(x_3/x_2)=\exp(\hat{x}_3/\hat{x}_2)(1+2^{-N}(x_3/x_2-\hat{x}_3/\hat{x}_2))^{2^N} \leq \exp(x_3/x_2+\epsilon).\]
		According to Proposition \ref{prop_approx_exp_shift}, we can choose $N=\log(4\delta^2/\epsilon)$ to satisfy the inequalities. 

		Next, we will show the SOC approximation of the exponential cone \eqref{eq_ec2}.
		Let $(x_1/x_2,x_3/x_2)\in \epi(\psi_N^s)$. 
		Since $x_2\geq0$, we can scale the variables to approximate the exponential cone \eqref{eq_ec2} using SOC approximation. Specifically, we let $r_N=x_2r_N$, $r_1=\sqrt{\exp(\hat{x}_3/\hat{x}_2)}r_1$, $v-1=x_2(v-1)$, $v+1=x_2(v+1)$, and
		$r_{k}-1=r_{k}-x_2$, $r_{k}+1=r_{k}+x_2$ for all $k\in [N-1]$. From $(2\sqrt{\exp(\hat{x}_3/\hat{x}_2)}r_1,x_1-x_2,x_1+x_2)\in \L_3$ and $(2r_{k+1},r_{k}-x_2,r_{k}+x_2)\in \L_3$ for all $k\in [N-1]$, we get $x_1x_2\geq (r_N)^{2^N}\exp(\hat{x}_3/\hat{x}_2)/(x_2)^{2^N-2}$. Since $r_N=x_2+2^{-N}(x_3-x_2(\hat{x}_3/\hat{x}_2))$, we have $$x_1/x_2\geq (1+2^{-N}(x_3/x_2-(\hat{x}_3/\hat{x}_2)))^{2^N}\exp(\hat{x}_3/\hat{x}_2),$$
		which is equivalent to $(x_1/x_2,x_3/x_2)\in \epi(\psi_N^s)$.}
	\QEDA
\end{proof}

Next, we summarize the results for applying the shifting method to the approximation scheme proposed in Section \ref{sec_exp_taylor}.
%
\begin{proposition}\label{prop_approx_exp_s_shift} 
	Suppose $\hat{x}$ is an approximate solution such that $\left|x-\hat{x}\right| \leq \delta$ with $\delta>0$ and for any $\epsilon\in (0,\delta]$, let $N=O(\log(\delta/s)+\log(1/\epsilon)/s)$. Then we have the following approximation result
	\[ \exp(x-\epsilon)\leq \exp(\hat{x})\overline{\psi}_{N,2s}^{sft}(x)=\exp(\hat{x})(\hat{\psi}_{N,2s}((x-\hat{x})/2^N))^{2^N} \leq \exp(x+\epsilon).\]
	Particularly, when $s=1$, the epigraph of $\overline{\psi}_{N,2}^{sft}(x)$ is SOC representable, i.e.,
	\[\epi(\overline{\psi}_{N,2}^{sft}):=
	\left\{(x,v)\in \Re\times\Re: \begin{aligned}
		&r_k\geq 0, \forall k\in [N-1],\\
		&(2r_1,v-1,v+1)\in \L_3, \\
		&(2r_{k+1},r_{k}-1,r_{k}+1)\in \L_3, \forall k\in [N-1],\\
		&(1+2^{-N}(x-\hat{x}), r_N-1, r_N)\in \L_3\end{aligned}\right\}.\]
\end{proposition}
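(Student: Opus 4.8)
The plan is to reduce this statement to the unshifted Proposition~\ref{prop_approx_exp_s} via the additive change of variables $w := x - \hat{x}$, in exactly the same spirit that Proposition~\ref{prop_approx_exp_shift} is obtained from Proposition~\ref{prop_approx_exp}. The identity driving the reduction is the factorization $\exp(x) = \exp(\hat{x})\exp(w)$: the factor $\exp(\hat{x})$ is precisely the one already multiplying $\overline{\psi}_{N,2s}^{sft}(x)$ in the claim, so establishing the bound amounts to controlling $\exp(w)$ by $\overline{\psi}_{N,2s}(w)$ on an interval centered at the origin.

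First I would note that the hypothesis $|x-\hat{x}|\le\delta$ is precisely $w\in[-\delta,\delta]$, so Proposition~\ref{prop_approx_exp_s} applies to the variable $w$ with domain parameter $\hat{L}=\delta$. This yields both the sizing $N=O(\log(\delta/s)+\log(1/\epsilon)/s)$, which is the claimed complexity after the substitution $\hat{L}=\delta$, and the two-sided bound $\exp(w-\epsilon)\le\overline{\psi}_{N,2s}(w)\le\exp(w+\epsilon)$. Since $\overline{\psi}_{N,2s}(w)=(\hat{\psi}_{N,2s}(w/2^N))^{2^N}=\overline{\psi}_{N,2s}^{sft}(x)$ by definition, I would then multiply this chain by the strictly positive constant $\exp(\hat{x})$ and use $\exp(\hat{x})\exp(w\pm\epsilon)=\exp(x\pm\epsilon)$ to obtain the desired $\exp(x-\epsilon)\le\exp(\hat{x})\overline{\psi}_{N,2s}^{sft}(x)\le\exp(x+\epsilon)$.

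For the SOC representation at $s=1$, I would start from the representation of $\epi\,\overline{\psi}_{N,2}$ in Proposition~\ref{prop_approx_exp_s}, whose tower of Lorentz-cone constraints enforces $v\ge r_1^2\ge\cdots\ge r_N^{2^N}$, while the base-level constraint $(1+2^{-N}x,\,r_N-1,\,r_N)\in\L_3$ forces $r_N\ge\hat{\psi}_{N,2}(x/2^N)$. Substituting $x-\hat{x}$ for $x$ in that single base constraint changes the forced lower bound to $r_N\ge\hat{\psi}_{N,2}((x-\hat{x})/2^N)$, and the unchanged tower then propagates this to $v\ge(\hat{\psi}_{N,2}((x-\hat{x})/2^N))^{2^N}=\overline{\psi}_{N,2}^{sft}(x)$, i.e.\ membership in $\epi(\overline{\psi}_{N,2}^{sft})$.

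Because the whole argument is a single substitution into an already-established result, I expect no genuine obstacle. The only points meriting care are the bookkeeping of the domain parameter $\hat{L}=\delta$ so that the reported complexity is correct, and the observation that multiplying by $\exp(\hat{x})>0$ preserves the direction of both inequalities; unlike the multiplicative scaling of Example~\ref{emp_3}, the shift here is additive in the exponent, so no Taylor error terms need to be re-derived.
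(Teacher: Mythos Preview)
Your proposal is correct and is essentially the same reduction the paper uses: the paper's one-line proof reads ``follows from Proposition~\ref{prop_approx_exp_s} by letting $y=(x-\hat{x})/2^N$,'' which is exactly your substitution $w=x-\hat{x}$ followed by an application of Proposition~\ref{prop_approx_exp_s} with $\hat{L}=\delta$ and multiplication by $\exp(\hat{x})$. The SOC-representation argument via replacing $x$ by $x-\hat{x}$ in the base constraint likewise matches the paper.
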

\begin{proof} The proof follows from Proposition \ref{prop_approx_exp_s} by letting $y=(x-\hat{x})/2^N$.
	\exclude{	Let $\hat{x}$ be a near-optimal solution such that $-\delta\leq x-\hat{x} \leq \delta$, where $\delta\in(0,1)$. 
		Set $y=(x-\hat{x})/2^N$. For $y\geq0$, according to the proof of Proposition \ref{prop_approx_exp_s}, we have 
		\[	\exp\left(2^{N}y-\frac{2^{N}y^{2s+1}}{(2s+1)!}-\frac{2^{N+2}y^{4s+2}}{((2s+1)!)^2}\right)\leq \left(\hat{\psi}_{N,s}(y)\right)^{2^N} \leq \exp(2^{N}y).\]
		Since $x=2^Ny+\hat{x}$, we can obtain 
		\[\exp\left((x-\hat{x})-\frac{(x-\hat{x})^{2s+1}}{2^{N2s}(2s+1)!}-\frac{4(x-\hat{x})^{4s+2}}{2^{N(4s+1)}((2s+1)!)^2}\right)\leq \left(\hat{\psi}_{N,s}((x-\hat{x})/2^N)\right)^{2^N} \leq \exp(x-\hat{x}).\]	
		For $0\leq x-\hat{x} \leq \delta$, with properly chosen $N=O((4s+2)\log(\delta)+\log(1/((2s+1)!)^2\epsilon))$, we have 
		\[ \exp(x-\epsilon)\leq \exp(\hat{x})\overline{\psi}_{N,s}^{sft}(x) \leq \exp(x+\epsilon).\]
		Similarly, for $y\leq0$, according to the proof of Proposition \ref{prop_approx_exp_s}, we have
		\[	\exp\left(2^Ny-\frac{2^N3y^{2s-1}}{(2s+1)!}\right)\geq (\hat{\psi}_{N,s}(y))^{2^N} \geq \exp(2^Ny).\]
		Since $x=2^Ny+\hat{x}$, we can obtain
		\[	\exp\left((x-\hat{x})-\frac{3(x-\hat{x})^{2s-1}}{2^{N(2s-2)}(2s+1)!}\right)\geq \left(\hat{\psi}_{N,s}((x-\hat{x})/2^N)\right)^{2^N} \geq \exp(x-\hat{x}).\]
		For $-\delta\leq x-\hat{x} \leq 0$, with properly chosen $N=O((2s-1)\log(\delta)+\log(1/(2s+1)!\epsilon))$, we have 
		\[ \exp(x-\epsilon)\leq \exp(\hat{x})\overline{\psi}_{N,s}^{sft}(x) \leq \exp(x+\epsilon).\]
		Therefore, when $N=O((4s+2)\log(\delta)+\log(1/(2s+1)!\epsilon))$, we have the following approximation bound
		\[ \exp(x-\epsilon)\leq \exp(\hat{x})\overline{\psi}_{N,s}^{sft}(x) \leq \exp(x+\epsilon).\]
		
	}
	\QEDA
\end{proof}
\exclude{Note that we can choose $N=1.5\log(\delta/2\epsilon)$ such that $\exp(x-\epsilon)\leq \exp(\hat{x})\overline{\psi}_{N,1}^{sft}(x) \leq \exp(x+\epsilon).$ We will numerically illustrate that the shifting method can remarkably improve the approximation. 
	
	Since the exponential cone in the form of \eqref{eq_ec2} can be viewed as epigraph of perspective of the exponential function, the shifting results in Proposition \ref{prop_approx_exp_s_shift} can be directly applicable to the exponential cone \eqref{eq_ec2}.
}

\begin{theorem}\label{prop_approx_exp_cone2_s_shift}
	Suppose $\hat{\bm x}$ is an approximate solution such that $\left|x_3/x_2-\hat{x}_3/\hat{x}_2\right| \leq \delta$ 
	with $\delta>0$ and for any $\epsilon\in (0,\delta]$, let $N=O(\log(\delta/s)+\log(1/\epsilon)/s)$ and set $\hat{K}_{\overline{\psi},N,2s}^{sc,sft}:=\{\bm{x}\in \R^{2}\times \Re: (x_1/x_2,x_3/x_2)\in \epi(\overline{\psi}_{N,2s}^{sft}) \}$  such that
	$K_{\exp}(-\epsilon)\subseteq \hat{K}_{\overline{\psi},N,2s}^{sc,sft}\subseteq K_{\exp}(\epsilon).$ Particularly, when $s=1$, we have
	\[\hat{K}_{\overline{\psi},N,2}^{sc,sft}:=\left\{\bm{x}\in \R^{2}\times \Re: \begin{aligned}
		&r_k\geq 0, \forall k\in [N-1],\\
		&(2r_1\sqrt{\exp(\hat{x}_3/\hat{x}_2)},x_1-x_2,x_1+x_2)\in \L_3, \\
		&(2r_{k+1},r_{k}-x_2,r_{k}+x_2)\in \L_3, \forall k\in [N-1],\\
		&(x_2+2^{-N}(x_3-x_2(\hat{x}_3/\hat{x}_2)), r_N-x_2, r_N)\in \L_3\end{aligned}\right\}.\]
\end{theorem}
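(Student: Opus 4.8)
The plan is to reduce the cone-level statement to the scalar shifting bound of Proposition~\ref{prop_approx_exp_s_shift} through the perspective interpretation of the exponential cone \eqref{eq_ec2}, exactly in the spirit of Theorem~\ref{prop_approx_exp_cone2_s} and of the shifted limit-based analog Theorem~\ref{prop_approx_exp_cone2_shift}. First I would recall that, since $x_2\in\R=[1/M,M]$ is positive, membership $\bm{x}\in K_{\exp}(\alpha)$ is by \eqref{eq_ec2} equivalent to the scalar inequality $\exp(x_3/x_2-\alpha)\leq x_1/x_2$. Identifying the scalar argument with $x_3/x_2$ and the approximate solution with $\hat{x}_3/\hat{x}_2$, the hypothesis $|x_3/x_2-\hat{x}_3/\hat{x}_2|\leq\delta$ is precisely the condition $|x-\hat{x}|\leq\delta$ demanded by Proposition~\ref{prop_approx_exp_s_shift}. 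Hence, with $N=O(\log(\delta/s)+\log(1/\epsilon)/s)$, that proposition gives
\[
\exp(x_3/x_2-\epsilon)\;\leq\;\exp(\hat{x}_3/\hat{x}_2)\,\overline{\psi}_{N,2s}^{sft}(x_3/x_2)\;\leq\;\exp(x_3/x_2+\epsilon).
\]

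Next I would turn these two inequalities into the claimed inclusions. The set $\hat{K}_{\overline{\psi},N,2s}^{sc,sft}$ is defined so that $\bm{x}\in\hat{K}_{\overline{\psi},N,2s}^{sc,sft}$ is equivalent to $\exp(\hat{x}_3/\hat{x}_2)\,\overline{\psi}_{N,2s}^{sft}(x_3/x_2)\leq x_1/x_2$. If $\bm{x}\in K_{\exp}(-\epsilon)$ then $x_1/x_2\geq\exp(x_3/x_2+\epsilon)$, which combined with the upper bound above yields $\bm{x}\in\hat{K}_{\overline{\psi},N,2s}^{sc,sft}$; conversely, if $\bm{x}\in\hat{K}_{\overline{\psi},N,2s}^{sc,sft}$, the lower bound gives $\exp(x_3/x_2-\epsilon)\leq x_1/x_2$, i.e. $\bm{x}\in K_{\exp}(\epsilon)$. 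This establishes $K_{\exp}(-\epsilon)\subseteq\hat{K}_{\overline{\psi},N,2s}^{sc,sft}\subseteq K_{\exp}(\epsilon)$.

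For the explicit representation at $s=1$, I would homogenize the tower-of-variables SOC description of $\epi(\overline{\psi}_{N,2}^{sft})$ supplied by Proposition~\ref{prop_approx_exp_s_shift}, scaling by the positive variable $x_2$: each auxiliary $r_k$ and each constant ``$\pm 1$'' is replaced by its $x_2$-scaled counterpart, and the multiplicative factor $\exp(\hat{x}_3/\hat{x}_2)$ is carried into the first Lorentz-cone constraint as $\sqrt{\exp(\hat{x}_3/\hat{x}_2)}$. Because $\L_3$ is invariant under multiplication by $x_2>0$, this reproduces exactly the displayed system, and by the perspective-of-an-SOC-representable-set argument of \cite{ben2001lectures} the resulting set remains SOC representable. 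As in Theorem~\ref{prop_approx_exp_cone2_shift}, the bulk of this step is identical to the unshifted case, so I would cite that proof rather than repeat it.

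The part requiring the most care will be the homogenization bookkeeping: verifying that the $N$ successive squarings lower the power of $x_2$ in the correct pattern so that the chain collapses to $x_1/x_2\geq\exp(\hat{x}_3/\hat{x}_2)\,(1+2^{-N}(x_3/x_2-\hat{x}_3/\hat{x}_2))^{2^N}$, and in particular confirming that placing $\sqrt{\exp(\hat{x}_3/\hat{x}_2)}$ in the first cone makes it contribute exactly the multiplicative $\exp(\hat{x}_3/\hat{x}_2)$ after squaring, consistent with $r_N=x_2+2^{-N}(x_3-x_2(\hat{x}_3/\hat{x}_2))$. Everything else reduces verbatim to the one-dimensional bound and to the scaling argument already used in Theorem~\ref{prop_approx_exp_cone2_shift}.
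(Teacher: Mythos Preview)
Your proposal is correct and follows essentially the same approach as the paper, which simply states that the proof is similar to that of Theorem~\ref{prop_approx_exp_cone2_s} and omits the details. You have in fact written out precisely the argument the paper intends: reduce to the scalar bound of Proposition~\ref{prop_approx_exp_s_shift} via the perspective interpretation \eqref{eq_ec2}, then homogenize the tower-of-variables SOC description by scaling with $x_2>0$ and absorbing the $\exp(\hat{x}_3/\hat{x}_2)$ factor into the first Lorentz constraint.
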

\begin{proof} The proof is similar to that of Theorem \ref{prop_approx_exp_cone2_s} and thus is omitted.
	\exclude{For any $\epsilon >0$, $K_{\exp}(-\epsilon)\subseteq \hat{K}_{\overline{\psi},N,s}^{sc,sft}\subseteq K_{\exp}(\epsilon)$ is equivalent to
		\[\exp(x_3/x_2-\epsilon)\leq\exp(\hat{x}_3/\hat{x}_2)\overline{\psi}_{N,s}^{sft}(x_3/x_2)=\exp(\hat{x}_3/\hat{x}_2)(\hat{\psi}_{N,s}((x_3/x_2-\hat{x}_3/\hat{x}_2)/2^N))^{2^N} \leq \exp(x_3/x_2+\epsilon).\]
		According to Proposition \ref{prop_approx_exp_s_shift}, we can choose $N=O((4s+2)\log(\delta)+\log(1/(2s+1)!\epsilon))$ to satisfy the inequalities. 
		
	}
	\QEDA
\end{proof}

We remark that (i) the number of variables and SOC constraint (i.e., $N$) depends on the approximation accuracy $\epsilon$ and the quality of a given approximate solution. Thus, a better approximate solution reduces $N$; and (ii) in practice, to solve MIECP, there may exist many heuristic methods, and leveraging their near-optimal solutions can help us find an optimal solution more effectively. This phenomenon has been observed in our numerical study.

\subsection{Comparisons of Methods in Section  \ref{sec_exp}}

\begin{wrapfigure}{r}{0.50\textwidth}
	\vspace{-30pt}
	\begin{center}
		\includegraphics[width=0.48\textwidth]{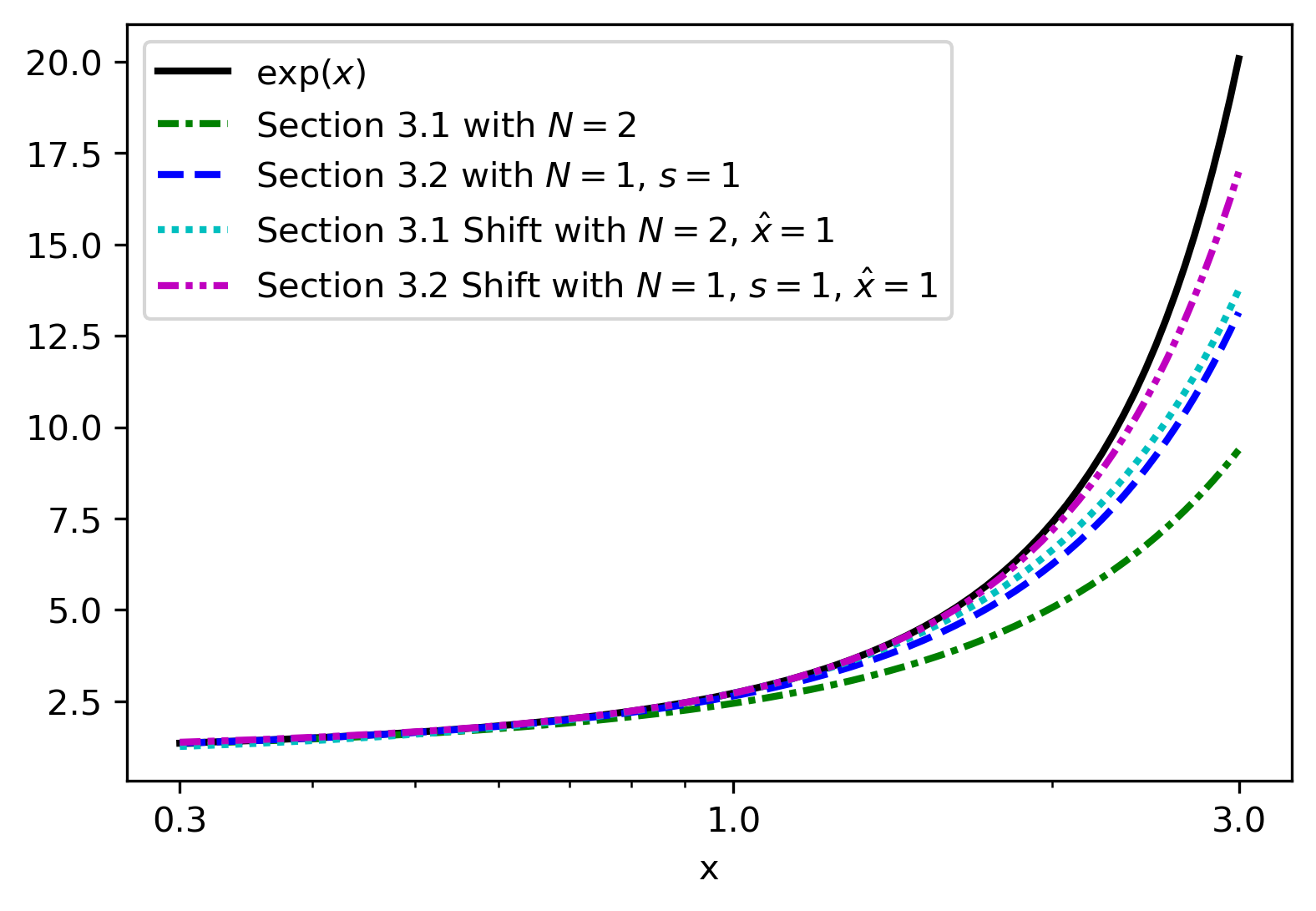}
		\vspace{-20pt}
	\end{center}
	\caption{A Comparison of Methods in Section \ref{sec_exp} using Two SOC Constraints to Approximate the Exponential Function in the Domain $[0.3,3.0]$}\label{fig_sec3_intro}
	\vspace{-50pt}
\end{wrapfigure}

The numerical comparison of Section \ref{sec_exp_lim}, Section \ref{sec_exp_taylor}, Section \ref{sec_exp_lim} Shift, and Section \ref{sec_exp_taylor} Shift methods to approximate the exponential function can be found in Figure \ref{fig_sec3_intro}, where we let $\hat{x}=1$ for the shifting methods. It is seen that the shifting methods can improve both Section \ref{sec_exp_lim} and Section \ref{sec_exp_taylor} methods. In practice, we can run heuristics to obtain a good-quality $\hat{x}$.


\section{Lower Bounds for Polyhedral and SOC Approximations in the Extended Space}\label{sec_bound}

This section focuses on deriving the minimum size (i.e., the minimum number of variables and constraints) of an extended polyhedron or an extended SOC program to approximate the exponential cone within an approximation accuracy.

To begin with, we first study the minimum size of an extended polyhedron $P_{q}$ with $q$ number of variables and  constraints to approximate the hypograph of $\log(x)$ with approximation accuracy $\epsilon$. 

\begin{theorem}\label{prop_poly_lower_bound}
	For any $\epsilon \in (0,1]$, any extended polyhedron $P_{q}$ with $q$ variables (including variables $x,\nu$ defined in \eqref{eq_log_approx}) and constraints such that $\hyp {\log}(-\epsilon)\subseteq \Proj_{(x,\nu)}(P_q )\subseteq \hyp {\log}(\epsilon)$. Then we must have $q=\Omega (\log\log(M)+\log(1/\epsilon))$.
\end{theorem}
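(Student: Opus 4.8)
The plan is to reduce the statement to a counting argument about the number of affine pieces on the boundary of the two-dimensional projection $Q:=\Proj_{(x,\nu)}(P_q)$. Since $P_q$ is a polyhedron and linear projection preserves polyhedrality and convexity, $Q$ is a convex polyhedron in $\Re^2$. The sandwich condition $\hyp\log(-\epsilon)\subseteq Q\subseteq\hyp\log(\epsilon)$ forces $Q$ to be nonempty over the whole interval $x\in\R=[1/M,M]$, so its upper boundary $g(x):=\sup\{\nu:(x,\nu)\in Q\}$ is a finite, concave, piecewise-linear function on $\R$ with $|g(x)-\log x|\le\epsilon$: indeed $(x,\log x-\epsilon)\in\hyp\log(-\epsilon)\subseteq Q$ gives $g(x)\ge\log x-\epsilon$, while $Q\subseteq\hyp\log(\epsilon)$ gives $g(x)\le\log x+\epsilon$. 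The number of affine pieces of $g$ is then at most the number of edges of $Q$, and I would bound this number from below using the curvature of $\log$ and from above using the size of the extended description.

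First I would lower bound the number of affine pieces of any such $g$ using strict concavity of $\log$. On a single piece over $[u,v]\subseteq\R$, $g$ coincides with an affine $\ell$, so $\ell(u)\le\log u+\epsilon$ and $\ell(v)\le\log v+\epsilon$, while at the midpoint $w=(u+v)/2$ we have $\log w-\epsilon\le g(w)=\ell(w)=\tfrac12(\ell(u)+\ell(v))\le\tfrac12(\log u+\log v)+\epsilon$. Writing $r=v/u$, this rearranges to $\log\cosh(\tfrac12\log r)\le 2\epsilon$, and a short computation shows $\log(v/u)\le c\sqrt{\epsilon}$ for an absolute constant $c$ (valid for all $\epsilon\in(0,1]$). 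Since the pieces tile $[1/M,M]$, whose total log-length is $\log M-\log(1/M)=2\log M$, the number of pieces is at least $\Omega(\log M/\sqrt{\epsilon})$.

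The key step is the upper bound coming from the extended formulation: $Q$ cannot have many edges. Writing $m\le q$ for the number of constraints defining $P_q$, every nonempty face of $P_q$ is the set on which a fixed subset of the $m$ inequalities is tight, so $P_q$ has at most $2^m$ faces. Because polyhedra are facially exposed, every face of $Q$ is the image under the projection of a face of $P_q$; assigning to each face of $Q$ one such preimage gives an injection into the faces of $P_q$, so $Q$ has at most $2^m\le 2^q$ faces, hence at most $2^q$ edges. Consequently $g$ has at most $2^q$ affine pieces. Combining the two bounds, $2^q\ge\Omega(\log M/\sqrt{\epsilon})$, whence $q\ge\log_2\bigl(\Omega(\log M/\sqrt{\epsilon})\bigr)=\Omega(\log\log M+\log(1/\epsilon))$, which is the claim.

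The hard part will be the upper-bound step: one must argue carefully that every face of the projection $Q$ is the image of a face of $P_q$, so that the $2^q$ bound on faces of the extended polyhedron transfers to $Q$, and one must count edges cleanly despite the possible unboundedness and degeneracy of $Q$ (only the finitely many upper-boundary edges over $[1/M,M]$ are needed, which keeps the accounting robust). The strict-concavity estimate of the second step is routine, but it is worth noting that the bound $\log(v/u)\le c\sqrt{\epsilon}$ must be justified uniformly on $\epsilon\in(0,1]$ rather than only in the asymptotic regime; this still suffices for the $\Omega(\cdot)$ conclusion.
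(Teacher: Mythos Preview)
Your approach is correct and follows the same high-level route as the paper: bound the number of linear pieces on the upper boundary of the two-dimensional projection by $2^{q}$, show each piece can only span an $x$-ratio of at most $1+O(\sqrt{\epsilon})$ via the strict concavity of $\log$, and then tile $[1/M,M]$. Where you differ is in execution. The paper counts extreme points of the projection (at most $2^{q}$) and, between two consecutive ones $(\bar x_i,\log\bar x_i+\delta_i)$ on the upper boundary, runs a four-case analysis on the signs of $\delta_1,\delta_2$ to obtain
\[
d_H \;\ge\; \tfrac12\Bigl[-\log\Bigl(\tfrac{\log t}{t-1}\Bigr)-1+\tfrac{\log t}{t-1}\Bigr] \;\ge\; 10^{-4}(t-1)^2,\qquad t=\bar x_2/\bar x_1,
\]
and hence $t\le 1+100\sqrt{\epsilon}$. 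Your face-preimage injection (each face of $Q$ pulls back to a distinct face of $P_q$, so $Q$ has at most $2^{m}\le 2^{q}$ edges) together with the midpoint inequality $\log\cosh\bigl(\tfrac12\log r\bigr)\le 2\epsilon$ reaches the same $\log r=O(\sqrt{\epsilon})$ bound without any case split. This is a genuine streamlining of the curvature step, not a different strategy; the face-counting and vertex-counting bounds on the projection are interchangeable here since in two dimensions the number of upper-boundary edges and vertices differ by at most one.
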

\begin{subequations}
	\begin{proof}
		Any extended polyhedron $P_{q}$ with $q$ number of variables and constraints has at most $2^q$ extreme points so is its projection $ \Proj_{(x,\nu)}(P_q )$. Now let us consider the two extreme points of $ \Proj_{(x,\nu)}(P_q )$, denoted by $(\bar{x}_1,\log(\bar{x}_1)+\delta_1),(\bar{x}_2,\log(\bar{x}_2)+\delta_2)$, which are consecutive in $x$-space such that $\bar{x}_1<\bar{x}_2$.  Since $\hyp {\log}(\epsilon)\subseteq \Proj_{(x,\nu)}(P_q )\subseteq \hyp {\log}(-\epsilon)$, we have 
		$$|\delta_1|\leq \epsilon, |\delta_2|\leq \epsilon$$
		Then, the largest distance in the $\nu$-direction between  $ \Proj_{(x,\nu)}(P_q )\cap \{(x,v): x\in [\bar{x}_1,\bar{x}_2]\}$ and set $ \hyp {\log}(\epsilon)\cap \{(x,v): x\in [\bar{x}_1,\bar{x}_2]\}$ can be lower bounded as
		\begin{align}
			d_H=\min_{\delta_1\in [-\epsilon, \epsilon],\delta_2\in [-\epsilon, \epsilon]}\max_{x\in [\bar{x}_1,\bar{x}_2]}\left|\log(x)-\frac{\log(\bar{x}_2)+\delta_2-\log(\bar{x}_1)-\delta_1}{\bar{x}_2-\bar{x}_1}(x-\bar{x}_1)-\log(\bar{x}_1)-\delta_1\right|.\label{eq_hausdoff}
		\end{align}
		In \eqref{eq_hausdoff}, without loss of generality, we can define $x:=x/\bar{x}_1$ and $t=\bar{x}_2/\bar{x}_1>1$. Thus, the optimization problem \eqref{eq_hausdoff} can be simplified as
		\begin{align}
			d_H=\min_{\delta_1\in [-\epsilon, \epsilon],\delta_2\in [-\epsilon, \epsilon]}\max_{x\in [1,t]}\left|\log(x)-\frac{\log(t)+\delta_2-\delta_1}{t-1}(x-1)-\delta_1\right|.\label{eq_hausdoff2}
		\end{align}	
		Note that to achieve the outer minimum in \eqref{eq_hausdoff2}, there are only four valid cases: (a) $\epsilon\geq \delta_1\geq 0\geq \delta_2 \geq -\epsilon$; (b) $\epsilon\geq \delta_2\geq 0\geq \delta_1\geq -\epsilon$; (c) $\epsilon\geq \delta_1,\delta_2\geq 0$; and (d) $0\geq \delta_1, \delta_2\geq -\epsilon$. Please see Figure \ref{LB_four} for an illustration.
		
		\begin{figure}[htbp]
			\centering
			\subfloat[0.48\textwidth][$\epsilon\geq \delta_1\geq 0\geq \delta_2 \geq -\epsilon$]{
				\centering
				\includegraphics[width=0.4\textwidth]{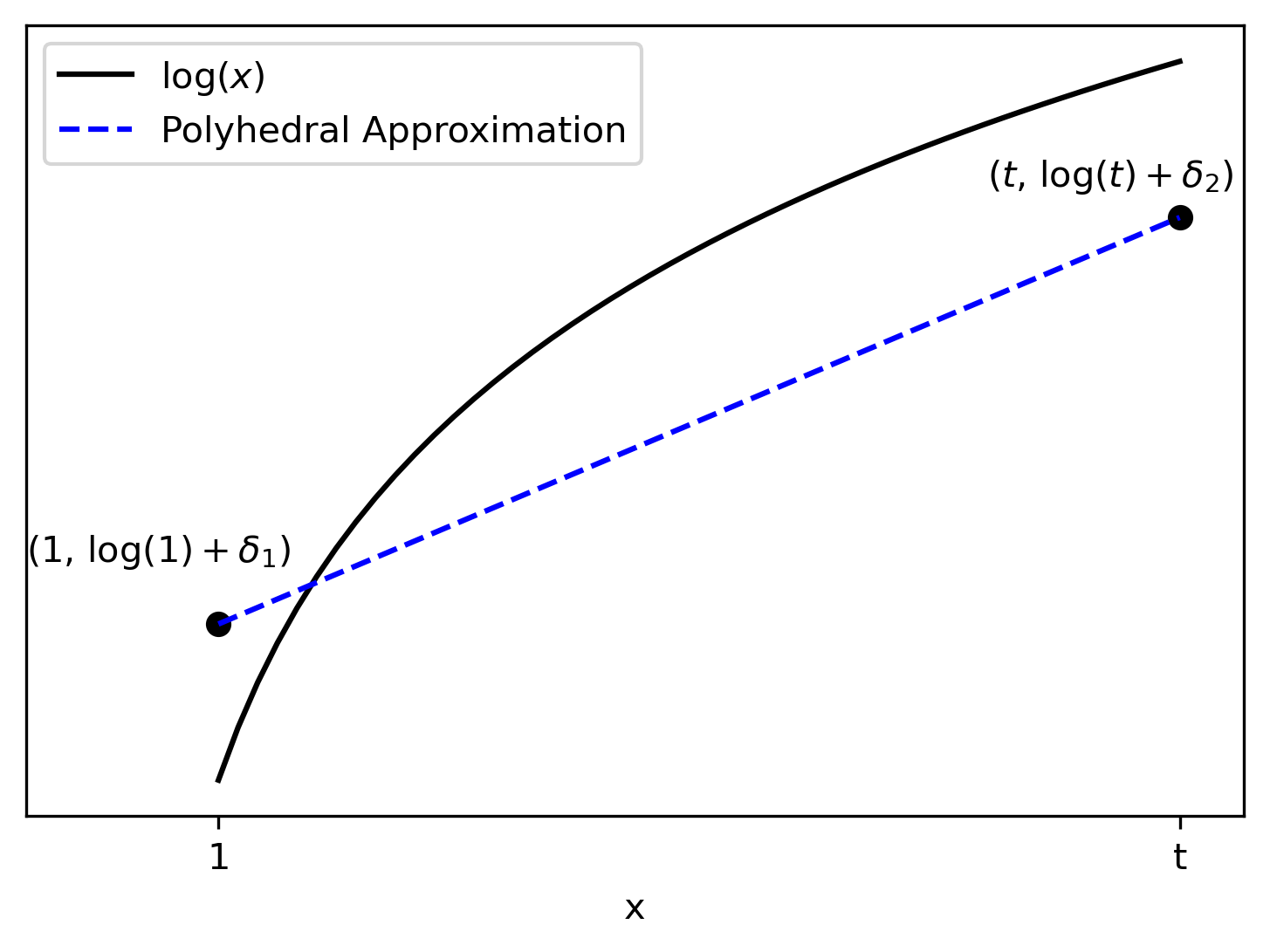} 
				\label{}
			}\hfill
			\subfloat[0.48\textwidth][$\epsilon\geq \delta_2\geq 0\geq \delta_1\geq -\epsilon$]{
				\centering
				\includegraphics[width=0.4\textwidth]{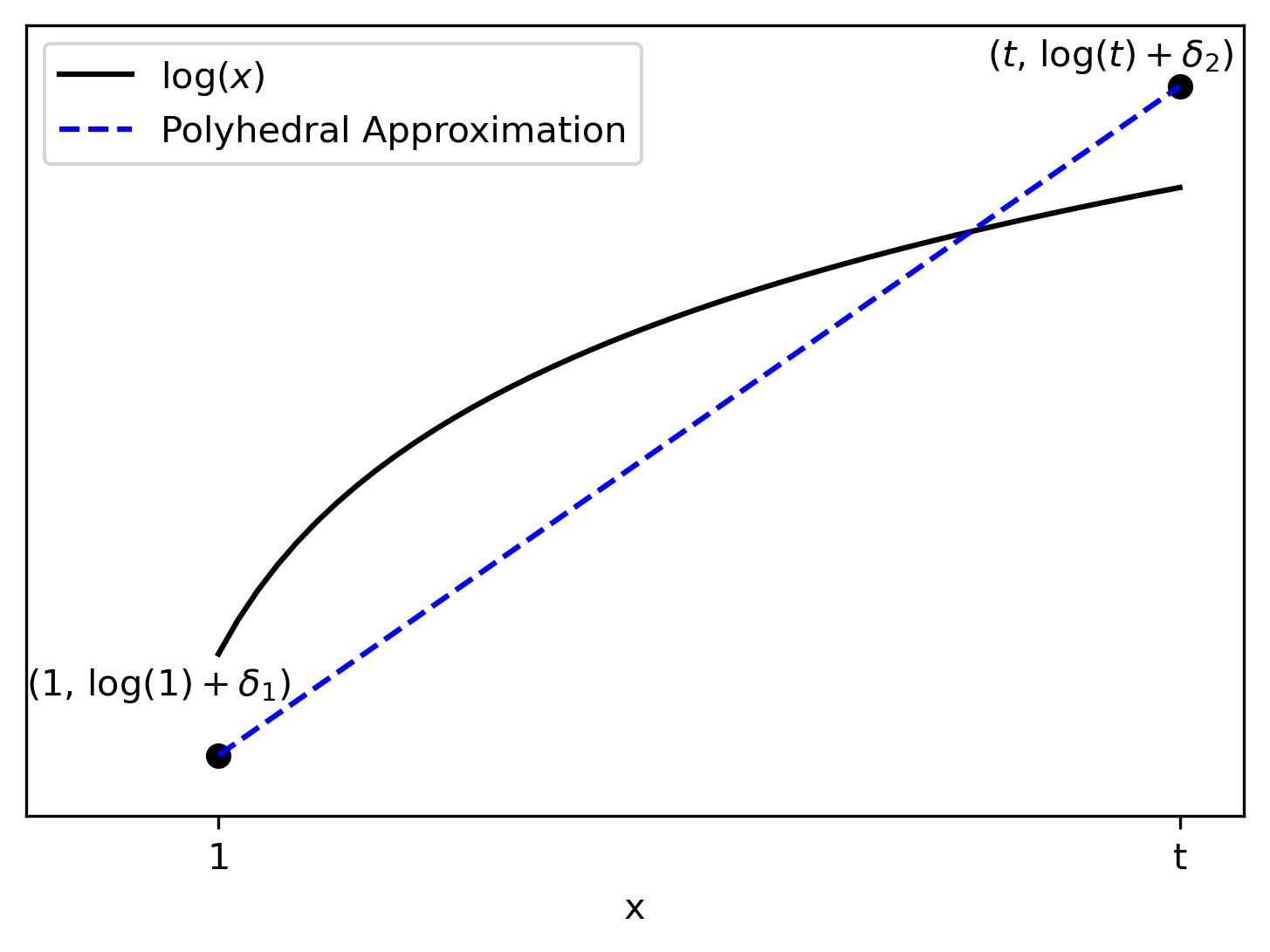} 
				\label{}
			}\hfill
			\subfloat[0.48\textwidth][$\epsilon\geq \delta_1,\delta_2\geq 0$]{
				\centering
				\includegraphics[width=0.4\textwidth]{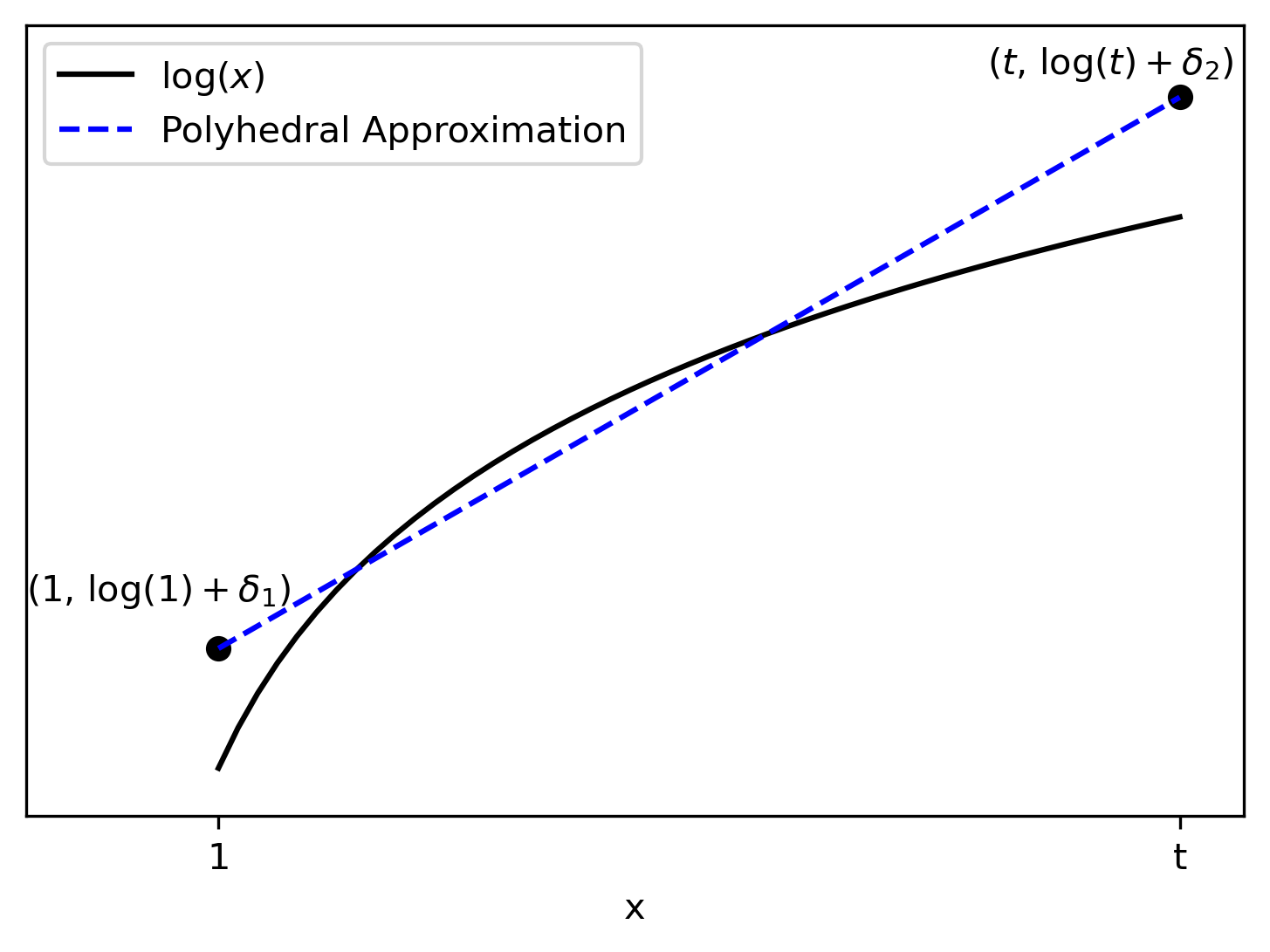} 
				\label{}
			}\hfill
			\subfloat[0.48\textwidth][$0\geq \delta_1, \delta_2\geq -\epsilon$]{
				\centering
				\includegraphics[width=0.4\textwidth]{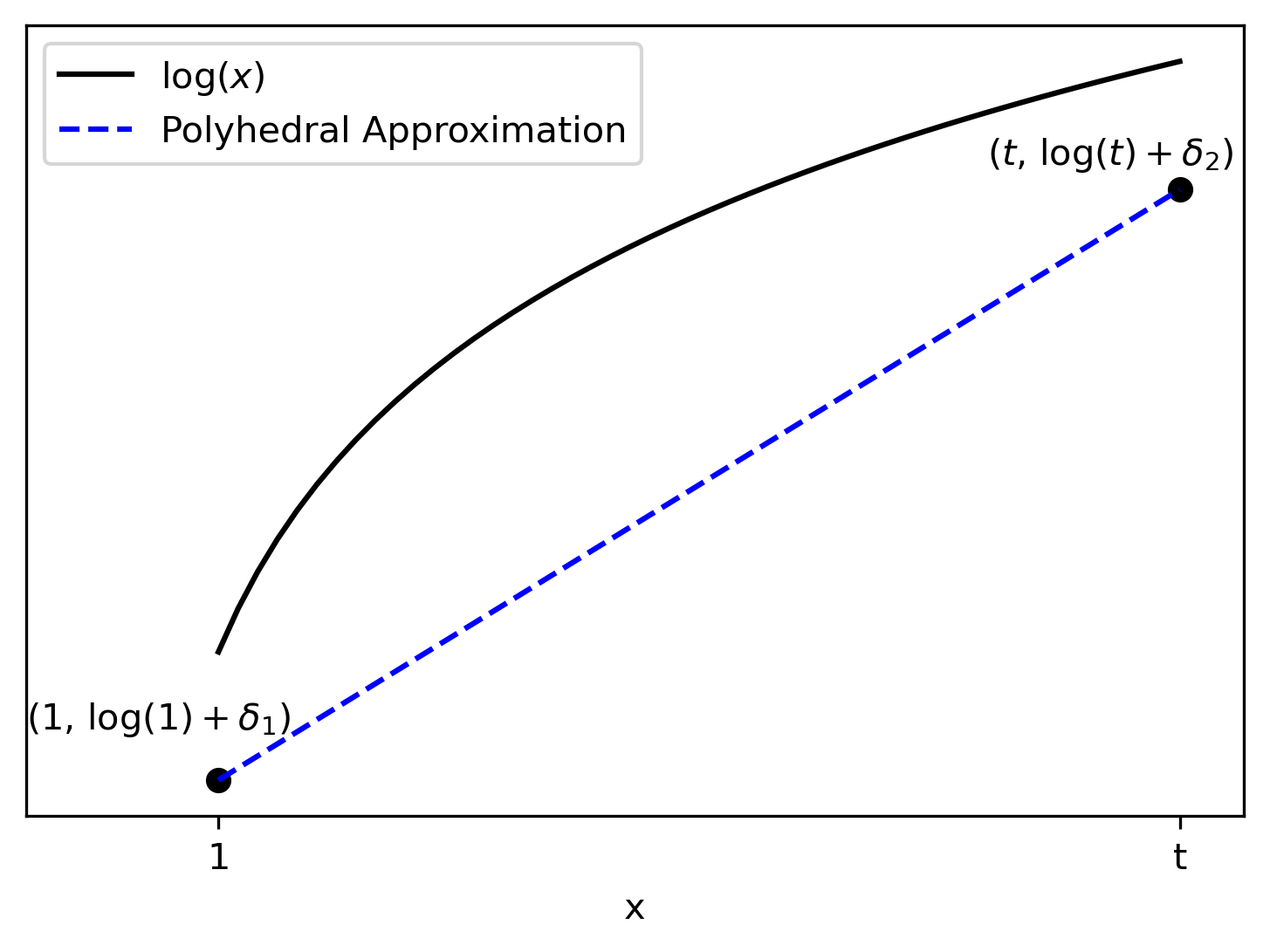} 
				\label{}}
			\caption{Four Cases for the Lower Bound of Polyhedral Approximation}\label{LB_four}
		\end{figure}
		
		\begin{itemize}
			\item[Case (a).] If $\epsilon\geq \delta_1\geq 0\geq \delta_2 \geq -\epsilon$, then \eqref{eq_hausdoff2} can be simplified as
			\begin{align*}
				d_{H1}=\min_{\begin{subarray}{c}
						\epsilon\geq \delta_1\geq 0\geq \delta_2 \geq -\epsilon
				\end{subarray}}\max\left\{\delta_1,-\delta_2,\max_{x\in [1,t]}\log(x)-\frac{\log(t)+\delta_2-\delta_1}{t-1}(x-1)-\delta_1\right\}.
			\end{align*}
			Taking the average of the first and third expressions in the first maximization operator, the value $d_{H1}$ can be lower bounded by
			\begin{align}
				d_{H1}&\geq \min_{\begin{subarray}{c}
						\epsilon\geq \delta_1\geq 0\geq \delta_2 \geq -\epsilon
				\end{subarray}}\frac{1}{2}\max_{x\in [1,t]}\left[\log(x)-\frac{\log(t)+\delta_2-\delta_1}{t-1}(x-1)\right]\notag\\
				&=\frac{1}{2}\max_{x\in [1,t]}\left[\log(x)-\frac{\log(t)}{t-1}(x-1)\right]\notag\\
				&=\frac{1}{2}\left[-\log\left(\frac{\log(t)}{t-1}\right)-1+\frac{\log(t)}{t-1}\right]\label{eq_hausdoff2_h1_1}
			\end{align}
			where the first equality is due to the monotonicity of the objective function with respect to $\delta_1,\delta_2$, and the second one is because of the optimality condition and concavity.
			\item[Case (b).] If $\epsilon\geq \delta_2\geq 0\geq \delta_1 \geq -\epsilon$, then \eqref{eq_hausdoff2} can be simplified as
			\begin{align*}
				d_{H2}=\min_{\begin{subarray}{c}
						\epsilon\geq \delta_2\geq 0\geq \delta_1 \geq -\epsilon
				\end{subarray}}\max\left\{-\delta_1,\delta_2,\max_{x\in [1,t]}\log(x)-\frac{\log(t)+\delta_2-\delta_1}{t-1}(x-1)-\delta_1\right\}.
			\end{align*}
			Taking the average of the second and third expressions in the first maximization operator, the value $d_{H2}$ can be lower bounded by
			\begin{align}
				d_{H2}&\geq \min_{\begin{subarray}{c}
						\epsilon\geq \delta_1\geq 0\geq \delta_2 \geq -\epsilon
				\end{subarray}}\frac{1}{2}\max_{x\in [1,t]}\left[\log(x)-\frac{\log(t)+\delta_2-\delta_1}{t-1}(x-1)-\delta_1+\delta_2\right]\notag\\
				&=\frac{1}{2}\max_{x\in [1,t]}\left[\log(x)-\frac{\log(t)}{t-1}(x-1)\right]\notag\\
				&=\frac{1}{2}\left[-\log\left(\frac{\log(t)}{t-1}\right)-1+\frac{\log(t)}{t-1}\right]\label{eq_hausdoff2_h1_2}
			\end{align}
			where the first equality is due to monotonicity of the objective function with respect to $\delta_1,\delta_2$, and the second one is because of the optimality condition and concavity.
			\item[Case (c).] If $\epsilon\geq \delta_1,\delta_2\geq 0$, then \eqref{eq_hausdoff2} can be simplified as
			\begin{align*}
				d_{H3}=\min_{\begin{subarray}{c}
						\epsilon\geq \delta_1,\delta_2\geq 0
				\end{subarray}}\max\left\{\delta_1,\delta_2,\max_{x\in [1,t]}\log(x)-\frac{\log(t)+\delta_2-\delta_1}{t-1}(x-1)-\delta_1\right\}.
			\end{align*}
			By symmetry, we must have $\delta_1=\delta_2$ at optimality. Thus, we have
			\begin{align*}
				d_{H3}=\min_{\begin{subarray}{c}
						\epsilon\geq \delta_1\geq 0
				\end{subarray}}\max\left\{\delta_1,\max_{x\in [1,t]}\log(x)-\frac{\log(t)}{t-1}(x-1)-\delta_1\right\}.
			\end{align*}
			Taking average of the first and second expressions in the first maximization operator, the value $d_{H3}$ can be lower bounded by
			\begin{align}
				d_{H3}&\geq \frac{1}{2}\max_{x\in [1,t]}\left[\log(x)-\frac{\log(t)}{t-1}(x-1)\right]=\frac{1}{2}\left[-\log\left(\frac{\log(t)}{t-1}\right)-1+\frac{\log(t)}{t-1}\right]\label{eq_hausdoff2_h1_3}
			\end{align}
			where the first equality  is because of the optimality condition and concavity.
			
			\item[Case (d).] If $0\geq \delta_1,\delta_2\geq -\epsilon$, then \eqref{eq_hausdoff2} can be simplified as
			\begin{align*}
				d_{H4}=\min_{\begin{subarray}{c}
						0\geq \delta_1,\delta_2\geq -\epsilon
				\end{subarray}}\max\left\{-\delta_1,-\delta_2,\max_{x\in [1,t]}\log(x)-\frac{\log(t)+\delta_2-\delta_1}{t-1}(x-1)-\delta_1\right\}.
			\end{align*}
			Since the first maximization is monotone non-increasing with respect to $\delta_1,\delta_2$, thus the outer minimization is achieved by $\delta_1=0,\delta_2=0$. Then we have
			\begin{align}
				d_{H4}=\max_{x\in [1,t]}\log(x)-\frac{\log(t)}{t-1}(x-1)=-\log\left(\frac{\log(t)}{t-1}\right)-1+\frac{\log(t)}{t-1}\label{eq_hausdoff2_h1_4}
			\end{align}
			where the second equality is because of the optimality condition and concavity.
		\end{itemize}
		
		Thus, we have 
		\begin{align*}
			d_H=\min_{i\in [4]}\{d_{Hi}\}\geq \frac{1}{2}\left[-\log\left(\frac{\log(t)}{t-1}\right)-1+\frac{\log(t)}{t-1}\right]\geq  \frac{1}{10^4}(t-1)^2		
		\end{align*}
		for any $t>1$ and $d_H\leq \epsilon \leq 1$.
		Hence, to achieve $\hyp {\log}(\epsilon)\subseteq \Proj_{(x,\nu)}(P_q )\subseteq \hyp {\log}(-\epsilon)$, we must have $10^{-4}(t-1)^2\leq \epsilon$, i.e., $t=\bar{x}_2/\bar{x}_1\leq 1+\sqrt{10^4\epsilon}$. 
		
		Since $(\bar{x}_1,\log(\bar{x}_1)+\delta_1),(\bar{x}_2,\log(\bar{x}_2)+\delta_2)$  are two arbitrarily consecutive extreme points, for all the extreme points, we must have
		\[(1+\sqrt{10^4\epsilon})^{2^q}\geq M/(1/M):=M^2\]
		i.e., $q=\Omega (\log\log(M)+\log(1/\epsilon))$.\QEDA
	\end{proof}
\end{subequations}


We notice that the exponential cone can be viewed as the hypograph of the perspective of the logarithm function. Thus, the lower bound result in Theorem \ref{prop_poly_lower_bound} can be directly applied to that of polyhedral approximation of the exponential cone.
\begin{corollary}
	For any $\epsilon\in (0,1]$, given a polyhedral approximation $P_q$ with $q$ additional variables and linear constraints such that $K_{\exp}(-\epsilon)\subseteq P_q \subseteq K_{\exp}(\epsilon)$, we must have $q=\Omega (\log\log(M)+\log(1/\epsilon))$.
\end{corollary}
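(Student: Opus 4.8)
The plan is to reduce the corollary directly to Theorem~\ref{prop_poly_lower_bound} by slicing the exponential cone at the perspective value $x_2=1$. The starting observation is the representation \eqref{eq_ec1}, namely $K_{\exp}(\alpha)=\{\bm{x}\in\R^2\times\Re:(x_1/x_2,x_3/x_2)\in\hyp\log(\alpha)\}$. Since $1\in[1/M,M]=\R$, the hyperplane $\{x_2=1\}$ meets the feasible domain, and on this slice the perspective trivializes: $K_{\exp}(\alpha)\cap\{x_2=1\}=\{(x_1,1,x_3):(x_1,x_3)\in\hyp\log(\alpha)\}$. Thus, after identifying $x_1$ with the hypograph abscissa $x$ and $x_3$ with the ordinate $\nu$ of \eqref{eq_log_approx}, the slice of $K_{\exp}(\alpha)$ is an exact copy of $\hyp\log(\alpha)$ in the $(x_1,x_3)$-plane.

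First I would take the given polyhedral approximation $P_q$ satisfying $K_{\exp}(-\epsilon)\subseteq P_q\subseteq K_{\exp}(\epsilon)$ and intersect the whole chain with $\{x_2=1\}$, which preserves every inclusion. Setting $\tilde{P}:=\Proj_{(x_1,x_3)}(P_q\cap\{x_2=1\})$ and using the slice identity above yields $\hyp\log(-\epsilon)\subseteq\tilde{P}\subseteq\hyp\log(\epsilon)$, which is exactly the configuration required by Theorem~\ref{prop_poly_lower_bound}.

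Next I would account for the size of $\tilde{P}$. Imposing the single equality $x_2=1$ permits eliminating the variable $x_2$ by substitution; this introduces no new variables and does not increase the number of linear constraints, so $\tilde{P}$ is an extended polyhedron whose total count of variables (now including $x_1,x_3$ in the roles of $x,\nu$) and constraints is at most $q+O(1)$. Applying Theorem~\ref{prop_poly_lower_bound} to $\tilde{P}$ then gives $q+O(1)=\Omega(\log\log(M)+\log(1/\epsilon))$, whence $q=\Omega(\log\log(M)+\log(1/\epsilon))$, as claimed.

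The step I expect to demand the most care is the bookkeeping in the reduction: one must check that fixing $x_2=1$ and projecting out the auxiliary variables leaves the asymptotic count of variables plus constraints unchanged up to an additive constant, so that the lower bound of Theorem~\ref{prop_poly_lower_bound} transfers with no loss. Everything else is an immediate consequence of the perspective structure of $K_{\exp}$ together with the already-established lower bound for $\hyp\log$.
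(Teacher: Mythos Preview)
Your proposal is correct and is essentially the same approach the paper takes: the paper simply remarks that $K_{\exp}$ is the hypograph of the perspective of $\log$ and that Theorem~\ref{prop_poly_lower_bound} therefore applies directly, whereas you spell out the slice-at-$x_2=1$ reduction and the size bookkeeping that this one-line remark leaves implicit. Nothing in your argument deviates from the paper's intended route.
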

In \cite{ben2001polyhedral}, the authors showed a lower bound of any polyhedral approximation of an SOC program is $\Omega(\log(1/\epsilon))$. Together with the lower bound result in Theorem \ref{prop_poly_lower_bound}, we have that to obtain an $\epsilon$- SOC approximation of the exponential cone, it requires $\Omega(1+\log\log(M)/\log(1/\epsilon))$ number of variables and SOC constraints. This lower bound result is summarized below.
\begin{corollary}
	For any $\epsilon\in (0,1]$, given an SOC approximation $C_q$ with $q$ number of variables and SOC constraints such that $K_{\exp}(-\epsilon)\subseteq C_q \subseteq K_{\exp}(\epsilon)$, we have \allowbreak$q=\Omega (1+\log\log(M)/\log(1/\epsilon))$.
\end{corollary}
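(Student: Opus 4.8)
The plan is to reduce this SOC lower bound to the polyhedral lower bound of the preceding corollary, using the polyhedral-approximation-of-SOC machinery of \cite{ben2001polyhedral} in the direction already recorded in the Setting, namely that an SOC approximation can be turned into a polyhedral one at the cost of only a $\log(1/\epsilon)$ blow-up in size. Starting from an arbitrary SOC approximation $C_q$ with $q$ variables and $\L_3$ constraints satisfying $K_{\exp}(-\epsilon)\subseteq C_q \subseteq K_{\exp}(\epsilon)$, I would first convert it into a polyhedral approximation of the exponential cone of comparable accuracy but with size inflated by only a factor $O(\log(1/\epsilon))$, and then invoke the polyhedral lower bound for the exponential cone to pin down $q$.

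Concretely, I would replace each of the $O(q)$ three-dimensional Lorentz-cone constraints defining $C_q$ by its polyhedral approximation from \cite{ben2001polyhedral}, which uses $O(\log(1/\delta))$ additional variables and linear inequalities per cone to sandwich $\L_3$ between itself and a $(1+\delta)$-dilation. Taking $\delta$ to be a fixed small multiple of $\epsilon$, the resulting polyhedron $P_{q'}$ has $q'=O(q\log(1/\epsilon))$ variables and inequalities and still satisfies $K_{\exp}(-c\epsilon)\subseteq \Proj_{(x,\nu)}(P_{q'})\subseteq K_{\exp}(c\epsilon)$ for an absolute constant $c>0$; this is precisely the relation $n^p+m^p=O((n^s+m^s)\log(1/\epsilon))$ stated in the Setting. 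Applying the preceding corollary to $P_{q'}$, with $c\epsilon$ in place of $\epsilon$ (which does not change the asymptotic order since $\log(1/(c\epsilon))=\Theta(\log(1/\epsilon))$), forces $q'=\Omega(\log\log(M)+\log(1/\epsilon))$. Chaining the two estimates gives $q\log(1/\epsilon)=\Omega(\log\log(M)+\log(1/\epsilon))$, and dividing through by $\log(1/\epsilon)$ yields $q=\Omega\!\left(1+\log\log(M)/\log(1/\epsilon)\right)$, as claimed.

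The step I expect to be the main obstacle is the accuracy bookkeeping in the conversion: I must verify that the per-cone multiplicative $(1+\delta)$-error from \cite{ben2001polyhedral}, once composed across all $q$ Lorentz cones and pulled back through the nonlinear logarithmic map defining $K_{\exp}$ in \eqref{eq_ec1}, still yields an $O(\epsilon)$-accurate approximation in the additive $\alpha$-sense, all while keeping each per-cone polyhedron of size only $O(\log(1/\epsilon))$. Since this is exactly the content imported from \cite{ben2001polyhedral} and summarized in the Setting, the remaining work reduces to confirming that replacing $\epsilon$ by the constant multiple $c\epsilon$ leaves the $\Omega(\cdot)$ conclusion intact, which it does.
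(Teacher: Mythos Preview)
Your proposal is correct and follows essentially the same route as the paper: convert the SOC approximation $C_q$ into a polyhedral one of size $O(q\log(1/\epsilon))$ via the Ben-Tal--Nemirovski construction recorded in the Setting, apply the polyhedral lower bound $\Omega(\log\log(M)+\log(1/\epsilon))$ from the preceding corollary, and divide. The paper's own argument is the one-sentence remark preceding the corollary; your write-up simply fleshes out the same chain of inequalities and is, if anything, more careful about the accuracy bookkeeping than the paper itself.
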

This corollary shows that the minimum number of SOC constraints needed to approximate the exponential cone may be constant, given that $\log(M)$ is constant. Meanwhile, according to Section \ref{sec_approx_ec_exmps}, if there exists a tight approximate solution, then we can also obtain a constant number of variables and SOC constraints using Example \ref{emp_3} to achieve $\epsilon-$approximation accuracy; otherwise, according to the discussion of Example \ref{emp_2} in Section \ref{sec_approx_ec_exmps} or Theorem \ref{prop_approx_exp_cone2_s} with $N=s$, the best we can achieve is $O(\sqrt{\log(1/\epsilon)})$ number of additional variables and second-order constraints. However, we are unable to further improve the lower or upper bounds and leave them to interested readers.

\section{Approximating the Exponential Cone in the Original Space Using Gradient Inequalities }\label{sec_outer}

This section will study the polyhedral outer approximation of the exponential cone in the original space based on gradient inequalities. We study the upper and lower bounds for the approximation errors. 

We first study the outer approximation of the hypograph of $\log(x)$, i.e., $\hyp\log(0)$. Given $N$ points $\{\hat{x}_i\}_{i\in [N]}$ (including two boundary points of $\R$ for the sake of simplicity) in the approximation, the outer approximation, denoted by $\hat{H}_{N}^o$, admits the following form
\begin{align}\label{eq_outer}
	\hat{H}_{N}^o=\left\{(x,\nu)\in \Re\times \Re: \log(\hat{x}_i)+\frac{1}{\hat{x}_i}(x-\hat{x}_i)\geq \nu\right\}.
\end{align}
Since $\log(x)$ is a concave function, we have $\hat{H}_{N}^o\subseteq \hyp\log(0)$. We observe that the approximation error of polyhedral outer approximation depends on the maximum distance between the extreme points and logarithm function. Based on this observation, we can find a tight approximation bound in the $\nu$ direction, and the result is summarized below.  
\begin{theorem}\label{prop_outer_bound}
	For any $\epsilon \in [0,1]$, there exists a polyhedral outer approximation in the original space with $N=\Theta(\log(M)+1/\sqrt{\epsilon})$ points such that $\hyp {\log}(0)\subseteq \hat{H}_{N}^o \subseteq \hyp {\log}(\epsilon)$. 
\end{theorem}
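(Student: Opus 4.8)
The plan is to verify the two set inclusions separately, the left one being essentially free and the right one carrying all the work. Since $\log$ is concave, every tangent line lies weakly above the graph, so each defining inequality of $\hat{H}_{N}^o$ is implied by $\nu\le\log(x)$; hence $\hyp\log(0)\subseteq \hat{H}_{N}^o$ holds for \emph{any} choice of the nodes $\{\hat{x}_i\}$, independent of $N$. All the content is therefore in placing the nodes so that $\hat{H}_{N}^o\subseteq\hyp\log(\epsilon)$, i.e. so that the lower envelope $x\mapsto\min_i\bigl[\log(\hat{x}_i)+(x-\hat{x}_i)/\hat{x}_i\bigr]$ of the tangent lines never exceeds $\log(x)+\epsilon$ on $\R$.

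First I would reduce this uniform bound to a per-interval estimate. Order the nodes $\hat{x}_1=1/M<\cdots<\hat{x}_N=M$. On a consecutive interval $[\hat{x}_i,\hat{x}_{i+1}]$ the envelope is the minimum of the two endpoint tangents; because the left tangent is steeper, the two lines cross once in the interior, the envelope matches $\log$ (gap $0$) at both endpoints, and the largest gap occurs exactly at the crossing abscissa. After the scale-reduction $x\mapsto x/\hat{x}_i$ with $t:=\hat{x}_{i+1}/\hat{x}_i$, a direct computation gives the closed form
\[
E(t)=\frac{\log t}{t-1}-1-\log\!\Big(\frac{\log t}{t-1}\Big),
\]
which is exactly the expression appearing as $d_{H4}$ in \eqref{eq_hausdoff2_h1_4}. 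In particular the error on each interval depends only on the ratio $t$, reflecting the scaling invariance $\log(cx)=\log c+\log x$.

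Next I would bound $E(t)$ two-sidedly. The lower bound $E(t)\ge\tfrac{1}{10^4}(t-1)^2$ is already available from the proof of Theorem \ref{prop_poly_lower_bound}, while a short Taylor-plus-monotonicity argument gives the matching upper bound $E(t)\le\tfrac18(t-1)^2$ for all $t>1$. Consequently, enforcing $E(t)\le\epsilon$ on every interval is equivalent, up to constants, to capping each ratio at $t\le 1+\Theta(\sqrt{\epsilon})$ (truncated to a fixed constant once $\epsilon$ is large). The two-sided estimate on $E$ is what simultaneously makes the construction feasible (upper bound) and makes the resulting count tight (lower bound), yielding the asserted $\Theta(\cdot)$; the identity $E(t)=d_{H4}(t)$ lets me recycle the lower-bound analysis of Theorem \ref{prop_poly_lower_bound} almost verbatim for the tightness half.

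The step I expect to be the main obstacle is the final counting. A single geometric grid of ratio $1+\Theta(\sqrt{\epsilon})$ covers the multiplicative range $\hat{x}_N/\hat{x}_1=M^2$ in $\Theta\bigl(\log(M)/\sqrt{\epsilon}\bigr)$ intervals, i.e. it produces a \emph{product} rather than the claimed sum. To reach $N=\Theta(\log(M)+1/\sqrt{\epsilon})$ one cannot use a uniform ratio: the plan would be to split $\R$ into a bounded ``core'' around $x=1$, where the $\Theta(1/\sqrt{\epsilon})$ refinement is concentrated, and two tails handled by a coarser grid contributing $\Theta(\log M)$, then argue that the two contributions add rather than multiply while the total envelope error stays below $\epsilon$. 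Since $E(t)$ is ratio-invariant, controlling the tail error to $\epsilon$ under a coarse ratio is precisely the delicate point, so the genuine work lies in designing this non-uniform placement and in checking it against the recycled lower-bound computation; the inclusions and the closed form for $E(t)$ are by comparison routine.
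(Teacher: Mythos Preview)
Your derivation up through the closed form $E(t)$ and the two-sided quadratic bounds is essentially identical to the paper's argument; the paper likewise uses the uniform geometric grid $\hat{x}_{i+1}/\hat{x}_i = 1+\sqrt{8\epsilon}$ and recycles the lower bound of Theorem~\ref{prop_poly_lower_bound} (replacing $2^{q}$ by $N$). Where you diverge is at the counting step, and there your instinct is sharper than the paper's: from $(1+\sqrt{8\epsilon})^{N-1}\ge M^{2}$ the paper simply writes ``i.e., $N=O(\log(M)+1/\sqrt{\epsilon})$'' without further comment, whereas you correctly observe that this inequality actually forces $N=\Theta\bigl(\log(M)/\sqrt{\epsilon}\bigr)$, a product rather than a sum.

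Your proposed rescue via a core/tail non-uniform grid cannot succeed, and you have already isolated the reason. The single-tangent error at $x$ from a node $\hat x$ equals $u-1-\log u$ with $u=x/\hat x$, so every node, \emph{wherever it sits in $[1/M,M]$}, controls the error to level $\epsilon$ only on a multiplicative neighborhood of radius $\Theta(\sqrt{\epsilon})$. Covering the full multiplicative range $M^{2}$ therefore requires $\Omega\bigl(\log(M)/\sqrt{\epsilon}\bigr)$ nodes regardless of their placement; the tails are no easier than the core, and no split converts the product into a sum. The ``delicate point'' you flag is thus not merely delicate but fatal to the additive bound: the tight count is $N=\Theta\bigl(\log(M)/\sqrt{\epsilon}\bigr)$, which is exactly what the paper's own construction and lower-bound recycling deliver once the final arithmetic step is carried out correctly. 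The additive form $\Theta(\log(M)+1/\sqrt{\epsilon})$ in the statement holds only when one of the two parameters is bounded.
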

\begin{proof}
	We observe that except the boundary points, there are $N-1$ extreme points in set $\hat{H}_{N}^o$. Now let us consider its two neighboring points $(\bar{x}_1,\log(\bar{x}_1)),(\bar{x}_2,\log(\bar{x}_2))$ which are consecutive in $x$-space and satisfy $\bar{x}_1<\bar{x}_2$. Then, the largest distance on the $\nu$-direction between $ \hat{H}_{N}^o \cap \{(x,\nu): x\in [\bar{x}_1,\bar{x}_2]\}$ and set $ \hyp {\log}(0)\cap \{(x,\nu): x\in [\bar{x}_1,\bar{x}_2]\}$ is
	\begin{align}
		d=\frac{1}{\bar{x}_1}(x^*-\bar{x}_1)+\log(\bar{x}_1)-\log(x^*),\label{eq_distance}
	\end{align}
	where $x^*=\log(\bar{x}_2/\bar{x}_1)/(1/\bar{x}_1-1/\bar{x}_2)$ is the $x$-coordinate of the intersection point of two lines $y=(x-\bar{x}_1)/\bar{x}_1+\log(\bar{x}_1)$ and $(x-\bar{x}_2)/\bar{x}_2+\log(\bar{x}_2).$ Please see Figure \ref{UB_outer} for an illustration.
	
	\begin{wrapfigure}{r}{0.50\textwidth}
		\vspace{-10pt}
		\begin{center}
			\includegraphics[width=0.48\textwidth]{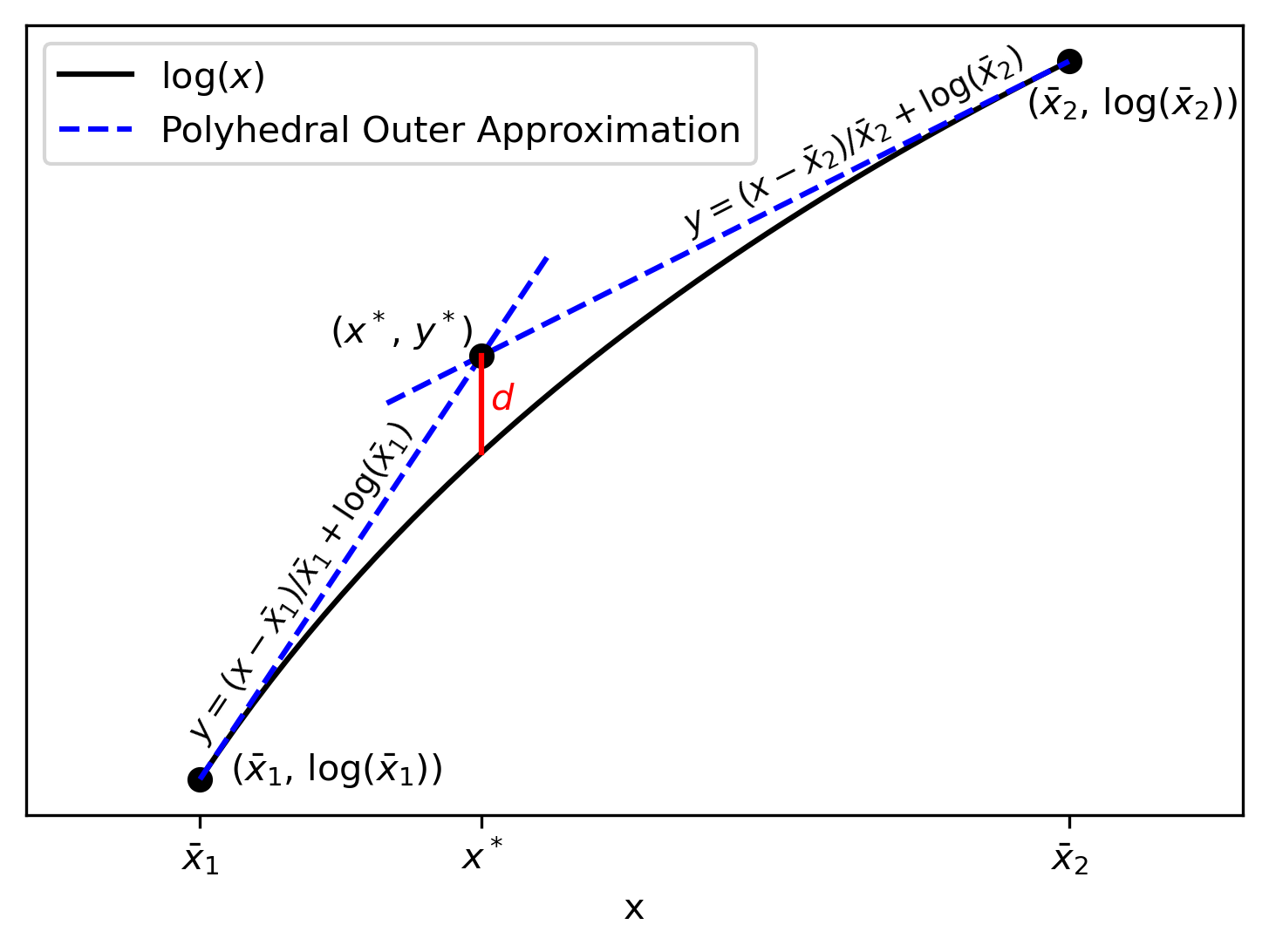} 
			\vspace{-20pt}
		\end{center}
		\caption{Illustration of the Polyhedral Outer Approximation Error}\label{UB_outer}
		\vspace{-10pt}
	\end{wrapfigure}
	
	To simplify \eqref{eq_distance}, we define $t=\bar{x}_2/\bar{x}_1>1$. Thus, the distance formula \eqref{eq_distance} can be simplified as
	$$d=-\log\left(\frac{\log(t)}{t-1}\right)-1+\frac{\log(t)}{t-1}\leq \frac{1}{8}(t-1)^2,$$
	and the inequality holds for any $t>1$.
	Hence, to achieve $\hyp {\log}(-\epsilon)\subseteq \hat{H}_{N} \subseteq \hyp {\log}(0)$, it suffices to let $t=\bar{x}_2/\bar{x}_1\leq 1+\sqrt{8\epsilon}$.
	Since $(\bar{x}_1,\log(\bar{x}_1)),(\bar{x}_2,\log(\bar{x}_2))$ are two arbitrarily consecutive points, let us choose set $\{\bar{x}_i\}_{i\in [N]}$ as $\hat{x}_{i+1}/\hat{x}_{i}=1+\sqrt{8\epsilon}$ for all $i\in[N-1]$. 
	Then we must have
	\[(1+\sqrt{8\epsilon})^{N-1}\geq M/(1/M):=M^2\]
	i.e., $N=O(\log(M)+1/\sqrt{\epsilon})$.
	Following the proof in Theorem \ref{prop_poly_lower_bound} by replacing $2^q$ by $N$, we can obtain that the smallest $N=\Omega(\log(M)+1/\sqrt{\epsilon})$. Thus, we must have $N=\Theta(\log(M)+1/\sqrt{\epsilon})$. \QEDA
\end{proof}
Given that $\log(M)$ is constant, Theorem \ref{prop_outer_bound} shows the best outer approximations in the original space that can achieve $\epsilon$-approximation accuracy require $N=O(1/\sqrt{\epsilon})$. Different from the results in the other sections, $N=O(1/\sqrt{\epsilon})$ can be significantly larger than $N=O(\log(1/\epsilon))$ or $N=O(\sqrt{\log(1/\epsilon)})$. However, in our numerical study, we find that the outer approximations work very well especially having mixed-integer variables, which may be because the solvers are better at solving MILPs compared to MISOCPs.  

Since the exponential cone can be viewed as the hypograph of perspective of the logarithm function, the approximation result also holds for the exponential cone. The upper and lower bounds results in Theorem \ref{prop_outer_bound} can be directly applied to the polyhedral outer approximation of the exponential cone.
\begin{corollary}
	For any $\epsilon \in [0,1]$, there exists a polyhedral outer approximation in the original space with $\Theta(\log(M)+1/\sqrt{\epsilon})$ points such that $K_{\exp}(0)\subseteq \hat{K}_{N}^o \subseteq K_{\exp}(\epsilon)$, where $\hat{K}_{N}^o:=\{\bm{x}\in \R^{2}\times \Re: (x_1/x_2,x_3/x_2)\in \hat{H}_{N}^o \}$.
\end{corollary}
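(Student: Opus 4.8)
The plan is to obtain this corollary as a direct homogenization (perspective lifting) of Theorem~\ref{prop_outer_bound}, exploiting the fact that the exponential cone \eqref{eq_ec1} is precisely the hypograph of the perspective of $\log$. The entire approximation content already lives in Theorem~\ref{prop_outer_bound}; the corollary only has to transport the one-dimensional containment chain $\hyp\log(0)\subseteq \hat{H}_N^o\subseteq \hyp\log(\epsilon)$ up to the three-dimensional cone via the coordinate map $\bm{x}\mapsto (x_1/x_2,x_3/x_2)$.

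First I would record the two defining equivalences that make the lift tautological. By \eqref{eq_ec1}, for any $\alpha$ we have $\bm{x}\in K_{\exp}(\alpha)$ iff $(x_1/x_2,x_3/x_2)\in \hyp\log(\alpha)$; and by the definition of $\hat{K}_N^o$ in the statement, $\bm{x}\in \hat{K}_N^o$ iff $(x_1/x_2,x_3/x_2)\in \hat{H}_N^o$. Because $x_2\in \R=[1/M,M]$ is bounded away from zero, the map $\bm{x}\mapsto (x_1/x_2,x_3/x_2)$ is well defined on the whole domain, so these equivalences hold pointwise. Applying Theorem~\ref{prop_outer_bound} with $N=\Theta(\log(M)+1/\sqrt{\epsilon})$ points then yields both inclusions at once: if $\bm{x}\in K_{\exp}(0)$ then $(x_1/x_2,x_3/x_2)\in \hyp\log(0)\subseteq \hat{H}_N^o$, whence $\bm{x}\in \hat{K}_N^o$; and if $\bm{x}\in \hat{K}_N^o$ then $(x_1/x_2,x_3/x_2)\in \hat{H}_N^o\subseteq \hyp\log(\epsilon)$, whence $\bm{x}\in K_{\exp}(\epsilon)$.

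Second I would verify that $\hat{K}_N^o$ really is a polyhedron in the original variables, not merely the preimage of one. Substituting $(x,\nu)=(x_1/x_2,x_3/x_2)$ into each gradient inequality $\log(\hat{x}_i)+\frac{1}{\hat{x}_i}(x-\hat{x}_i)\geq \nu$ from \eqref{eq_outer} and clearing the denominator $x_2>0$ yields the linear inequality $x_2\log(\hat{x}_i)+\frac{1}{\hat{x}_i}(x_1-\hat{x}_i x_2)\geq x_3$; these $N$ inequalities give $\hat{K}_N^o$ an explicit description in the original space. Multiplying by $x_2$ preserves the sense of the inequality precisely because $x_2\geq 1/M>0$, which is where the boundedness of the domain $\R$ is used a second time.

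I do not expect any genuine obstacle here; the single point requiring a little care is the domain of the log-argument. The coordinate $x_1/x_2$ ranges over $[1/M^2,M^2]$ rather than $[1/M,M]$, so one should invoke Theorem~\ref{prop_outer_bound} with $M$ replaced by $M^2$. Since $\log(M^2)=2\log(M)=\Theta(\log M)$, the point count remains $\Theta(\log(M)+1/\sqrt{\epsilon})$ and the stated complexity is unaffected. The matching lower bound $\Omega(\log(M)+1/\sqrt{\epsilon})$ likewise transfers from the lower-bound half of Theorem~\ref{prop_outer_bound} through the same equivalence, delivering the $\Theta$ in the statement.
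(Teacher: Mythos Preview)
Your proposal is correct and takes essentially the same approach as the paper: the paper simply states that ``the upper and lower bounds results in Theorem~\ref{prop_outer_bound} can be directly applied to the polyhedral outer approximation of the exponential cone'' via the perspective-of-logarithm identification \eqref{eq_ec1}, and you have spelled out exactly how that lifting works. Your observation that the ratio $x_1/x_2$ ranges over $[1/M^2,M^2]$ rather than $[1/M,M]$, and that this only changes $\log(M)$ by a constant factor, is a nice point of care that the paper leaves implicit (cf.\ the analogous domain in Theorem~\ref{prop_approx_exp_cone}).
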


In practice, different from SOC approximations, the proposed outer approximation can be done more efficiently in an iterative way. 
That is, given a solution $\hat{\bm{x}}\in \Re^2\times\Re\setminus K_{\exp}(0)$, i.e., $(\hat{x}_1/\hat{x}_2,\hat{x}_3/\hat{x}_2) \notin \hyp\log(0)$.
Then we can add a gradient inequality to cut it off, i.e.,
\[ \log(\hat{x}_1/\hat{x}_2)+\hat{x}_2/\hat{x}_1(x_1/x_2-\hat{x}_1/\hat{x}_2)\geq x_3/x_2,\]
which is equivalent to
\[ x_2\log(\hat{x}_1/\hat{x}_2)+x_1\hat{x}_2/\hat{x}_1-x_2\geq x_3.\]
We proceed until reaching a stopping criterion (e.g., the relative gap is within $10^{-4}$).

We plan to employ the naive cutting plane method and the branch and cut algorithm to solve the MIECP, where the latter generates a gradient inequality via a delayed cut generating procedure. 
Our numerical study shows that the proposed cutting plane and branch and cut methods based on gradient inequalities can effectively solve MIECPs. 

\section{Numerical Study}\label{sec_numerical}
In this section, we provide numerical illustrations for both SOC approximations and polyhedral approximations by solving three types of MIECPs, i.e., a packing MIECP, a covering MIECP, and a sparse logistic regression problem.
The proposed methods are implemented in Gurobi, and we compare the results to the state-of-art solver MOSEK.
All the instances in this section are executed in Python 3.7 with calls to Gurobi 9.0 and MOSEK 9.2 on a personal computer with 2.3 GHz Intel Core i9 processor and 16G of memory. Codes of the numerical experiments are available at \url{https://github.com/qingye1/Exp_Cone_Approximation}.

\noindent{\textbf{Nomenclature for the Numerical Study:}} The optimality gap, denoted as ``Gap", is computed based on the true optimal solution of an MIECP, which is computed by the absolute difference between the best objective value of an approximation method and the true optimal value of an MIECP divided by the true optimal value. We use dash line ``---" to mark the cases whose Gaps are no larger than $10^{-4}$ and use asterisk ``*" to mark cases that exceed the time limit. We also compute the ratio of the running time of a proposed method to that of MOSEK, denoted as ``Ratio," and use the geometric mean of ratios to evaluate computational efficiency, denoted as ``Geo Mean." We use ``Section \ref{sec_exp_lim}" and ``Section \ref{sec_exp_taylor}" to denote the methods based on the SOC approximation results in Section \ref{sec_exp_lim} and Section \ref{sec_exp_taylor}, and use ``Section \ref{sec_exp_lim} Shift" and ``Section \ref{sec_exp_taylor} Shift" to denote their corresponding shifting methods studied in Section \ref{sec_exp_shift}. We use ``Cutting Plane" to denote naive implementation of cutting plane method without any delayed implementations and ``Branch and Cut" to denote the branch and cut algorithm with delayed constraint generations.
When employing Example \ref{emp_3}, we select the approximate solution from the list $\{2^{-6}, 2^{-5},\ldots, 2^{0}\}$. In ``Example \ref{emp_3} with Best Scale", we construct the best approximate solution for each exponential conic constraint using a feasible solution found by executing Example \ref{emp_3} with 3-tuple $(N,AS, TL)$, where $N\in [4]$, and time limit $TL\in \{2,10,30\}$ 
(to ensure a feasible solution to be found), and an approximate solution $AS$ is chosen from the list $\{2^{-6}, 2^{-5},\ldots, 2^{0}\}$. This procedure is called ``Best Scale." We follow the same procedure to find the best approximate solution for the shifting methods in Section \ref{sec_exp_lim} and Section \ref{sec_exp_taylor}. Note that we report the total time for constructing the best approximate solution using ``Best Scale" and solving an MIECP for methods employing the ``Best Scale" procedure.
We use ``n/a" to denote that an instance cannot be solved by one method. 
We call a method running into ``numerical issues'' if we cannot close the gap by increasing the number of SOC or polyhedral constraints.

	\subsection{Packing MIECP}
Let us consider the following mixed-integer convex packing problem
\begin{align}\label{eq_num_study}
	\min_{\bm{x}\in X}\left\{ \sum_{\ell\in [p]}\exp\left(\sum_{j\in [n]}c_{\ell j}x_j\right):\sum_{j\in [n]}a_{ij}x_j \leq  b_i, \forall i \in [m]\right\},
\end{align}
where $X=\{0,1\}^t\times [0,1]^{n-t}$. Note that the problem \eqref{eq_num_study} can be formulated as the following packing MIECP
\begin{align}\label{eq_num_study_exp_cone}
	\min_{\bm{x}\in X}\left\{\sum_{\ell\in [p]}v_\ell: \left(v_\ell,1, \sum_{j\in [n]}c_{\ell j}x_j\right)\in K_{\exp}(0), \forall \ell\in [p],\sum_{j\in [n]}a_{ij}x_j \leq  b_i, \forall i \in [m]\right\}.
\end{align}

In this subsection, we test the proposed methods by solving the packing MIECP \eqref{eq_num_study_exp_cone}. We conduct two experiments to test the proposed SOC and polyhedral approximations for solving \eqref{eq_num_study_exp_cone}, where we consider pure binary ($t=n$) and mixed-integer ($t\in [n-1]$) experiments and compare our results with MOSEK. In the testing instances, we set $m=100$ and suppose $a_{ij}\sim \mathrm{int}(0, 9)$, $b_i=4n$, $c_{\ell j}\sim -\mathrm{int}(0, 9)/n$ for all $i \in [m], j\in [n], \ell\in [p]$, where $\mathrm{int}(p, q)$ denotes a random integer between $p$ and $q$ including $p$ and $q$. 
The time limit is set to be $3600$ seconds for both experiments. Due to different experimental settings (e.g., the accuracy requirements), the pure continuous ($t=0$) experiment is reported in Appendix \ref{sec_continuous_packing}. 

\noindent\textbf{Experiment 1- Binary Packing MIECP:} 
For the binary experiment, we solve the small-scale instances with $n=20$, $p\in\{5, 7, \ldots, 25\}$ to do sensitive analyses of parameters and learn the patterns 
and then solve the large-scale problem with $n=100$, $p\in\{5, 10, \ldots, 50\}$. Due to the page limit, we do not report these results. The results for large-scale binary packing instances are shown in Table \ref{pack_binary_table_sec2}-Table \ref{pack_binary_table_sec5}, where only the cases with the Gap greater than $10^{-4}$ or running time within the time limit are displayed. Overall, we see that increasing the number of SOC constraints (i.e., $N$) can improve the Gap effectively but also requires a longer running time. In this experiment, two of our best methods (i.e., Example \ref{emp_3} Best Scale and Section \ref{sec_exp_taylor} Shift) on average are $85\%$ shorter than MOSEK, with Gap being less than $10^{-4}$.

The results for the proposed SOC approximation methods are displayed in Tables \ref{pack_binary_table_sec2} and \ref{pack_binary_table_sec3}.
For the binary packing MIECP, Example \ref{emp_1}, Section \ref{sec_exp_lim}, Section \ref{sec_exp_taylor}, and Section \ref{sec_exp_lim} Shift methods cannot solve all the cases with the Gap being no larger than $10^{-4}$ within the time limit, while all the other methods are consistently better than MOSEK. Particularly, it is worthy of mentioning that Example \ref{emp_1} with $N=20$ has numerical issues, and we are not able to improve the solution quality by increasing $N$. On average, the running time of Example \ref{emp_2} and Example \ref{emp_3} can be $60\%$, $69\%$  shorter than that of MOSEK, respectively. 
Both Example \ref{emp_3} with Best Scale and Section \ref{sec_exp_taylor} Shift method apparently are the best among all the methods, since their average running time is only $13\%$ of that of MOSEK. Particularly, Example \ref{emp_3} with Best Scale only needs one SOC constraint, and Section \ref{sec_exp_taylor} Shift method needs two SOC constraints to approximate  each exponential conic constraint. It is worthy of emphasizing that we find the approximate solution for Example \ref{emp_3} with Best Scale and Section \ref{sec_exp_taylor} Shift by executing Example \ref{emp_3} with the approximate solution equal to $2^{-4}$ and 2-second time limit. We also see that Section \ref{sec_exp_lim} and Section \ref{sec_exp_taylor} methods run into numerical issues, while the shifting method improves them significantly. 

\begin{table}[htbp]
	\centering
	\caption{Numerical Results of Section~\ref{sec_log} Methods for Solving the Binary Packing MIECP}
	\label{pack_binary_table_sec2}
	\begin{threeparttable}
		\scriptsize\setlength{\tabcolsep}{1.0pt}
		\begin{tabular}{c|rr|rrr|rrr|rrr|rrr}
			\hline
			$n=100$ & \multicolumn{2}{c|}{MOSEK} & \multicolumn{3}{c|}{Example \ref{emp_1}\tnote{i}} & \multicolumn{3}{c|}{Example \ref{emp_2}\tnote{ii}} & \multicolumn{3}{c|}{Example \ref{emp_3}\tnote{iii}} & \multicolumn{3}{c}{Example \ref{emp_3} Best Scale\tnote{iv}}  \\ \hline
			$p$ & \multicolumn{1}{c}{Gap} & \multicolumn{1}{c|}{Time (s)} & \multicolumn{1}{c}{Gap} & \multicolumn{1}{c}{Time (s)} & \multicolumn{1}{c|}{Ratio} & \multicolumn{1}{c}{Gap} & \multicolumn{1}{c}{Time (s)} & \multicolumn{1}{c|}{Ratio} & \multicolumn{1}{c}{Gap} & \multicolumn{1}{c}{Time (s)} & \multicolumn{1}{c|}{Ratio} & \multicolumn{1}{c}{Gap} & \multicolumn{1}{c}{Time (s)} & \multicolumn{1}{c}{Ratio} \\ \hline
			5   & --- & 2.63    & ---  & 5.34    & 2.02 &  ---  & 0.78  & 0.30 & ---  & 2.15   & 0.81 & ---  & 1.71   & 0.65 \\
			10  & ---  & 3.63    & ---  & 4.85    & 1.34 &  ---  & 1.64  & 0.45 & ---  & 1.69   & 0.47 & ---  & 1.65   & 0.45 \\
			15  & --- & 102.83  & --- & 56.03   & 0.54 	&  ---  & 22.89 & 0.22 & --- & 15.91  & 0.15 & --- & 6.54   & 0.06 \\
			20  & --- & 35.12   & 1.1e-4 & 78.66   & 2.24 &  ---  & 17.23 & 0.49 & --- & 6.90   & 0.20 & --- & 5.59   & 0.16 \\
			25  & --- & 111.18  & --- & 512.36  & 4.61 	&  ---  & 52.48 & 0.47 & --- & 70.73  & 0.64 & ---& 22.21  & 0.20 \\
			30  & --- & 742.96  & --- & 1989.90 & 2.68 &  ---  & 241.59 & 0.33	& --- & 188.23 & 0.25 & --- & 59.96  & 0.08 \\
			35  & --- & 113.84  & --- & 113.64  & 1.00 	&  ---  & 23.44 & 0.21 & --- & 15.87  & 0.14 & --- & 8.04   & 0.07 \\
			40  & --- & 240.44  & --- & 535.56  & 2.23  &  ---  & 154.69 & 0.64 & --- & 90.45  & 0.38 & ---& 23.77  & 0.10 \\
			45  & --- & 306.41  & --- & 436.13  & 1.42 &  ---  & 143.24 & 0.47	& --- & 68.50  & 0.22 & --- & 16.73  & 0.05 \\
			50  & --- & 2524.17 & 8.3e-3 & * & * &  ---  & 1802.24 & 0.71 & --- & 940.52 & 0.37 & --- & 198.66 & 0.08 \\ \hline
			Geo Mean &         &         &         &         & 1.72 &         &         & 0.40 	&         &        & 0.31 &         &        & 0.13  \\ \hline
		\end{tabular}
		\begin{tablenotes}
			\item[i] $N=20, a=1$;
			\item[ii] $N=6, s=4$;
			\item[iii] $N=4$ and the approximation solution is $2^{-4}$;
			\item[iv] $N=1$ and in the Best Scale procedure, we run Example \ref{emp_3} with $(N=1, AS=2^{-4}, TL=2)$.
		\end{tablenotes}
	\end{threeparttable}
\end{table}
\begin{table}[htbp]
	\centering
	\caption{Numerical Results of Section \ref{sec_exp} Methods for Solving the Binary Packing MIECP}
	\label{pack_binary_table_sec3}
	\begin{threeparttable}
		\scriptsize\setlength{\tabcolsep}{1.0pt}
		\begin{tabular}{c|rr|rrr|rrr|rrr|rrr}
			\hline
			$n=100$ & \multicolumn{2}{c|}{MOSEK} & \multicolumn{3}{c|}{Section \ref{sec_exp_lim}\tnote{i}} & \multicolumn{3}{c|}{Section \ref{sec_exp_taylor}\tnote{ii}} & \multicolumn{3}{c|}{Section \ref{sec_exp_lim} Shift\tnote{iii}} & \multicolumn{3}{c}{Section \ref{sec_exp_taylor} Shift\tnote{iv}} \\ \hline
			$p$ & \multicolumn{1}{c}{Gap} & \multicolumn{1}{c|}{Time (s)} & \multicolumn{1}{c}{Gap} & \multicolumn{1}{c}{Time (s)} & \multicolumn{1}{c|}{Ratio} & \multicolumn{1}{c}{Gap} & \multicolumn{1}{c}{Time (s)} & \multicolumn{1}{c|}{Ratio} & \multicolumn{1}{c}{Gap} & \multicolumn{1}{c}{Time (s)} & \multicolumn{1}{c|}{Ratio} & \multicolumn{1}{c}{Gap} & \multicolumn{1}{c}{Time (s)} & \multicolumn{1}{c}{Ratio} \\ \hline
			5   &  --- & 2.63     & 1.4e-3 & 2.80    &1.06  & 2.2e-3 & 2.82    & 1.07 &  ---  & 1.29&	0.49 &  ---  & 2.38   & 0.91  \\
			10  &  ---  & 3.63    & 1.5e-3 & 4.06    & 1.12 &  1.6e-3 & 1.71    & 0.47 &  ---  & 1.05&	0.29 &  ---  & 0.81   & 0.22  \\
			15  &  ---  & 102.83  & 6.9e-3 & 5.26   & 0.05 & 1.6e-3 & 10.06   & 0.10 &  ---  &19.50&	0.19 &  ---  & 8.84   & 0.09  \\
			20  &  ---  & 35.12   & 1.5e-3 & 5.79    & 0.16 &  2.0e-3 & 9.36    & 0.27 &  ---  & 11.32&	0.32 &  ---  & 5.20   & 0.15  \\
			25  &  ---  & 111.18   & 3.8e-3 & 22.94  & 0.21  &  1.7e-3 & 39.80   & 0.36 &  ---  & 769.74&	6.92 &  ---  & 17.37  & 0.16 \\
			30  &  ---  & 742.96   & 5.2e-3 & 64.60 & 0.09 &  1.9e-3 & 125.19  & 0.17 &  6.5e-3 & *&	* &  ---  & 82.70  & 0.11  \\
			35  &  ---  & 113.84  & 1.5e-3 & 22.60  &  0.20&  2.2e-3 & 9.88    & 0.09 &  --- & 440.48&	3.87 &  ---  & 9.60   & 0.08  \\
			40  &  ---  & 240.44  & 2.8e-3 & 85.51 & 0.36 & 2.2e-3 & 961.87  & 4.00 & --- & 1423.26&	5.92 &  ---  & 17.21  & 0.07  \\
			45  &  ---  & 306.41   & 1.0e-2 & 13.43 &  0.04& 2.2e-3 & 2213.90 & 7.23 & 1.1e-4 & 1937.42&	6.32 &  ---  & 20.51  & 0.07 \\
			50  &  --- & 2524.17 & 1.2e-2 & 258.69 &  0.10&  1.8e-2 & * & * & 1.7e-2 & *&	* &  ---  & 271.84 & 0.11  \\ \hline
			Geo Mean &         &        &       &         & 0.19 &        &         & 0.50 &         &         & 1.31 &         &        & 0.13 \\ \hline
		\end{tabular}
		\begin{tablenotes}
			\item[i] $N=15$;
			\item[ii] $N=6,s=1$;
			\item[iii] $N=7$ and using the same approximate solution as Example \ref{emp_3} Best Scale;
			\item[iv] $N=0,s=1$ and using the same approximate solution as Example \ref{emp_3} Best Scale.
		\end{tablenotes}
	\end{threeparttable}
\end{table}

The results for the proposed polyhedral approximation methods are shown in Table \ref{pack_binary_table_sec5}. The two polyhedral approximation methods, Cutting Plane and Branch and Cut, perform quite well. Specifically, the average running time of the Cutting Plane method is around $21\%$ of that of MOSEK, and using a delayed cut generation procedure (i.e., Branch and Cut method) has an average running time of around $18\%$ of that of MOSEK. 

\begin{table}[htbp]
	\centering
	\caption{Numerical Results of Section \ref{sec_outer} Methods for Solving the Binary Packing MIECP}
	\label{pack_binary_table_sec5}
	\begin{threeparttable}
		\scriptsize\setlength{\tabcolsep}{1.0pt}
		\begin{tabular}{c|rr|rrr|rrr}
			\hline
			$n=100$ & \multicolumn{2}{c|}{MOSEK} & \multicolumn{3}{c|}{Cutting Plane} & \multicolumn{3}{c}{Branch and Cut} \\ \hline
			$p$ & \multicolumn{1}{c}{Gap} & \multicolumn{1}{c|}{Time (s)}  & \multicolumn{1}{c}{Gap} & \multicolumn{1}{c}{Time (s)} & \multicolumn{1}{c|}{Ratio} & \multicolumn{1}{c}{Gap} & \multicolumn{1}{c}{Time (s)} & \multicolumn{1}{c}{Ratio}  \\ \hline
			5   &  --- & 2.63     & --- & 1.36   & 0.52 & ---  & 0.96   & 0.37 \\
			10  &  ---  & 3.63     & --- & 1.23   & 0.34 & --- & 1.93   & 0.53 \\
			15  &  ---  & 102.83  & --- & 19.31  & 0.19 & --- & 12.41  & 0.12 \\
			20  &  ---  & 35.12   & --- & 7.38   & 0.21 & ---& 6.06   & 0.17 \\
			25  &  ---  & 111.18    & --- & 23.91  & 0.22 & --- & 42.94  & 0.39 \\
			30  &  ---  & 742.96   & --- & 147.79 & 0.20 & --- & 116.78 & 0.16 \\
			35  &  ---  & 113.84   & --- & 12.86  & 0.11 & --- & 13.33  & 0.12 \\
			40  &  ---  & 240.44   & --- & 38.89  & 0.16 & --- & 23.58  & 0.10 \\
			45  &  ---  & 306.41    & --- & 38.90  & 0.13 & --- & 18.52  & 0.06 \\
			50  &  --- & 2524.17  & --- & 716.95 & 0.28 & --- & 607.11 & 0.24 \\ \hline
			Geo Mean &         &      &   &        & 0.21 &         &        & 0.18 \\ \hline
		\end{tabular}
	\end{threeparttable}
\end{table}

\noindent\textbf{Experiment 2- Mixed-Integer Packing MIECP:} For this experiment, we set $b_i=2n$ for each $i\in [m]$, $n=200$, $p\in\{10, 20, \ldots, 100\}$ and let half of the variables be continuous, and the remaining half be binary (i.e., $t=n/2$). 
The results for large-scale mixed-integer packing cases can be found in Table \ref{pack_mip_table_sec2}-Table \ref{pack_mip_table_sec5}, where only the cases with the Gap greater than $10^{-4}$ or running time within the time limit are displayed. In this experiment, our best method (i.e., Example \ref{emp_3} Best Scale) on average is more than $90\%$ shorter than MOSEK, with Gap being less than $10^{-4}$.

The results for the proposed SOC approximation methods are shown in Tables \ref{pack_mip_table_sec2} and \ref{pack_mip_table_sec3}.
For the mixed-integer packing MIECP, both Section \ref{sec_exp_lim} and Section \ref{sec_exp_taylor} methods cannot solve all the cases with the Gap being no larger than $10^{-4}$ within the time limit, while all the other methods are consistently better than MOSEK. 
The average running time of Example \ref{emp_1}, Example \ref{emp_2}, and Example \ref{emp_3} is $54\%$, $67\%$, $78\%$ shorter than that of MOSEK, respectively. Example \ref{emp_3} with Best Scale is the best among all the methods since its average running time is only $9\%$ of that of MOSEK. Particularly, Example \ref{emp_3} with Best Scale only needs one SOC constraint to approximate each exponential conic constraint. It is worthy of emphasizing that we find the approximate solution for Example \ref{emp_3} with Best Scale, denoted as Best Scale procedure, by executing Example \ref{emp_3} with the approximate solution equal to $2^{-4}$ and 2-second time limit. We also see that both Section \ref{sec_exp_lim} and Section \ref{sec_exp_taylor} methods run into numerical issues, while the shifting method improves them significantly. 

\begin{table}[htbp]
	\centering
	\caption{Numerical Results of Section \ref{sec_log} Methods for Solving the Mixed-Integer Packing MIECP}
	\label{pack_mip_table_sec2}
	\begin{threeparttable}
		\scriptsize\setlength{\tabcolsep}{1.0pt}
		\begin{tabular}{c|rr|rrr|rrr|rrr|rrr}
			\hline
			$n=200$ & \multicolumn{2}{c|}{MOSEK} & \multicolumn{3}{c|}{Example \ref{emp_1}\tnote{i}} & \multicolumn{3}{c|}{Example \ref{emp_2}\tnote{ii}} & \multicolumn{3}{c|}{Example \ref{emp_3}\tnote{iii}} & \multicolumn{3}{c}{Example \ref{emp_3} Best Scale\tnote{iv}}  \\ \hline
			$p$ & \multicolumn{1}{c}{Gap} & \multicolumn{1}{c|}{Time (s)} & \multicolumn{1}{c}{Gap} & \multicolumn{1}{c}{Time (s)} & \multicolumn{1}{c|}{Ratio} & \multicolumn{1}{c}{Gap} & \multicolumn{1}{c}{Time (s)} & \multicolumn{1}{c|}{Ratio} & \multicolumn{1}{c}{Gap} & \multicolumn{1}{c}{Time (s)} & \multicolumn{1}{c|}{Ratio} & \multicolumn{1}{c}{Gap} & \multicolumn{1}{c}{Time (s)} & \multicolumn{1}{c}{Ratio} \\ \hline
			10	&	---	&	2.80	&	---	&	2.41	&	0.86	&	---	&	1.98	&	0.71	&	---	&	0.71	&	0.25	&	---	&	0.87	&	0.31 \\ 	
			20	&	---	&	5.07	&	---	&	1.30	&	0.26	&	---	&	1.19	&	0.23	&	---	&	0.86	&	0.17	&	---	&	1.18	&	0.23 \\	
			30	&	---	&	66.96	&	---	&	19.19	&	0.29	&	---	&	11.51	&	0.17	&	---	&	11.02	&	0.16	&	---	&	5.22	&	0.08 \\	
			40	&	---	&	158.62	&	---	&	60.74	&	0.38	&	---	&	34.48	&	0.22	&	---	&	31.83	&	0.20	&	---	&	10.30	&	0.06 \\	
			50	&	---	&	119.30	&	---	&	57.94	&	0.49	&	---	&	30.80	&	0.26	&	---	&	19.87	&	0.17	&	---	&	6.59	&	0.06 \\	
			60	&	---	&	945.54	&	---	&	209.07	&	0.22 &	---	&	148.00	&	0.16	&	---	&	98.70	&	0.10	&	---	&	32.98	&	0.03 \\	
			70	&	---	&	1774.46	&	---	&	583.00	&	0.33	&	---	&	561.01	&	0.32	&	---	&	292.50	&	0.16	&	---	&	88.63	&	0.05 \\
			80	&	---	&	224.86	&	---	&	81.89	&	0.36 &	---	&	76.54	&	0.34	&	---	&	67.31	&	0.30	&	---	&	19.19	&	0.09 \\
			90	&	---	&	1636.50	&	---	&	778.97	&	0.48	&	---	&	499.44	&	0.31	&	---	&	363.26	&	0.22	&	---	&	83.93	&	0.05 \\
			100	&	---	&	393.98	&	---	&	1062.28	&	2.70	&	---	&	702.11	&	1.78	&	---	&	401.33	&	1.02	&	---	&	86.15	&	0.22 \\	 \hline
			Geo Mean	&		&	&		&		&	0.46 &		&		&	0.33	&		&		&	0.22	&		&		&	0.09\\ \hline
		\end{tabular}
		\begin{tablenotes}
			\item[i] $N=8, a=1$;
			\item[ii] $N=6, s=4$;
			\item[iii] $N=3$ and the approximation solution is $2^{-4}$;
			\item[iv] $N=1$ and in the Best Scale procedure, we run Example \ref{emp_3} with $(N=1, AS=2^{-4}, TL=2)$.
		\end{tablenotes}
	\end{threeparttable}
\end{table}

\begin{table}[htbp]
	\centering
	\caption{Numerical Results of Section \ref{sec_exp} Methods for Solving the Mixed-Integer Packing MIECP}
	\label{pack_mip_table_sec3}
	\begin{threeparttable}
		\scriptsize\setlength{\tabcolsep}{1.0pt}
		\begin{tabular}{c|rr|rrr|rrr|rrr|rrr}
			\hline
			$n=200$ & \multicolumn{2}{c|}{MOSEK}  & \multicolumn{3}{c|}{Section \ref{sec_exp_lim}\tnote{i}} & \multicolumn{3}{c|}{Section \ref{sec_exp_taylor}\tnote{ii}} & \multicolumn{3}{c|}{Section \ref{sec_exp_lim} Shift\tnote{iii}} & \multicolumn{3}{c}{Section \ref{sec_exp_taylor} Shift\tnote{iv}} \\ \hline
			$p$ & \multicolumn{1}{c}{Gap} & \multicolumn{1}{c|}{Time (s)} & \multicolumn{1}{c}{Gap} & \multicolumn{1}{c}{Time (s)} & \multicolumn{1}{c|}{Ratio} & \multicolumn{1}{c}{Gap} & \multicolumn{1}{c}{Time (s)} & \multicolumn{1}{c|}{Ratio} & \multicolumn{1}{c}{Gap} & \multicolumn{1}{c}{Time (s)} & \multicolumn{1}{c|}{Ratio} & \multicolumn{1}{c}{Gap} & \multicolumn{1}{c}{Time (s)} & \multicolumn{1}{c}{Ratio} \\ \hline
			10	&	---	&	2.80 &	2.4e-3	& 2.22	& 0.79	& 1.2e-4 &	1.88 &	0.67 &	---	& 2.48	&	0.89 & ---	& 0.97	& 0.35 \\		
			20	&	---	&	5.07 &	2.3e-3	& 9.69	& 1.91	& --- & 2.26 & 0.44	& ---	& 4.55	&	0.90 & --- & 1.46 &	0.29 \\
			30	&	---	&	66.96 &	2.2e-3	& 31.34	& 0.47	& --- & 8.61 & 0.13	& ---	& 13.65	&	0.20 & --- & 7.03	& 0.10  \\					
			40	&	---	&	158.62 & 2.2e-3	& 53.16	& 0.34	& --- &	21.24 &	0.13 & ---	& 29.98	&	0.19 & --- &	15.22 &	0.10 \\			
			50	&	---	&	119.30 & 2.1e-3	& 59.63	& 0.50	& ---	& 15.00	& 0.13 & --- & 24.24	&	0.20 & --- &	16.01 &	0.13 \\				
			60	&	---	&	945.54 & 2.2e-3	& 459.42 &	0.49 & 2.3e-4	& 48.02	& 0.05	& ---	& 128.16	&	0.14 & ---	& 43.6	& 0.05 \\				
			70	&	---	&	1774.46 & 2.2e-3 & 489.68 &	0.28 & 2.6e-4	& 99.34	& 0.06	& --- & 297.84	&	0.17 & ---	& 81.96	& 0.05	\\				
			80	&	---	&	224.86 & 2.1e-3	& 88.69	& 0.39	& ---	& 37.06	& 0.16 &	---	& 33.04	&	0.15 &	---	& 17.51	& 0.08	\\				
			90	&	---	&	1636.50 & 2.1e-3 & 969.90 &	0.59 &	1.2e-4	& 136.14 &	0.08 & ---	& 2215.28	&	1.35 &	---	& 111.63 & 0.07 \\			
			100	&	---	&	393.98 & 2.1e-3	& 963.85 &	2.45 &	1.4e-4	& 157.30 &	0.40 & ---	& 367.79	&	0.93  &	---	& 115.81 & 0.29 \\ \hline	
			Geo Mean	&		&			&		&		&	0.63	&			&		&	0.16	&	&	&	0.35	&		&		&	0.12 \\ \hline	
		\end{tabular}
		\begin{tablenotes}
			\item[i] $N=20$;
			\item[ii] $N=6,s=1$;
			\item[iii] $N=5$ and using the same approximate solution as Example \ref{emp_3} Best Scale;
			\item[iv] $N=0,s=1$ and using the same approximate solution as Example \ref{emp_3} Best Scale.
		\end{tablenotes}
	\end{threeparttable}
\end{table}

The results for the proposed polyhedral approximation methods are shown in Table \ref{pack_mip_table_sec5}. The polyhedral approximation method, i.e., Branch and Cut, performs well. Specifically, the average running time of the Branch and Cut method is around $27\%$ of that of MOSEK.

\begin{table}[htbp]
	\centering
	\caption{Numerical Results of Section \ref{sec_outer} Methods for Solving the Mixed-Integer Packing MIECP}
	\label{pack_mip_table_sec5}
	\begin{threeparttable}
		\scriptsize\setlength{\tabcolsep}{1.0pt}
		\begin{tabular}{c|rr|rrr}
			\hline
			$n=200$ & \multicolumn{2}{c|}{MOSEK}  & \multicolumn{3}{c}{Branch and Cut} \\ \hline
			$p$ & \multicolumn{1}{c}{Gap} & \multicolumn{1}{c|}{Time (s)} & \multicolumn{1}{c}{Gap} & \multicolumn{1}{c}{Time (s)} & \multicolumn{1}{c}{Ratio} \\ \hline
			10	&	---	&	2.80  & --- &	2.00 &	0.71\\		
			20	&	---	&	5.07  &	---	& 3.02	& 0.60\\
			30	&	---	&	66.96  & --- &	15.87 &	0.24\\					
			40	&	---	&	158.62  &	---	& 29.90	& 0.19\\			
			50	&	---	&	119.30  &	---	& 16.97	& 0.14\\				
			60	&	---	&	945.54 & --- &	159.46	& 0.17\\				
			70	&	---	&	1774.46 & --- &	238.71	& 0.13\\				
			80	&	---	&	224.86 & --- &	45.94 &	0.20\\				
			90	&	---	&	1636.50 & --- &	361.91	& 0.22\\			
			100	&	---	&	393.98 & --- &	323.73	& 0.82\\ \hline	
			Geo Mean	&		&		&		&		&	0.27\\ \hline	
		\end{tabular}
	\end{threeparttable}
\end{table}

\subsection{Covering MIECP}
In this subsection, we consider the following mixed-integer convex covering problem
\begin{align}\label{eq_num_study2}
	\min_{\bm{x}\in X}\left\{\sum_{\ell\in [p]}\left(\sum_{j\in [n]}c_{\ell j}x_j\right)\log\left(\sum_{j\in [n]}c_{\ell j}x_j\right): \sum_{j\in [n]}a_{ij}x_j \geq  b_i, \forall i \in [m]\right\},
\end{align}
where $X=\{0,1\}^t\times [0,1]^{n-t}$. Note that the problem \eqref{eq_num_study2} can be formulated as the following covering MIECP
\begin{align}\label{eq_num_study_exp_cone2}
	\min_{\bm{x}\in X}\left\{\sum_{\ell\in [p]}v_\ell: \left(1, \sum_{j\in [n]}c_{\ell j}x_j,-v_\ell\right)\in K_{\exp}(0), \forall \ell\in [p], \sum_{j\in [n]}a_{ij}x_j \geq  b_i, \forall i \in [m]\right\}.
\end{align}

Similar to the packing MIECP, we conduct two experiments to test the proposed SOC and polyhedral approximations for solving \eqref{eq_num_study_exp_cone2}, where we consider pure binary ($t=n$) and mixed-integer ($t\in [n-1]$) experiments and compare our results with MOSEK.  In our testing instances, we set $m=100$ and suppose $a_{ij}\sim \mathrm{int}(0, 9)$, $b_i=2n$, $c_{\ell j}\sim \mathrm{int}(0, 9)/n$ for all $i \in [m], j\in [n], \ell\in [p]$. 
The time limit is set to be $3600$ seconds for both experiments. 

\noindent\textbf{Experiment 3- Binary Covering MIECP:} 
For the binary experiment, we solve the small-scale instances with $n=30$, $p\in\{5, 10, \ldots, 50\}$ to do sensitive analyses of parameters and learn the patterns and then solve the large-scale instances with $n=50$, $p\in\{5, 10, \ldots, 50\}$. Due to the page limit, we do not report these results. 
The results for large-scale binary covering instances can be found in Table \ref{cover_binary_table_sec2}-Table \ref{cover_binary_table_sec5}, where only the cases with the Gap greater than $10^{-4}$ or running time within the time limit are displayed. In this experiment, our best method (i.e., Branch and Cut), on average, is at least $90\%$ shorter than MOSEK, with Gap being less than $10^{-4}$. 

The results for the proposed SOC approximation methods are shown in Tables \ref{cover_binary_table_sec2} and \ref{cover_binary_table_sec3}.
It is seen that the Section \ref{sec_exp_lim} method cannot solve all the cases with the Gap being no larger than $10^{-4}$ within the time limit, while all the other methods are consistently better than MOSEK. On average, the running time of Example \ref{emp_1}, Example \ref{emp_2}, and Example \ref{emp_3} can be $67\%$, $79\%$, $80\%$ shorter than that of MOSEK, respectively. It is worthy of mentioning that Example \ref{emp_3} with Best Scale is $88\%$ and Section \ref{sec_exp_taylor} Shift method is $89\%$ shorter than MOSEK. Particularly, both Example \ref{emp_3} with Best Scale and Section \ref{sec_exp_taylor} Shift method only need one SOC constraint to approximate each exponential conic constraint. It is worthy of emphasizing that we find the approximate solution for Example \ref{emp_3} with Best Scale and Section \ref{sec_exp_taylor} Shift by executing Example \ref{emp_3} with the approximate solution equal to $2^{-1}$ and 2-second time limit. We also see that the Section \ref{sec_exp_lim} method runs into numerical issues, while the shifting method improves it significantly.

\begin{table}[htbp]
	\centering
	\caption{Numerical Results of Section \ref{sec_log} Methods for Solving the Binary Covering MIECP}
	\label{cover_binary_table_sec2}
	\begin{threeparttable}
		\scriptsize\setlength{\tabcolsep}{1.0pt}
		\begin{tabular}{c|rr|rrr|rrr|rrr|rrr}
			\hline
			$n=50$ & \multicolumn{2}{c|}{MOSEK} & \multicolumn{3}{c|}{Example \ref{emp_1}\tnote{i}} & \multicolumn{3}{c|}{Example \ref{emp_2}\tnote{ii} } & \multicolumn{3}{c|}{Example \ref{emp_3}\tnote{iii}} & \multicolumn{3}{c}{Example \ref{emp_3} Best Scale\tnote{iv}} \\ \hline
			$p$ & \multicolumn{1}{c}{Gap} & \multicolumn{1}{c|}{Time (s)} & \multicolumn{1}{c}{Gap} & \multicolumn{1}{c}{Time (s)} & \multicolumn{1}{c|}{Ratio} & \multicolumn{1}{c}{Gap} & \multicolumn{1}{c}{Time (s)} & \multicolumn{1}{c|}{Ratio} & \multicolumn{1}{c}{Gap} & \multicolumn{1}{c}{Time (s)} & \multicolumn{1}{c|}{Ratio} & \multicolumn{1}{c}{Gap} & \multicolumn{1}{c}{Time (s)} & \multicolumn{1}{c}{Ratio} \\ \hline
			5   & ---  & 0.89   & ---  & 0.24   & 0.27 	& ---  & 0.20   & 0.22 	& ---  & 0.33   & 0.37 & ---  & 1.86  & 2.09 \\			
			10  & ---  & 18.30  & ---  & 5.42   & 0.30 & ---  & 3.79   & 0.21	& ---  & 2.16   & 0.12 &---  & 3.51  & 0.19 \\			
			15  & ---  & 49.04  & ---  & 18.60  & 0.38 & ---  & 9.76   & 0.20 	& ---  & 7.78   & 0.16 & ---  & 5.83  & 0.12 \\			
			20  & ---  & 159.79 & ---  & 39.52  & 0.25 & ---  & 23.44  & 0.15  & ---  & 20.51  & 0.13 & ---  & 10.91 & 0.07 \\			
			25  & ---  & 368.94 & ---  & 59.29  & 0.16 	& ---  & 54.23  & 0.15  & ---  & 30.71  & 0.08 & ---  & 21.49 & 0.06 \\			
			30  & ---  & 720.02 & ---  & 332.12 & 0.46 & ---  & 178.82 & 0.25  & ---  & 156.53 & 0.22 & ---  & 61.50 & 0.09 \\			
			35  & ---  & 313.22 & ---  & 100.84 & 0.32	& ---  & 58.94  & 0.19  & --- & 77.35  & 0.25 & ---  & 21.71 & 0.07 \\			
			40  & ---  & 827.92 & ---  & 376.44 & 0.45 & ---  & 192.63 & 0.23  & ---  & 334.41 & 0.40 & ---  & 74.76 & 0.09 \\			
			45  & ---  & 532.81 & ---  & 216.65 & 0.41 & ---  & 163.42 & 0.31  & ---  & 139.18 & 0.26 & ---  & 33.33 & 0.06 \\			
			50  & ---  & 307.16 & ---  & 145.28 & 0.47 & ---  & 65.50  & 0.21  & ---  & 58.03  & 0.19 & ---  & 25.15 & 0.08 \\ \hline			
			Geo Mean &         &        &         &        & 0.33  &         &        & 0.21 &         &        & 0.20 &         &       & 0.12 \\ \hline
		\end{tabular}
		\begin{tablenotes}
			\item[i] $N=4, a=1$;
			\item[ii] $N=3, s=1$;
			\item[iii] $N=2$ and the approximation solution is $2^{-1}$; 
			\item[iv] $N=1$ and in the Best Scale procedure, we run Example \ref{emp_3} with 3-tuple $(N=1, AS=2^{-1}, TL=2)$.
		\end{tablenotes}
	\end{threeparttable}
\end{table}

\begin{table}[htbp]
	\centering
	\caption{Numerical Results of Section \ref{sec_exp} Methods for Solving the Binary Covering MIECP}
	\label{cover_binary_table_sec3}	
	\begin{threeparttable}
		\scriptsize\setlength{\tabcolsep}{1.0pt}
		\begin{tabular}{c|rr|rrr|rrr|rrr|rrr}
			\hline
			$n=50$ & \multicolumn{2}{c|}{MOSEK} & \multicolumn{3}{c|}{Section \ref{sec_exp_lim}\tnote{i} } & \multicolumn{3}{c|}{Section \ref{sec_exp_taylor}\tnote{ii}} & \multicolumn{3}{c|}{Section \ref{sec_exp_lim} Shift\tnote{iii} } & \multicolumn{3}{c}{Section \ref{sec_exp_taylor} Shift\tnote{iv} } \\ \hline
			$p$ & \multicolumn{1}{c}{Gap} & \multicolumn{1}{c|}{Time (s)}  & \multicolumn{1}{c}{Gap} & \multicolumn{1}{c}{Time (s)} & \multicolumn{1}{c|}{Ratio} & \multicolumn{1}{c}{Gap} & \multicolumn{1}{c}{Time (s)} & \multicolumn{1}{c|}{Ratio} & \multicolumn{1}{c}{Gap} & \multicolumn{1}{c}{Time (s)} & \multicolumn{1}{c|}{Ratio} & \multicolumn{1}{c}{Gap} & \multicolumn{1}{c}{Time (s)} & \multicolumn{1}{c}{Ratio} \\ \hline
			5    & --- & 0.89   & 7.1e-4 & 0.27 & 0.31 & ---  & 6.75    & 7.61 & ---  & 0.48   & 0.54 & ---   & 0.48  & 0.55  \\
			10   & ---  & 18.30   & 8.6e-4  & 9.43 & 0.52 & ---  & 5.29    & 0.29 & ---  & 7.59   & 0.41 & ---  & 3.39  & 0.19  \\
			15   & ---  & 49.04  & 9.4e-4  & 13.32 & 0.27 & ---  & 12.42   & 0.25 & ---  & 12.75  & 0.26 & ---  & 6.68  & 0.14  \\
			20   & ---  & 159.79  & 1.0e-3  & 32.51 & 0.20 & ---  & 31.19   & 0.20 & --- & 26.63  & 0.17 & ---  & 8.14  & 0.05  \\
			25   & ---  & 368.94  & 1.0e-3  & 1167.74 & 3.17 & ---  & 1096.47 & 2.97 & ---  & 44.59  & 0.12 & ---  & 22.54 & 0.06  \\
			30   & ---  & 720.02  & 1.0e-3 & 2750.53 & 3.82 & ---  & 163.64  & 0.23 & ---  & 126.02 & 0.18 & ---  & 76.89 & 0.11  \\
			35   & ---  & 313.22  & 9.2e-4  & 1661.78 & 5.31 & ---  & 106.81  & 0.34 & ---  & 61.83  & 0.20 & ---  & 31.21 & 0.10  \\
			40   & ---  & 827.92  & 9.6e-4 & 2595.93 & 3.14 & ---  & 536.93  & 0.65 & ---  & 164.26 & 0.20 & --- & 66.03 & 0.08  \\
			45   & ---  & 532.81  & 9.6e-4  & 1638.89 & 3.08 & ---  & 2047.01 & 3.84 & ---  & 164.99 & 0.31 & ---  & 43.12 & 0.08  \\
			50   &  ---  & 307.16  & 9.2e-4 & 1126.08 & 3.67 & ---  & 1366.49 & 4.45 & ---  & 893.44 & 2.91 & ---  & 37.14 & 0.12  \\ \hline
			Geo Mean &         &        &         &         & 1.35 &         &         & 0.88 &         &        & 0.31 &         &       & 0.11 \\ \hline
		\end{tabular}
		\begin{tablenotes}
			\item[i] $N=9$;
			\item[ii] $N=6,s=1$;
			\item[iii] $N=6$ and using the same approximate solution as Example \ref{emp_3} Best Scale;  
			\item[iv] $N=0,s=1$ and using the same approximate solution as Example \ref{emp_3} Best Scale.
		\end{tablenotes}
	\end{threeparttable}
\end{table}

The results for the proposed polyhedral approximation methods are shown in Table \ref{cover_binary_table_sec5}. The two polyhedral approximation methods, i.e., Cutting Plane and Branch and Cut, perform very well, and the Branch and Cut method is the best among all the methods. Specifically, the average running time of the Cutting Plane method is around $7\%$ of that of MOSEK, and using delayed cut generation procedure (i.e., Branch and Cut method) has an average running time of around $6\%$ of that of MOSEK.

\begin{table}[htbp]
	\centering
	\caption{Numerical Results of Section \ref{sec_outer} Methods for Solving the Binary Covering MIECP}
	\label{cover_binary_table_sec5}	
	\begin{threeparttable}
		\scriptsize\setlength{\tabcolsep}{1.0pt}
		\begin{tabular}{c|rr|rrr|rrr}
			\hline
			$n=50$ & \multicolumn{2}{c|}{MOSEK} & \multicolumn{3}{c|}{Cutting Plane} & \multicolumn{3}{c}{Branch and Cut} \\ \hline
			$p$ & \multicolumn{1}{c}{Gap} & \multicolumn{1}{c|}{Time (s)}  & \multicolumn{1}{c}{Gap} & \multicolumn{1}{c}{Time (s)} & \multicolumn{1}{c|}{Ratio} & \multicolumn{1}{c}{Gap} & \multicolumn{1}{c}{Time (s)} & \multicolumn{1}{c}{Ratio}  \\ \hline
			5    & --- & 0.89   & ---  & 0.14  & 0.16 & ---     & 0.37  & 0.42 \\
			10   & ---  & 18.30   & ---  & 2.96  & 0.16 & ---  & 2.32  & 0.13 \\
			15   & ---  & 49.04   & --- & 3.48  & 0.07 & ---  & 4.67  & 0.10 \\
			20   & ---  & 159.79  & --- & 7.40  & 0.05 & ---     & 5.76  & 0.04 \\
			25   & ---  & 368.94  & ---  & 30.03 & 0.08 & ---  & 10.23 & 0.03 \\
			30   & ---  & 720.02  & --- & 58.17 & 0.08 & ---  & 27.26 & 0.04 \\
			35   & ---  & 313.22  & --- & 12.68 & 0.04 & --- & 13.28 & 0.04 \\
			40   & ---  & 827.92  & --- & 56.95 & 0.07 & ---  & 36.91 & 0.04 \\
			45   & ---  & 532.81  & --- & 35.53 & 0.07 & ---  & 22.45 & 0.04 \\
			50   &  ---  & 307.16  & --- & 9.55  & 0.03 & ---     & 21.22 & 0.07 \\ \hline
			Geo Mean &         &        &         &       & 0.07 &         &       & 0.06 \\ \hline
		\end{tabular}
	\end{threeparttable}
\end{table}

\noindent\textbf{Experiment 4- Mixed-Integer Covering MIECP:} 
For the mixed-integer experiment, we set $n=200$, $p\in\{10, 20, \ldots, 100\}$ and let half of the variables be continuous, and the remaining half be binary (i.e., $t=n/2$). 
The results for the mixed-integer covering instances can be found in Table \ref{cover_mip_table_sec2}-Table \ref{cover_mip_table_sec5}, where only the cases with the Gap greater than $10^{-4}$ or running time within the time limit are displayed. In this experiment, two of our best methods (i.e., Example \ref{emp_3} Best Scale and Section \ref{sec_exp_taylor} Shift) on average are around $89\%$ shorter than MOSEK, with Gap being less than $10^{-4}$. 

The results for the proposed SOC approximation methods are shown in Tables \ref{cover_mip_table_sec2} and \ref{cover_mip_table_sec3}.
For mixed-integer covering MIECP, Section \ref{sec_exp_lim} and Section \ref{sec_exp_taylor} 
methods cannot solve all the cases with the Gap being no larger than $10^{-4}$ within the time limit, while all the other methods are consistently better than MOSEK. On average, the running time of Example \ref{emp_1}, Example \ref{emp_2}, and Example \ref{emp_3} can be $79\%$, $82\%$, $87\%$ shorter than that of MOSEK, respectively. Both Example \ref{emp_3} with Best Scale and Section \ref{sec_exp_taylor} Shift methods are the best among all the methods since their average running time is only $11\%$ of that of MOSEK. Particularly, both Example \ref{emp_3} with Best Scale and Section \ref{sec_exp_taylor} Shift only need one SOC constraint to approximate  each exponential conic constraint. It is worthy of emphasizing that we find the approximate solution for Example \ref{emp_3} with Best Scale and Section \ref{sec_exp_taylor} Shift method by executing Example \ref{emp_3} with the approximate solution equal to $2^{-1}$ and 2-second time limit. We also see that Section \ref{sec_exp_lim} and Section \ref{sec_exp_taylor} methods run into numerical issues, while the shifting method improves them significantly. 

\begin{table}[htbp]
	\centering
	\caption{Numerical Results of Section \ref{sec_log} Methods for Solving the Mixed-Integer Covering MIECP}
	\label{cover_mip_table_sec2}
	\begin{threeparttable}
		\scriptsize\setlength{\tabcolsep}{1.0pt}
		\begin{tabular}{c|rr|rrr|rrr|rrr|rrr}
			\hline
			$n=200$ & \multicolumn{2}{c|}{MOSEK} & \multicolumn{3}{c|}{Example \ref{emp_1}\tnote{i}} & \multicolumn{3}{c|}{Example \ref{emp_2}\tnote{ii} } & \multicolumn{3}{c|}{Example \ref{emp_3}\tnote{iii}} & \multicolumn{3}{c}{Example \ref{emp_3} Best Scale\tnote{iv}}  \\ \hline
			$p$ & \multicolumn{1}{c}{Gap} & \multicolumn{1}{c|}{Time (s)} & \multicolumn{1}{c}{Gap} & \multicolumn{1}{c}{Time (s)} & \multicolumn{1}{c|}{Ratio} & \multicolumn{1}{c}{Gap} & \multicolumn{1}{c}{Time (s)} & \multicolumn{1}{c|}{Ratio} & \multicolumn{1}{c}{Gap} & \multicolumn{1}{c}{Time (s)} & \multicolumn{1}{c|}{Ratio} & \multicolumn{1}{c}{Gap} & \multicolumn{1}{c}{Time (s)} & \multicolumn{1}{c}{Ratio} \\ \hline
			10 & --- & 3.40	& --- &	1.10 & 0.32	& --- & 0.79 &	0.23  & --- & 0.71 & 0.21 & --- & 1.72 & 0.50	\\
			20 & --- & 8.38	& --- &	1.56 & 0.19	& --- & 3.64 &	0.43  & --- & 1.17 & 0.14 & --- & 2.77 & 0.33	\\
			30 & --- & 74.58 & --- & 10.33 & 0.14 &	---	& 8.85 & 0.12  & ---	& 7.24 & 0.10 &	---	& 6.00 & 0.08 \\
			40 & --- & 130.38 &	---	& 20.33	& 0.16	& --- & 14.47 & 0.11 & --- & 12.29 & 0.09 & ---	& 8.58	& 0.07	\\
			50 & --- & 198.82 &	---	& 26.63	& 0.13	& --- & 19.20 & 0.10 & --- & 15.73 & 0.08 & ---	& 11.99	& 0.06	\\
			60	& --- &	79.42 &	---	& 19.87	& 0.25	&	---	& 12.40 & 0.16 & --- & 9.84 & 0.12 & --- & 7.67	& 0.10	\\
			70	& --- &	113.44 & --- & 21.54 & 0.19	& --- & 15.18 & 0.13 & --- & 12.45 & 0.11 & ---	& 9.01	& 0.08	\\
			80	& --- &	1515.42	& --- &	183.89 & 0.12 & --- &	161.97 & 0.11 &	---	& 105.55 & 0.07 & --- &	60.74 &	0.04 \\
			90	& --- &	438.67 & --- & 82.62 & 0.19	& --- & 54.26 & 0.12 & --- & 62.93 & 0.14 & --- & 27.16	& 0.06	\\
			100	& --- &	357.46 & --- & 293.19 &	0.82 & --- & 304.25 &	0.85 &	---	& 191.73 & 0.54	& --- &	110.71	& 0.31	\\ \hline
			Geo Mean & & & & & 0.21	& & & 0.18 & & & 0.13 & & & 0.11 \\ \hline
		\end{tabular}
		\begin{tablenotes}
			\item[i] $N=3, a=1$;
			\item[ii] $N=3, s=1$;
			\item[iii] $N=2$ and the approximation solution is $2^{-1}$; 
			\item[iv] $N=1$ and in the Best Scale procedure, we run Example \ref{emp_3} with 3-tuple $(N=1, AS=2^{-1}, TL=2)$. 
		\end{tablenotes}
	\end{threeparttable}
\end{table}

\begin{table}[htbp]
	\centering
	\caption{Numerical Results of Section \ref{sec_exp} Methods for Solving the Mixed-Integer Covering MIECP}
	\label{cover_mip_table_sec3}	
	\begin{threeparttable}
		\scriptsize\setlength{\tabcolsep}{1.0pt}
		\begin{tabular}{c|rr|rrr|rrr|rrr|rrr}
			\hline
			$n=200$ & \multicolumn{2}{c|}{MOSEK} & \multicolumn{3}{c|}{Section \ref{sec_exp_lim}\tnote{i} } & \multicolumn{3}{c|}{Section \ref{sec_exp_taylor}\tnote{ii}} & \multicolumn{3}{c|}{Section \ref{sec_exp_lim} Shift\tnote{iii} } & \multicolumn{3}{c}{Section \ref{sec_exp_taylor} Shift\tnote{iv} } \\ \hline
			$p$ & \multicolumn{1}{c}{Gap} & \multicolumn{1}{c|}{Time (s)} & \multicolumn{1}{c}{Gap} & \multicolumn{1}{c}{Time (s)} & \multicolumn{1}{c|}{Ratio} & \multicolumn{1}{c}{Gap} & \multicolumn{1}{c}{Time (s)} & \multicolumn{1}{c|}{Ratio} & \multicolumn{1}{c}{Gap} & \multicolumn{1}{c}{Time (s)} & \multicolumn{1}{c|}{Ratio} & \multicolumn{1}{c}{Gap} & \multicolumn{1}{c}{Time (s)} & \multicolumn{1}{c}{Ratio} \\ \hline
			10	& --- &	3.40 & --- & 13.93 & 4.09 &	---	& 1.79 & 0.52 &	---	& 1.98	&	0.58 &	---	& 1.63 & 0.48 \\
			20	& --- &	8.38 & --- & 36.36 & 4.34 &	---	& 4.92 & 0.59 &	---	& 3.12	&	0.37 &	---	& 3.54 & 0.42 \\
			30	& --- &	74.58 &	---	& 242.64 & 3.25 & --- & 12.97 &	0.17 & --- &  9.41	&	0.13 &	---	& 6.69 & 0.09 \\
			40	& --- &	130.38 & 4.9e-4 & 376.26 & 2.89 & --- & 24.50	& 0.19 & --- & 	11.64	&	0.09 & --- &	8.54 & 0.07 \\
			50	& --- &	198.82 & 2.1e-3 & 400.18 &	2.01 & --- & 30.46	& 0.15 & --- & 	18.31	&	0.09 & --- &	12.10 &	0.06 \\
			60	& --- &	79.42 & --- & 204.16 &	2.57 & --- & 21.50	& 0.27 & --- & 	13.52	&	0.17 & --- & 7.13	& 0.09 \\
			70	& --- &	113.44 	& --- & 305.38 & 2.69 &	---	& 197.95 & 1.74	& --- &	13.44	& 0.12 & --- & 8.83 &	0.08 \\
			80	& --- &	1515.42	&	1.7e-3 & * & * & 1.5e-3	& *	& *	& --- &	87.86	&	0.06  & --- & 51.44 & 0.03 \\
			90	& --- &	438.67 	& 1.5e-4 & 1825.29 & 4.16 &	---	& 1633.22 &	3.72 & --- & 47.07	&	0.11 & --- &	22.49 &	0.05 \\
			100	& --- &	357.46  & 3.0e-3 & * &	* &	2.0e-3 & * & * & --- & 	144.77	&	0.41 & --- &	85.60 &	0.24 \\ \hline
			Geo Mean	& &		& & & 3.15 & & & 0.48 & & &	0.16 & & & 0.11 \\ \hline
		\end{tabular}
		\begin{tablenotes}
			\item[i] $N=12$;
			\item[ii] $N=6,s=1$;
			\item[iii] $N=3$ and using the same approximate solution as Example \ref{emp_3} Best Scale;  
			\item[iv] $N=0,s=1$ and using the same approximate solution as Example \ref{emp_3} Best Scale. 
		\end{tablenotes}
	\end{threeparttable}
\end{table}

The results for the proposed polyhedral approximation methods are shown in Table \ref{cover_mip_table_sec5}. The polyhedral approximation method, i.e., Branch and Cut, performs very well. Specifically, the average running time of the Branch and Cut method is around $12\%$ of that of MOSEK. 

\begin{table}[htbp]
	\centering
	\caption{Numerical Results of Section \ref{sec_outer} Methods for Solving the Mixed-Integer Covering MIECP}
	\label{cover_mip_table_sec5}	
	\begin{threeparttable}
		\scriptsize\setlength{\tabcolsep}{1.0pt}
		\begin{tabular}{c|rr|rrr}
			\hline
			$n=200$ & \multicolumn{2}{c|}{MOSEK} & \multicolumn{3}{c}{Branch and Cut} \\ \hline
			$p$ & \multicolumn{1}{c}{Gap} & \multicolumn{1}{c|}{Time (s)} & \multicolumn{1}{c}{Gap} & \multicolumn{1}{c}{Time (s)} & \multicolumn{1}{c}{Ratio} \\ \hline
			10	& --- &	3.40 & --- & 1.76 & 0.52\\
			20	& --- &	8.38 & --- &	1.68 & 0.20\\
			30	& --- &	74.58 &	---	& 7.45 & 0.10\\
			40	& --- &	130.38 & --- &	17.42 &	0.13\\
			50	& --- &	198.82 & --- & 13.22 &	0.07\\
			60	& --- &	79.42 & --- &	5.30 & 0.07\\
			70	& --- &	113.44 & --- & 8.02 & 0.07\\
			80	& --- &	1515.42	& --- & 137.53 & 0.09\\
			90	& --- &	438.67 & --- & 24.57 & 0.06\\
			100	& --- &	357.46 & --- & 111.92 & 0.31\\ \hline
			Geo Mean	& &	  & & &	0.12\\ \hline
		\end{tabular}
	\end{threeparttable}
\end{table}

\subsection{Sparse Logistic Regression (SLR)}
In this subsection, we consider the following sparse logistic regression (SLR)
\begin{align}\label{eq_num_study3}\footnotesize
	\min_{\scriptsize\begin{array}{c}
			\bm{\theta}\in \Re^d,\\\bm{z}\in \{0,1\}^{ d}
	\end{array}}\left\{\sum_{i\in [n]}\left[-y_i \log(h_{\bm{\theta}}(\bfx_i))-(1-y_i) \log(1-h_{\bm{\theta}}(\bfx_i))\right]+ \lambda \|\bm{\theta}\|_1: \begin{array}{c}
		\sum_{ j\in [d]} z_{ j}=k,\\
		\bm{\theta}-M_{\theta}\bfz \leq 0,\\
		-\bm{\theta}-M_{\theta}\bfz \leq 0
	\end{array}
	\right\},
\end{align}
where there are $n$ data points $\{(\bm{x}_i,y_i)\}_{ i\in[n]}\subseteq \Re^d\times \{0,1\}$,  $h_{\bm{\theta}}(\bfx_{ i})=[1+\exp(-\bm{\theta}^{\top}\bfx_{ i})]^{-1}$ denotes the sigmoid function, and $M_{\theta}$ is the big-M coefficient. Note that  we can choose $M_{\theta}:=n\log(2)/\lambda$. 
Indeed, observe that the optimal value of SLR \eqref{eq_num_study3} must be less than or equal to $n\log(2)$ since $\bm{\theta}=\bm{0}$ is a feasible solution with objective value equal to $n\log(2)$. Thus, there exists an optimal solution such that 
$$\sum_{i\in [n]}\left[-y_i \log(h_{\bm{\theta}}(\bfx_i))-(1-y_i) \log(1-h_{\bm{\theta}}(\bfx_i))\right]+ \lambda \|\bm{\theta}\|_1 \leq n\log(2).$$ 
Since $\sum_{i\in [n]}\left[-y_i \log(h_{\bm{\theta}}(\bfx_i))-(1-y_i) \log(1-h_{\bm{\theta}}(\bfx_i))\right] \geq 0$, we must have $\lambda\|\bm{\theta}\|_1 \leq n\log(2)$. Therefore, we can upper bound $\|\bm{\theta}\|_1$ as  $\|\bm{\theta}\|_1 \leq n\log(2)/\lambda=M_{\theta}$, i.e., $|\theta_j|\leq M_{\theta}$ is valid for each $j\in [d]$.

Note that SLR \eqref{eq_num_study3} can be formulated as the following MIECP
\begin{align}\label{eq_num_study_exp_cone3}\footnotesize
	\min_{\bm{\theta}\in \Re^d, \bm{z}\in \{0,1\}^{ d},\bm{p}_1,\bm{p}_2} \left\{\sum_{i\in [n]} t_i+\lambda \|\bm{\theta}\|_1: \begin{array}{c}	
		\sum_{ j\in [d]} z_{ j}=k, \bm{\theta}-M_{\theta}\bfz \leq 0, -\bm{\theta}-M_{\theta}\bfz \leq 0,\\
		p_{i1}+p_{i2}=1, \forall i \in [n],\\
		\left(p_{i1}, 1,(1-2y_i)\bm{\theta}^{\top}\bfx_i-t_i\right)\in K_{\exp}(0), \forall i \in [n],\\
		\left(p_{i2}, 1,-t_{i}\right)\in K_{\exp}(0), \forall i \in [n]
	\end{array}
	\right\}.
\end{align}

In this subsection, we conduct two experiments to test the proposed SOC and polyhedral approximations for solving \eqref{eq_num_study_exp_cone3}, where we consider moderate-scale and large-scale cases and compare our results with MOSEK. In our testing instances, we set $k=20$, $\lambda=0.01$.
Overall, the proposed approximation methods can effectively solve SLR using real datasets and outperform MOSEK. 

\noindent\textbf{Experiment 5- Moderate-Scale SLR Cases:} In this experiment, we solve SLR with the UCI student mathematics performance dataset \citep{cortez2008using}
using MOSEK, Example \ref{emp_3} with Best Scale, and Branch and Cut methods. These three are identified as the best ones in the previous experiments. In particular, in Example \ref{emp_3} with Best Scale, we use the obtained approximate solution to update the $M_{\theta}$ value; and to avoid numerical issues, when implementing the Branch and Cut algorithm, we first run the gradient descent method to solve the continuous relaxation of SLR and then adding all the gradient inequalities into the root node of the branch and bound tree. This dataset contains $31$ variables and a binary experimental class, i.e., the positive class for the grade above the median and the negative class for otherwise. We also use the linear kernel to increase features to ${ d = 496}$, and we consider the number of data points being $n\in\{10 20, \ldots, 100\}$. 

\begin{wraptable}{r}{0.45\textwidth}
	\vspace{-15pt}
	\centering
	\caption{Numerical Results of Moderate-Scale SLR}
	\vspace{-5pt}
	\begin{threeparttable}
		\scriptsize\setlength{\tabcolsep}{1.0pt}
		\begin{tabular}{c|rr|rrr|rrr}
			\hline
			$d = 496$ & \multicolumn{2}{c|}{MOSEK} & \multicolumn{3}{c|}{Example \ref{emp_3} Best Scale\tnote{i}} & \multicolumn{3}{c}{Branch and Cut} \\ \hline
			$n$ & \multicolumn{1}{c}{Gap} & \multicolumn{1}{c|}{Time (s)} & \multicolumn{1}{c}{Gap} & \multicolumn{1}{c}{Time (s)} & \multicolumn{1}{c|}{Ratio} & \multicolumn{1}{c}{Gap} & \multicolumn{1}{c}{Time (s)} & \multicolumn{1}{c}{Ratio} \\ \hline
			10 & --- & 14.26 & --- & 0.35 & 0.02 & --- & 10.26 & 0.72  \\
			20 & --- & 21.63 & --- & 1.04 & 0.05 & --- & 35.42 & 1.64  \\
			30 & --- & 1.90 & --- & 1.84 & 0.97 & --- & 60.54 & 31.88 \\
			40 & --- & 338.50 & --- & 1.75 & 0.01 & --- & 65.66  & 0.19  \\
			50 & --- & 9.86 & --- & 4.88 & 0.49 & --- & 117.96 & 11.97 \\
			60 & --- & 9.05 & --- & 5.90 & 0.65 & --- & 98.98  & 10.94 \\
			70 & --- & 791.01 & --- & 8.63 & 0.01 & n/a & n/a & n/a \\
			80 & --- & 954.47 & --- & 4.55 & 0.00 & n/a & n/a & n/a \\
			90 & --- & 306.48 & --- & 17.19 & 0.06 & n/a & n/a & n/a \\
			100 & --- & 407.08 & --- & 12.20 & 0.03 & n/a & n/a & n/a \\ \hline
			Geo Mean &     &        &     &       & 0.05 &     &        & 3.14 \\ \hline
		\end{tabular}
		\begin{tablenotes}
			\item[i] $N=4$ and in the Best Scale procedure, we first run Example \ref{emp_3} with 3-tuple $(N=4, AS=2^{-2}, TL=10)$. 
		\end{tablenotes}
	\end{threeparttable}
	\label{slr_small_table_studentm}
	\vspace{-20pt}
\end{wraptable}

Table \ref{slr_small_table_studentm} summarizes the results for moderate-scale SLR instances, where only the cases with the Gap greater than $10^{-4}$ or running time within the time limit are displayed. Both MOSEK and Example \ref{emp_3} with Best Scale can solve all the cases with the Gap being less than $10^{-4}$. Notably, the polyhedral approximation method, i.e., Branch and Cut with the first-order method, can only solve the cases when $n\leq 60$ within the time limit of $3600$ seconds, and it takes a much longer time than the other two methods. This may be because the big-M coefficient causes numerical difficulty for the MILP solver. Example \ref{emp_3} with Best Scale is the best among all the methods since its average running time is only $5\%$ of that of MOSEK. Particularly, Example \ref{emp_3} with Best Scale needs four SOC constraints to approximate each exponential conic constraint. It is worthy of emphasizing that we find the approximate solution for Example \ref{emp_3} with Best Scale by executing Example \ref{emp_3} with the approximate solution equal to $2^{-2}$ and a 10-second time limit. In summary, in this experiment, our best method (i.e., Example \ref{emp_3} Best Scale) on average is $95\%$ shorter than MOSEK, with Gap being less than $10^{-4}$.

\begin{wraptable}{r}{0.35\textwidth}
	\vspace{-15pt}
	\centering
	\caption{Numerical Results of Large-Scale SLR}
	\vspace{-5pt}
	\label{slr_large_table_oral}
	\begin{threeparttable}
		\scriptsize\setlength{\tabcolsep}{1.0pt}
		\begin{tabular}{c|r|rr}
			\hline
			$d=1024$ & \multicolumn{1}{c|}{MOSEK} & \multicolumn{2}{c}{Example \ref{emp_3} Best Scale\tnote{i}} \\ \hline
			$n$ & \multicolumn{1}{c|}{obj.val} & \multicolumn{1}{c}{obj.val} & \multicolumn{1}{c}{obj.impr (\%)} \\ \hline
			100 & 0.4675 & 0.4272 & 8.61 \\
			200 & 0.5303 & 0.5127 & 3.31 \\ 
			300 & 0.5899 & 0.5315 & 9.89 \\ 
			400 & 0.5575 & 0.5456 & 2.13 \\ 
			500 & 0.5512 & 0.5466 & 0.83 \\ 
			600 & 0.5438 & 0.5408 & 0.54 \\
			700 & 0.5625 & 0.5581 & 0.79 \\
			800 & 0.6014 & 0.5593 & 7.01 \\
			900 & 0.5974 & 0.5684 & 4.85 \\
			1000 & 0.5673 & 0.5624 & 0.87 \\ \hline
		\end{tabular}
		\begin{tablenotes}
			\item[i] $N=1$ and in the Best Scale procedure, we first run Example \ref{emp_3} with 3-tuple $(N=2, AS=2^{-2}, TL=30)$. 
		\end{tablenotes}
	\end{threeparttable}
	\vspace{-25pt}
\end{wraptable}

\noindent\textbf{Experiment 6- Large-Scale SLR Cases:} In this experiment, we solve the UCI oral toxicity dataset \citep{ballabio2019integrated} 
with the time limit of $600$ seconds using MOSEK and Example \ref{emp_3} with Best Scale methods since Experiment 5 shows that the Branch and Cut method does not work well for SLR cases. Similarly, in Example \ref{emp_3} with Best Scale, we use the obtained approximate solution to update the $M_{\theta}$ value. This dataset contains $d = 1024$ binary variables and one binary experimental class, i.e., the positive class for very toxic and the negative class for not very toxic. We randomly select $500$ data points from each class to form a new dataset with $1000$ data points and consider $n\in\{100, 200, \ldots, 1000\}$.  Our solution time limit is set to be $600$ seconds since machine learning problems often require finding a good-quality solution within a short amount of time.

Table \ref{slr_large_table_oral} summarizes the results for large-scale SLR cases, where the objective value (denoted as ``obj.val") by substituting the obtained solution into the original SLR objective function in \eqref{eq_num_study3} and the relative gap (denoted as ``obj.impr") between the obj.val of MOSEK and the obj.val of  Example \ref{emp_3} with Best Scale are displayed. With the time limit of $600$ seconds, both methods cannot be solved to optimality; however, Example \ref{emp_3} with Best Scale outputs better objective values than MOSEK. Particularly, Example \ref{emp_3} with Best Scale only needs one SOC constraint to approximate each exponential conic constraint. It is worthy of mentioning that we find the approximate solution for Example \ref{emp_3} with Best Scale by executing Example \ref{emp_3} with the approximate solution equal to $2^{-2}$ and a 30-second time limit. The relative gaps between the obj.vals of MOSEK and Example \ref{emp_3} Best Scale for different cases vary from $0.54\%$ to $9.89\%$. Overall, in this experiment, our best method (i.e., Example \ref{emp_3} Best Scale) consistently outperforms MOSEK on solving large-scale SLR instances by providing better quality solutions.


\section{Conclusion}\label{sec_conclusion}
This paper studies the approximation schemes of the mixed-integer exponential conic programs (MIECPs). We generalize and extend the existing second-order conic approximation scheme and propose new scaling and shifting methods. We also prove approximation accuracies and derive lower bounds of approximation results. We study the polyhedral outer approximation of the exponential cones in the original space based on gradient inequalities. Our numerical study shows that the scaling, shifting, and polyhedral outer approximation methods work very well and can consistently outperform MOSEK with 5-20 times speed-ups. We are working on developing valid inequalities for MIECPs by exploring submodularity and studying disjunctive cuts.

\bibliography{Reference}
\newpage
\titleformat{\section}{\large\bfseries}{\appendixname~\thesection .}{0.5em}{}
\begin{appendices}

	\section{Proofs}\label{sec_proof}
	\subsection{Proof of Proposition \ref{prop_conic_sos}}\label{sec_proof_prop_conic_sos}
	\textbf{Proposition \ref{prop_conic_sos}} \textit{The coefficients in \eqref{eq_fun} can be found recursively as}
	\begin{align}\tag{\ref{eq_coeff}}
		\alpha_{s}=1,\beta_s=1, \sum_{j\in [0,k]}\frac{\alpha_{s-j}\beta_{s-j}^{2k-2j}}{(2k-2j)!}=1,
		\sum_{j\in [0,k]}\frac{\alpha_{s-j}\beta_{s-j}^{2k-2j+1}}{(2k-2j+1)!}=1, \forall k \in [0,s-1].
	\end{align}
	\begin{proof}
		We observe that
		\[(\beta_k+y)^{2k}=\sum_{i\in [0,2k]}{2k \choose i}\beta_{k}^{i}y^{2k-i}, \forall k\in [0,s],\]
		which allows us to find the coefficients for each $y^k, k \in [0, 2s]$ 
		\[ \hat{\psi}_{N,2s}(y)=\sum_{j\in [0,s]}\frac{\alpha_j}{(2j)!}(\beta_j +y)^{2j}.\]
		For $ k \in [0, 2s]$, the coefficient for $y^k$ is 
		\[\begin{cases}
			\sum_{i\in [0,s-k/2]}\frac{\alpha_{s-i}\beta_{s-i}^{2s-2i-k} }{(2s-2i-k)!k!}&\textrm{\;if $k$ is even,\;} \\
			\sum_{i\in [0,s-(k+1)/2]} \frac{\alpha_{s-i}\beta_{s-i}^{2s-2i-k} }{(2s-2i-k)!k!}&\textrm{\;if $k$ is odd.}
		\end{cases}\]
		In the expression of $\hat{\psi}_{N,2s}(y)=\sum_{i\in [0,2s]}y^i/i!$, we also know that the coefficient for $y^k$ is $1/k!$ for each $k \in [0, 2s]$. Since
		\[\hat{\psi}_{N,2s}(y)=\sum_{i\in [0,2s]}\frac{y^i}{i!}=\sum_{j\in [0,s]}\frac{\alpha_j}{(2j)!}(\beta_j +y)^{2j},\]
		for each $k \in [0, 2s]$, we can obtain
		\[\frac{1}{k!}=\begin{cases}
			\sum_{i\in [0,s-k/2]}\frac{\alpha_{s-i}\beta_{s-i}^{2s-2i-k} }{(2s-2i-k)!k!}&\textrm{\;if $k$ is even,\;} \\
			\sum_{i\in [0,s-(k+1)/2]} \frac{\alpha_{s-i}\beta_{s-i}^{2s-2i-k} }{(2s-2i-k)!k!}&\textrm{\;if $k$ is odd.}
		\end{cases} \]
		Solving these equations, we arrive at the conclusion.
		\QEDA
	\end{proof}
	
	\section{Small-Scale Binary Packing MIECP Results}\label{sec_small}
	
	Figure \ref{fig_ex1} shows the Gap and running time for Example \ref{emp_1} with $N\in\{10, 15, 20\}$ and $a\in\{0.90, 0.95, \ldots, 1.10\}$. As shown in Figures \ref{fig_ex1_gap_k10}-\ref{fig_ex1_gap_k20}, for a given $a$, the Gap can be improved in general by increasing $N$. We see that the gap is greater than $10^{-4}$ for all $a$ when $N=10$. When $N=15$, using Example \ref{emp_1} with $a\in\{1.00, 1.05, 1.10\}$ can solve several cases with the gap being no larger than $10^{-4}$. When $N=20$, Example \ref{emp_1} with $a\in\{1.00, 1.05, 1.10\}$ can solve all the cases with the Gap being within $10^{-4}$. We notice that when $N$ increases, the improvement of Gap when $a\in\{0.90, 0.95\}$ is not significant compared to the other $a$ values. In particular, since Gurobi uses $10^{-4}$ as the default Gap, the improvement due to increasing $N$ might not be evident; see $a=1.05, 1.10$ in Figures \ref{fig_ex1_gap_k15} and \ref{fig_ex1_gap_k20}, for instance. As shown in Figures \ref{fig_ex1_time_k10}-\ref{fig_ex1_time_k20}, Gurobi spends a longer time on solving the problem with larger $p$ or $N$ values in general, since a larger $p$ or $N$ value means more SOC constraints. Given the same setting of $p$ and $N$, the running time using Example \ref{emp_1} for different $a$ values is nearly the same. On average, Gurobi spends around $0.5, 1.0, 1.5$ seconds on solving packing MIECP \eqref{eq_num_study_exp_cone} using Example \ref{emp_1} with $N=10, 15, 20$, respectively. Hence, overall, we see that choosing $a=1$ and $N=20$ might be desirable when solving larger instances.
	\begin{figure}[htbp]
		\centering
		\subfloat[0.32\textwidth][Gap for $N=10$]{
			\includegraphics[width=0.32\textwidth]{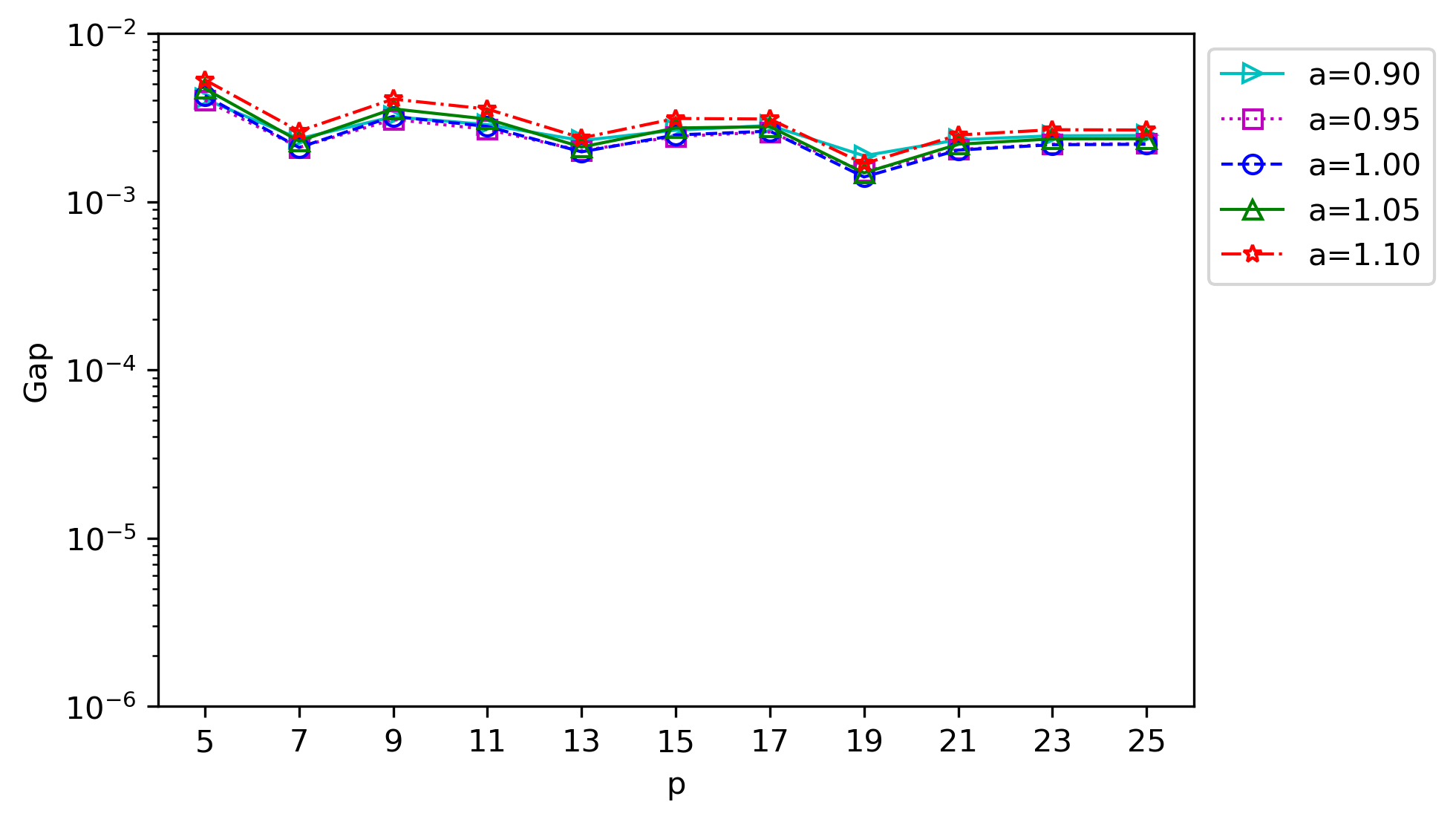} 
			\label{fig_ex1_gap_k10} 
		}\hfill
		\subfloat[0.32\textwidth][Gap for $N=15$]{
			\includegraphics[width=0.32\textwidth]{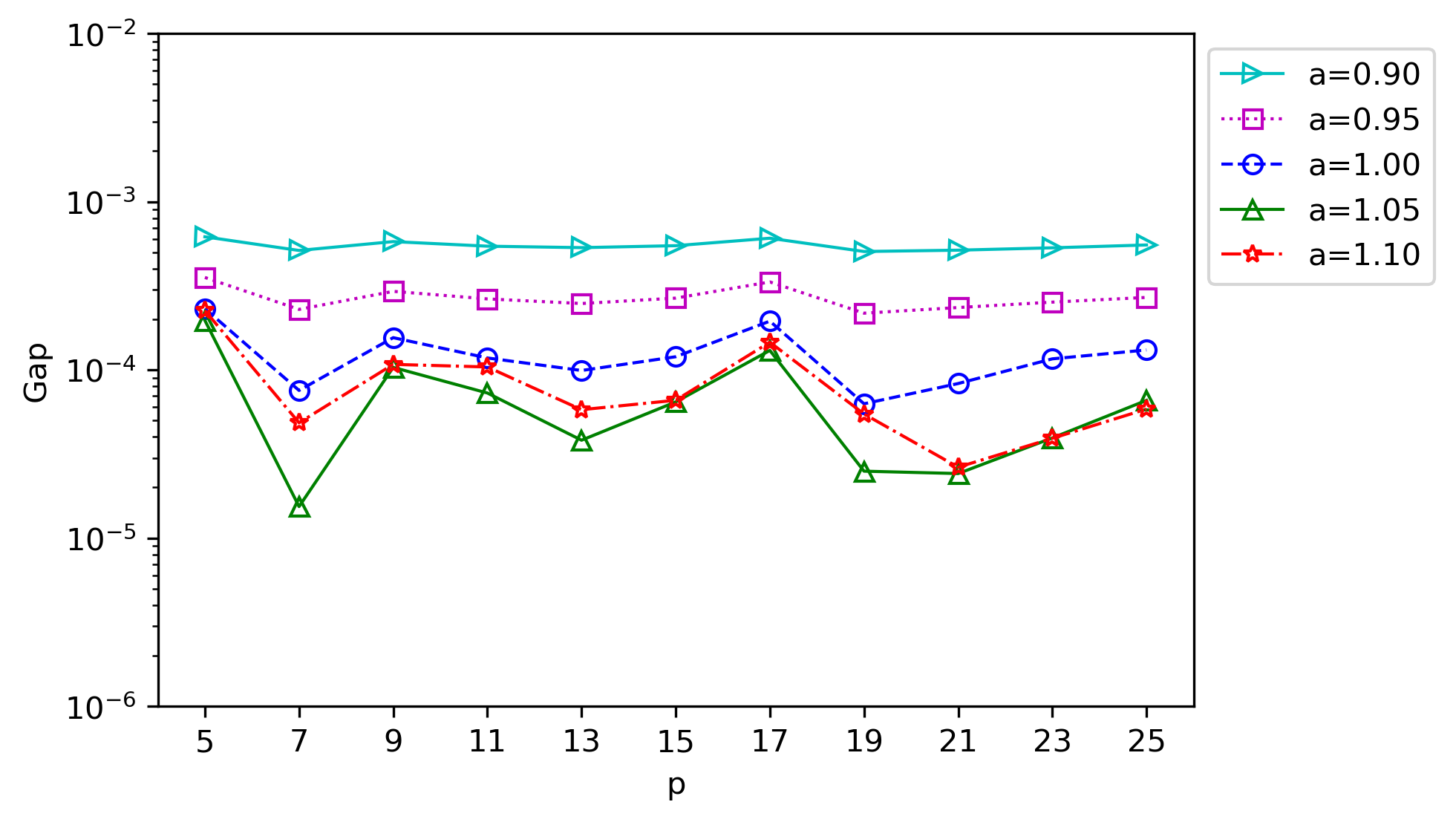} 
			\label{fig_ex1_gap_k15} 
		}\hfill
		\subfloat[0.32\textwidth][Gap for $N=20$]{
			\includegraphics[width=0.32\textwidth]{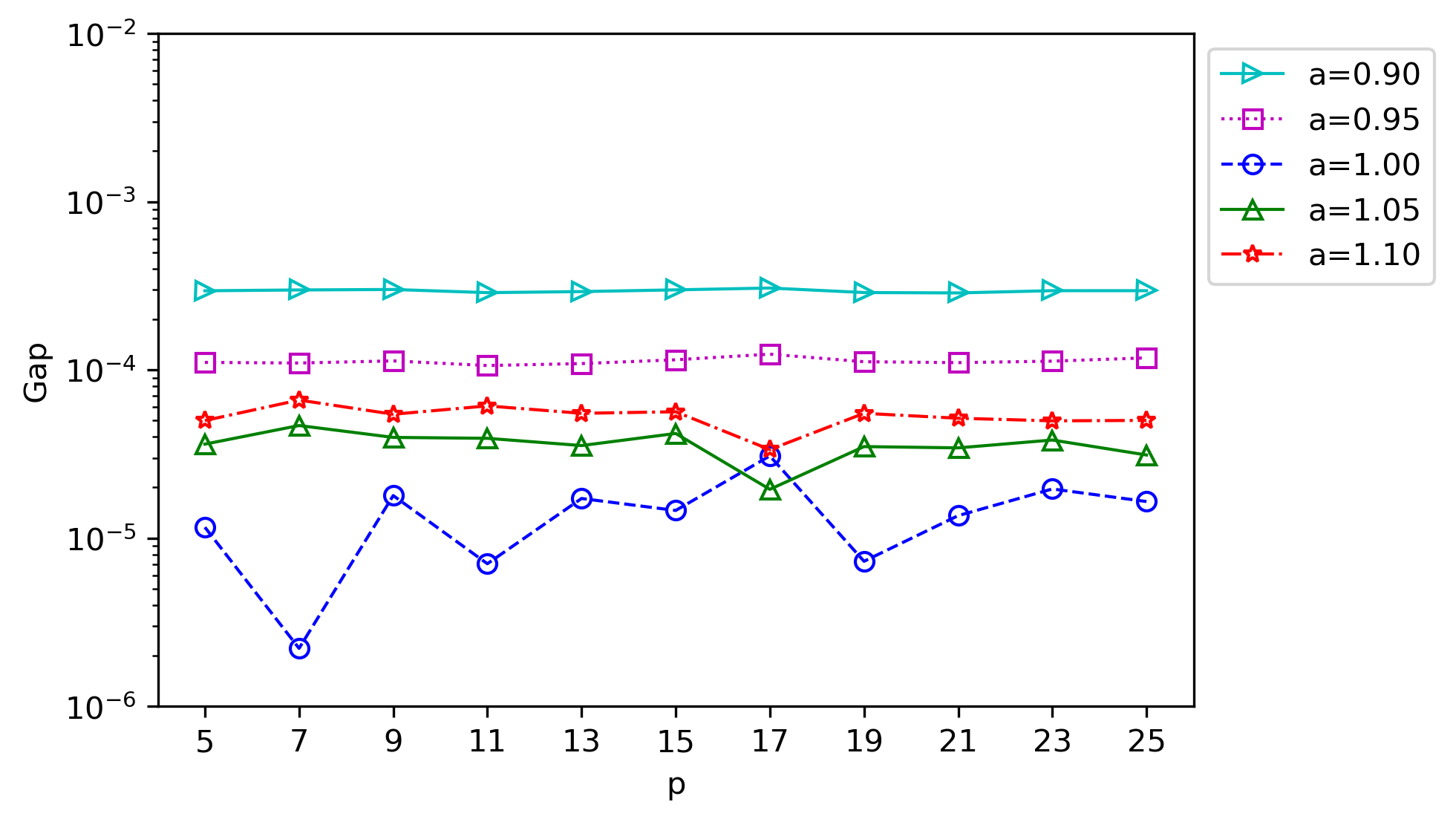} 
			\label{fig_ex1_gap_k20} 
		}\hfill
		\subfloat[0.32\textwidth][Running Time for $N=10$]{
			\includegraphics[width=0.32\textwidth]{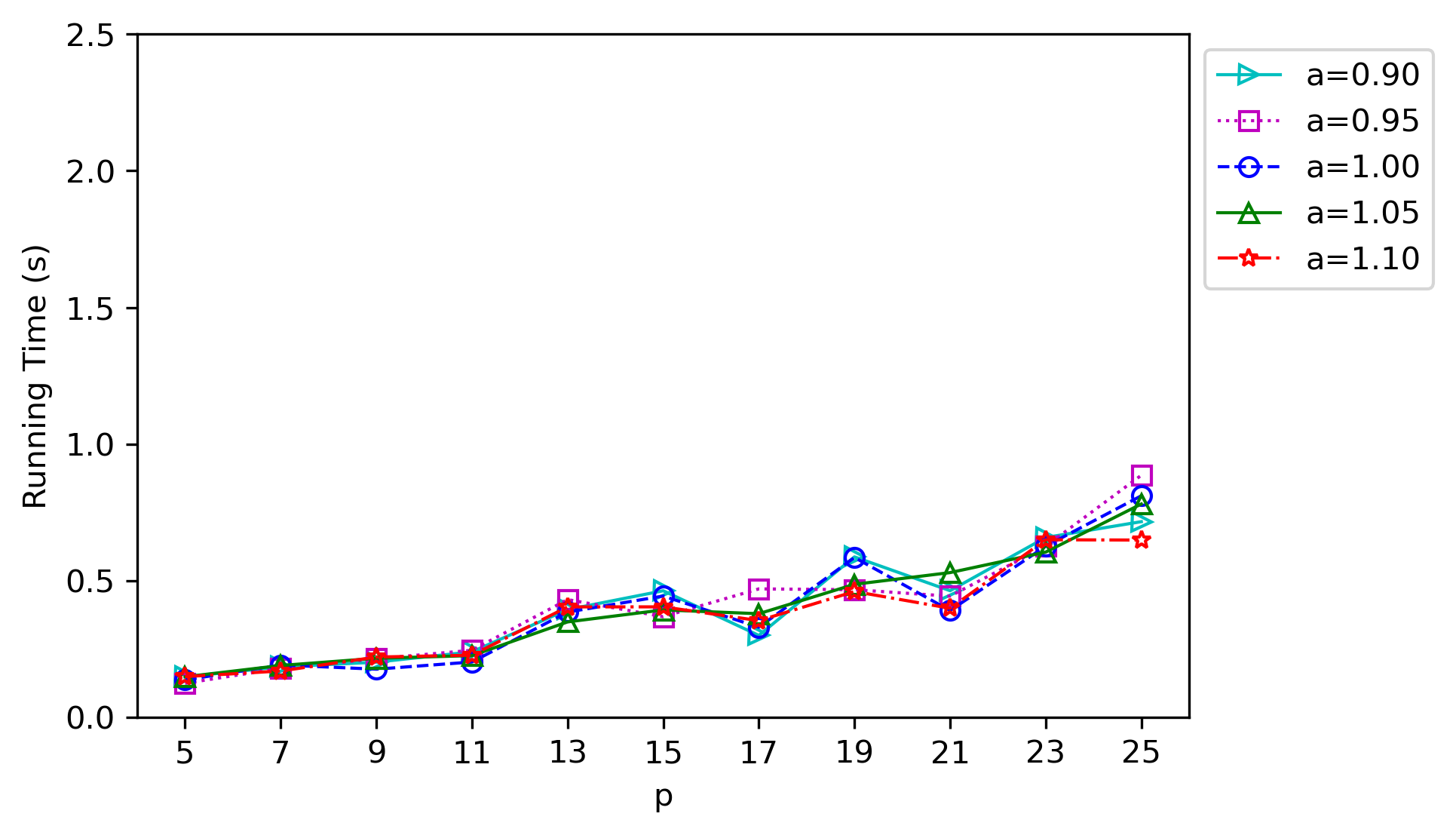} 
			\label{fig_ex1_time_k10} 
		}\hfill
		\subfloat[0.32\textwidth][Running Time for $N=15$]{
			\includegraphics[width=0.32\textwidth]{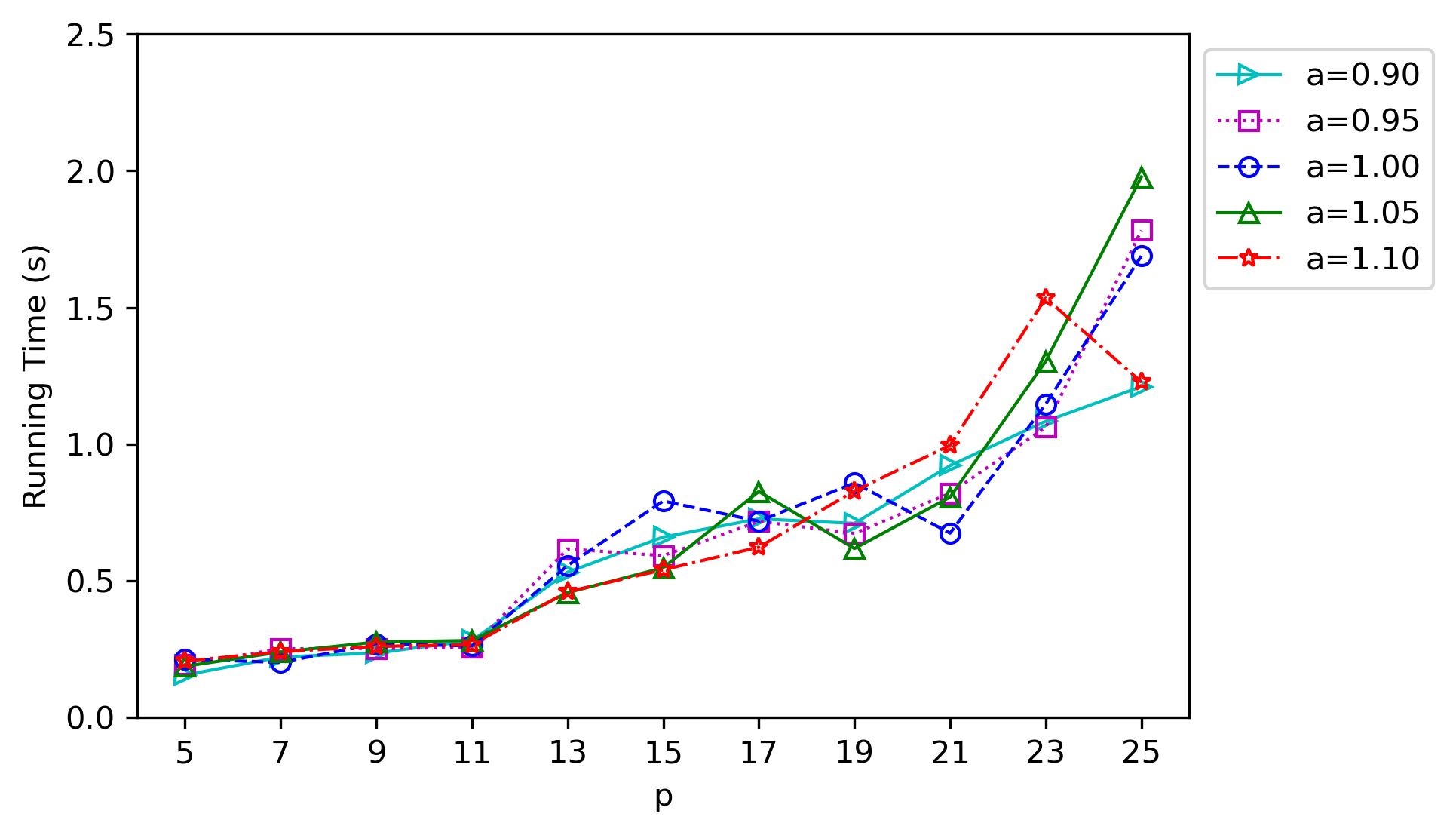} 
			\label{fig_ex1_time_k15} 
		}\hfill
		\subfloat[0.32\textwidth][Running Time for $N=20$]{
			\centering
			\includegraphics[width=0.32\textwidth]{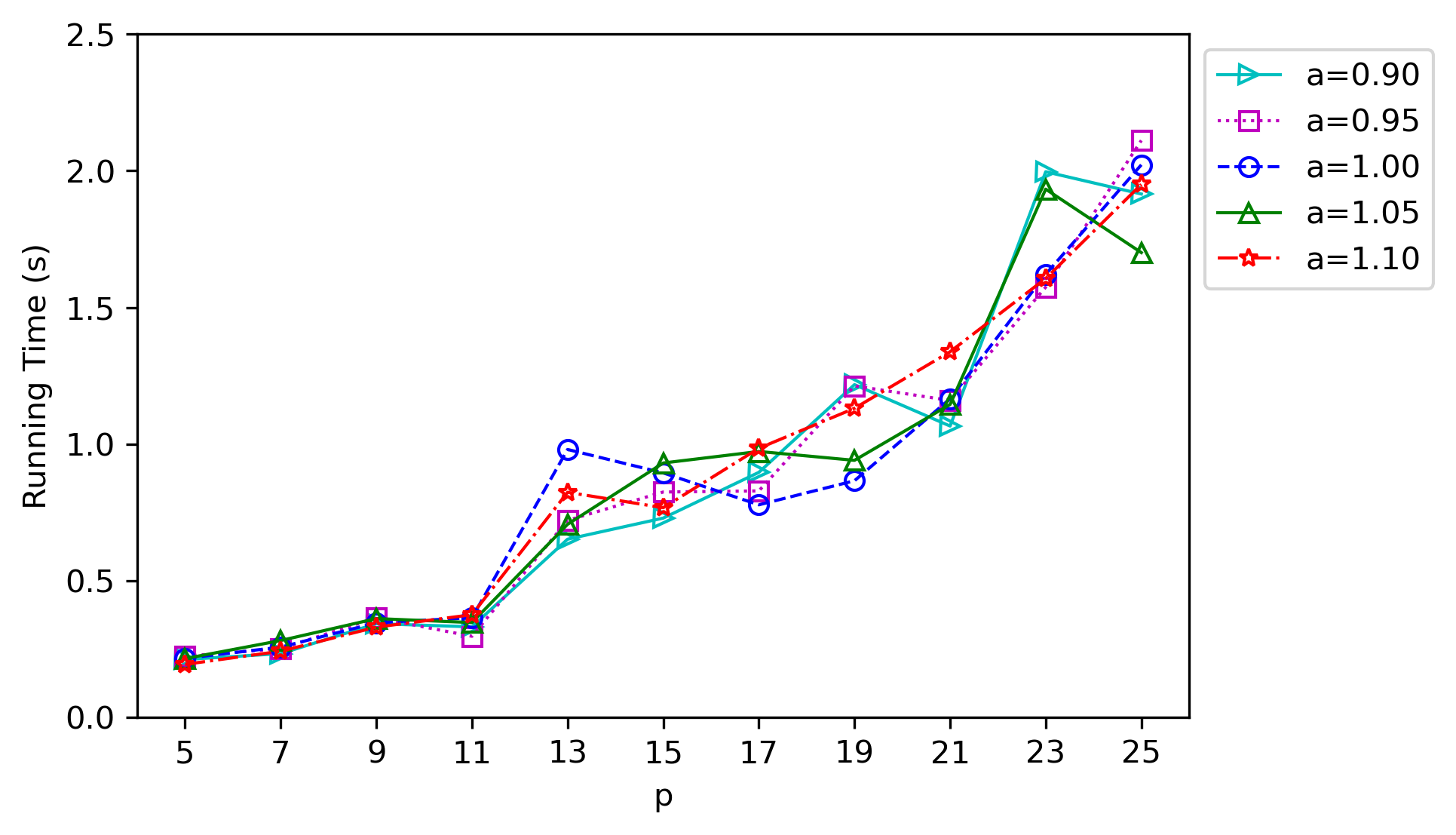} 
			\label{fig_ex1_time_k20}}
		\caption{Numerical Illustration of Example \ref{emp_1}: Small-Scale Binary Packing MIECP}\label{fig_ex1}
	\end{figure}
	
	Figure \ref{fig_ex2} shows the Gap and running time for Example \ref{emp_2} with $N\in\{1, 2, 3\}$ and $s\in\{1, 2, 3, 4, 5\}$. As shown in Figures \ref{fig_ex2_gap_k1}-\ref{fig_ex2_gap_k3}, for each $s$, the Gap can be improved in general by increasing $N$. We see that the gap is greater than $10^{-4}$ for all the $s$ when $N=1$. When $N=2$, Example \ref{emp_2} with $s\in\{4, 5\}$ can solve all the cases with Gap being no larger than $10^{-4}$. When $N=3$, Example \ref{emp_2} with $s\in\{3, 4, 5\}$ can solve all the cases with Gap within $10^{-4}$. With the same $N$, Example \ref{emp_2} using a larger $s$ value can solve the cases with a better Gap. However, due to the default setting of Gurobi, the improvement due to increasing $N$ or $s$ might be unpredictable; see $s=4, 5$ in Figures \ref{fig_ex2_gap_k2}, \ref{fig_ex2_gap_k3} for instance. 
	As shown in Figures \ref{fig_ex2_time_k1}-\ref{fig_ex2_time_k3}, Gurobi spends a longer time solving the instances with larger $p$ or $N$ values in general. For instance, with fixed values $p$ and $N$, using Example \ref{emp_2} with larger $s$ values tends to spend more time on solving the case. The running time of Example \ref{emp_2} with a larger $s$ grows as $p$ increases for the same $N$. In particular, we observe that using Example \ref{emp_2} with both settings $(N, s)=(2, 4)$ and $(N, s)=(3, 3)$ can solve all the cases within $10^{-4}$ Gap and their running time is similar. Overall, Example \ref{emp_2} spends a similar amount of time on solving the cases to a certain level of Gap regardless of the changes of $N$ and $s$. On average, Example \ref{emp_2} spends around $0.2, 0.3, 0.4$ seconds on solving packing MIECP \eqref{eq_num_study_exp_cone} with $N=1, 2, 3$, respectively. Hence, we recommend that choosing $s=4$ might be desirable when solving larger instances.
	\begin{figure}[htbp]
		\centering
		\subfloat[0.32\textwidth][Gap for $N=1$]{
			\includegraphics[width=0.32\textwidth]{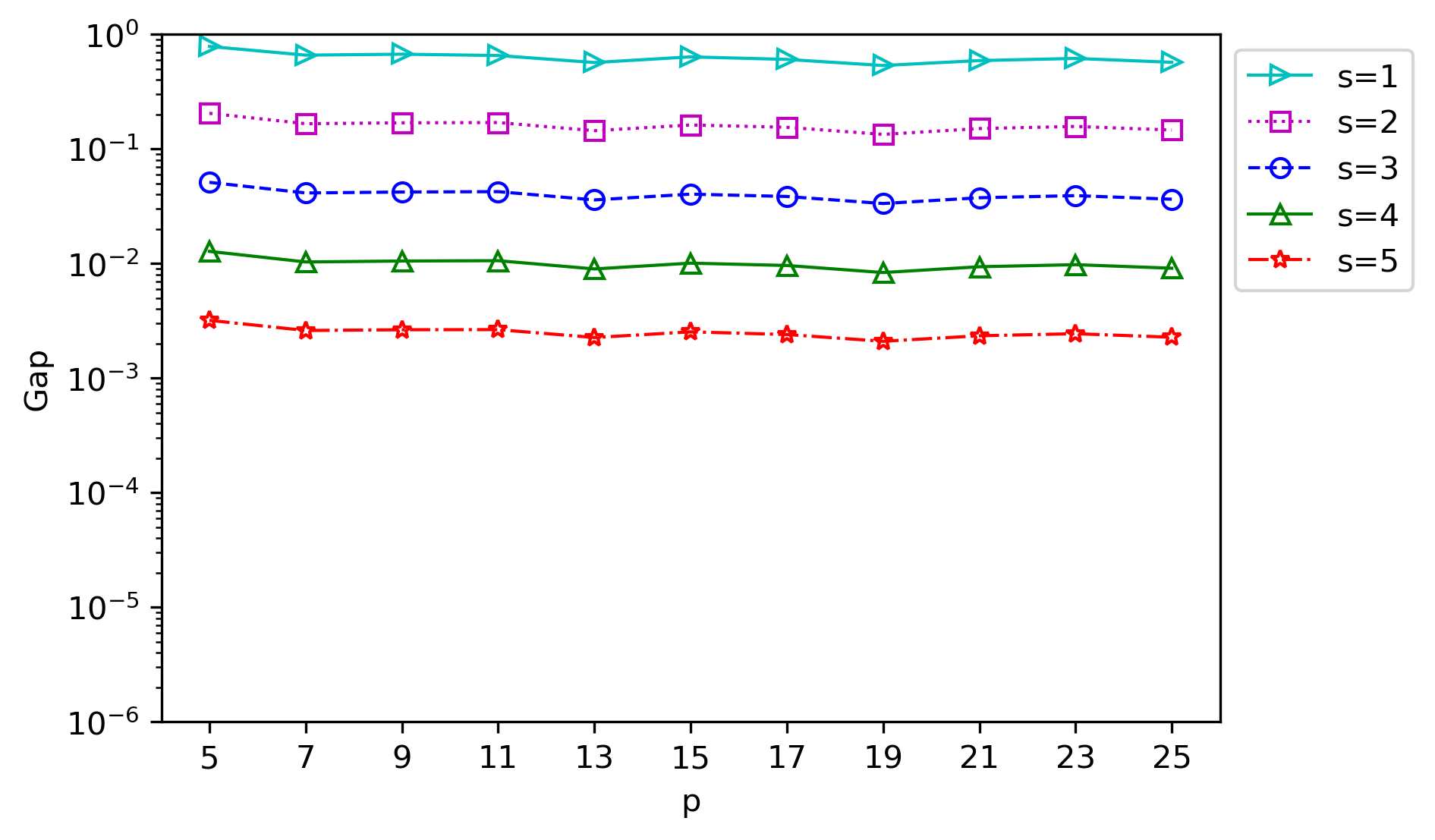} 
			\label{fig_ex2_gap_k1} 
		}\hfill
		\subfloat[0.32\textwidth][Gap for $N=2$]{
			\includegraphics[width=0.32\textwidth]{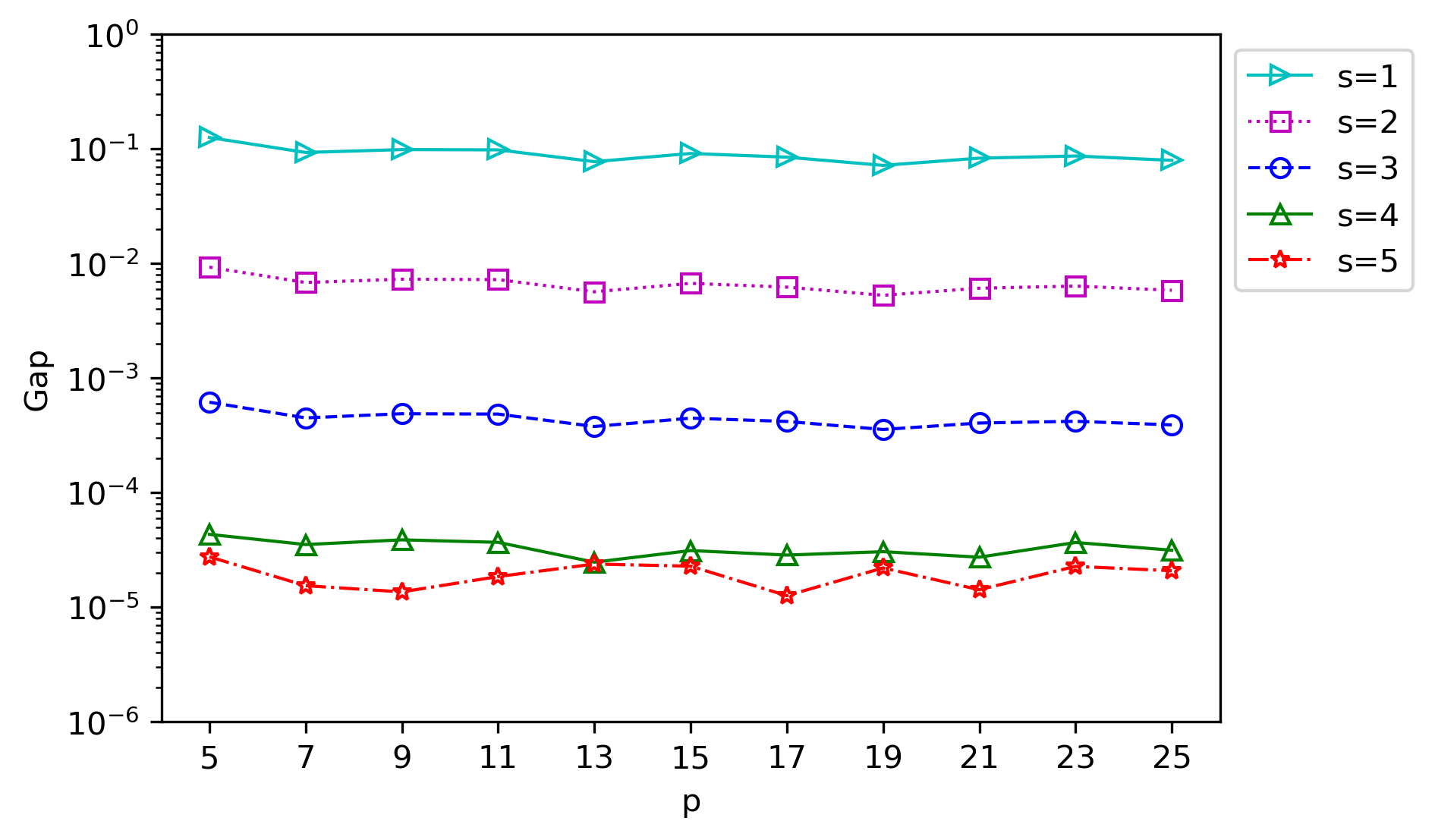} 
			\label{fig_ex2_gap_k2} 
		}\hfill
		\subfloat[0.32\textwidth][Gap for $N=3$]{
			\includegraphics[width=0.32\textwidth]{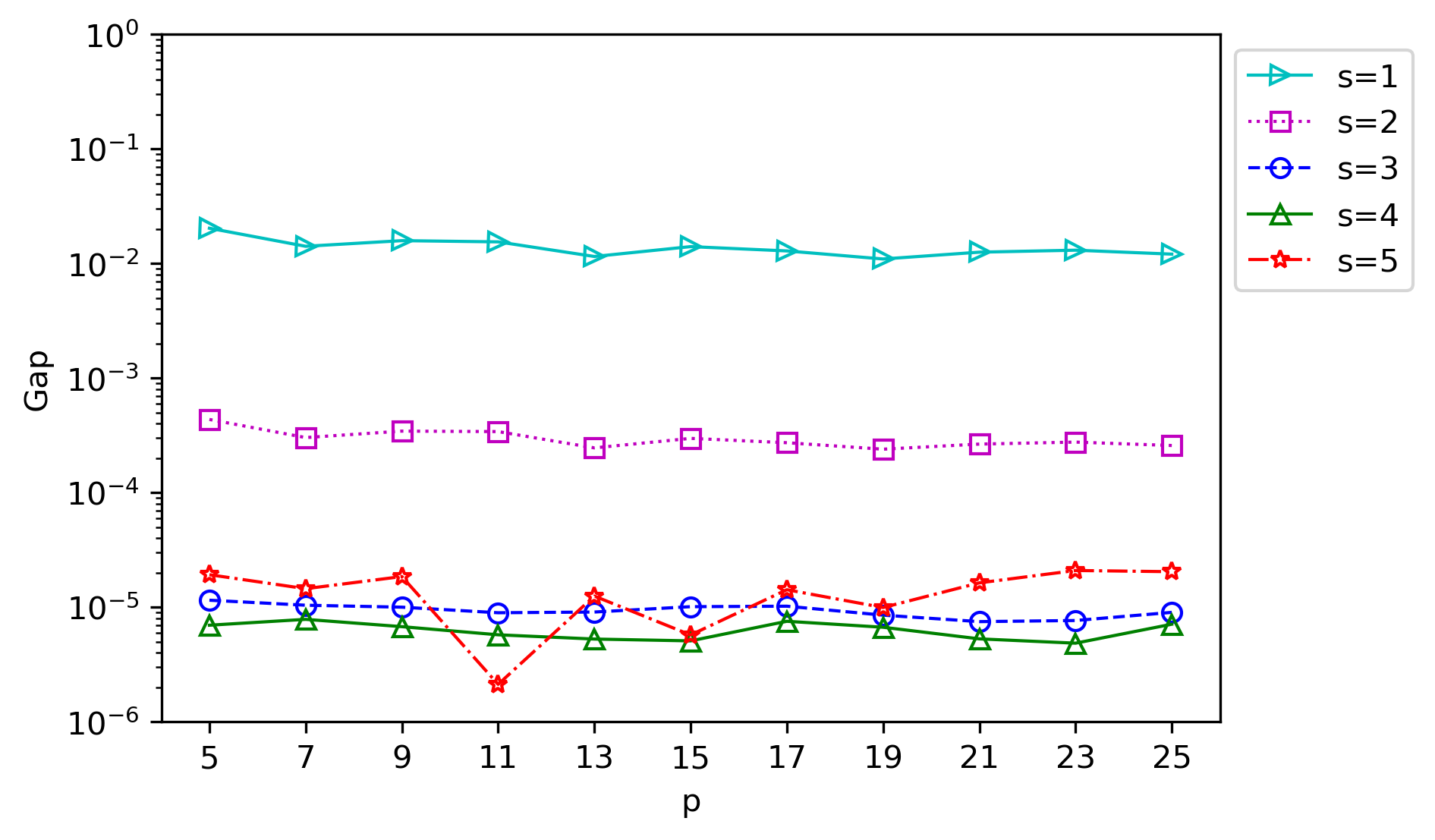} 
			\label{fig_ex2_gap_k3} 
		}\hfill
		\subfloat[0.32\textwidth][Running Time for $N=1$]{
			\includegraphics[width=0.32\textwidth]{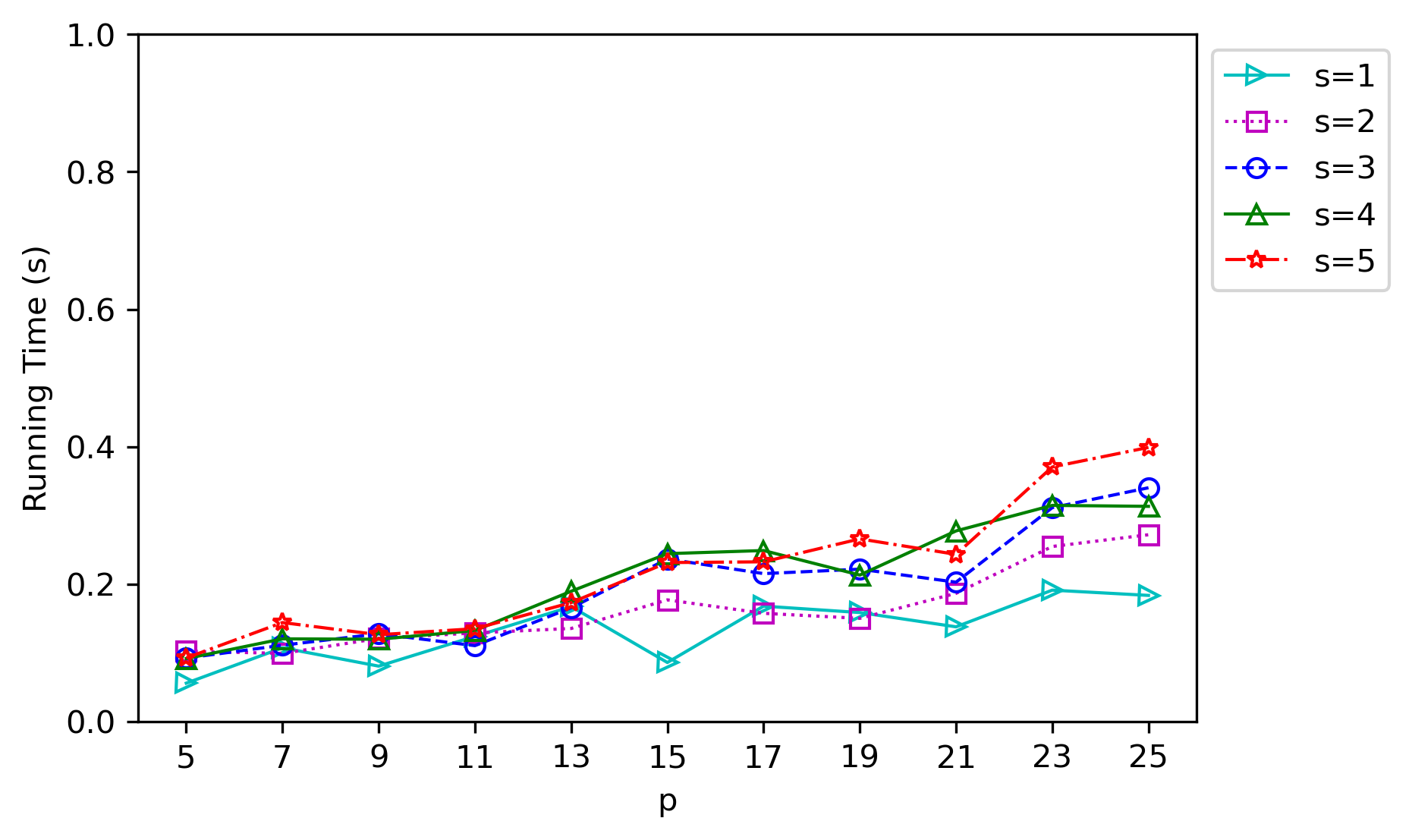} 
			\label{fig_ex2_time_k1} 
		}\hfill
		\subfloat[0.32\textwidth][Running Time for $N=2$]{
			\includegraphics[width=0.32\textwidth]{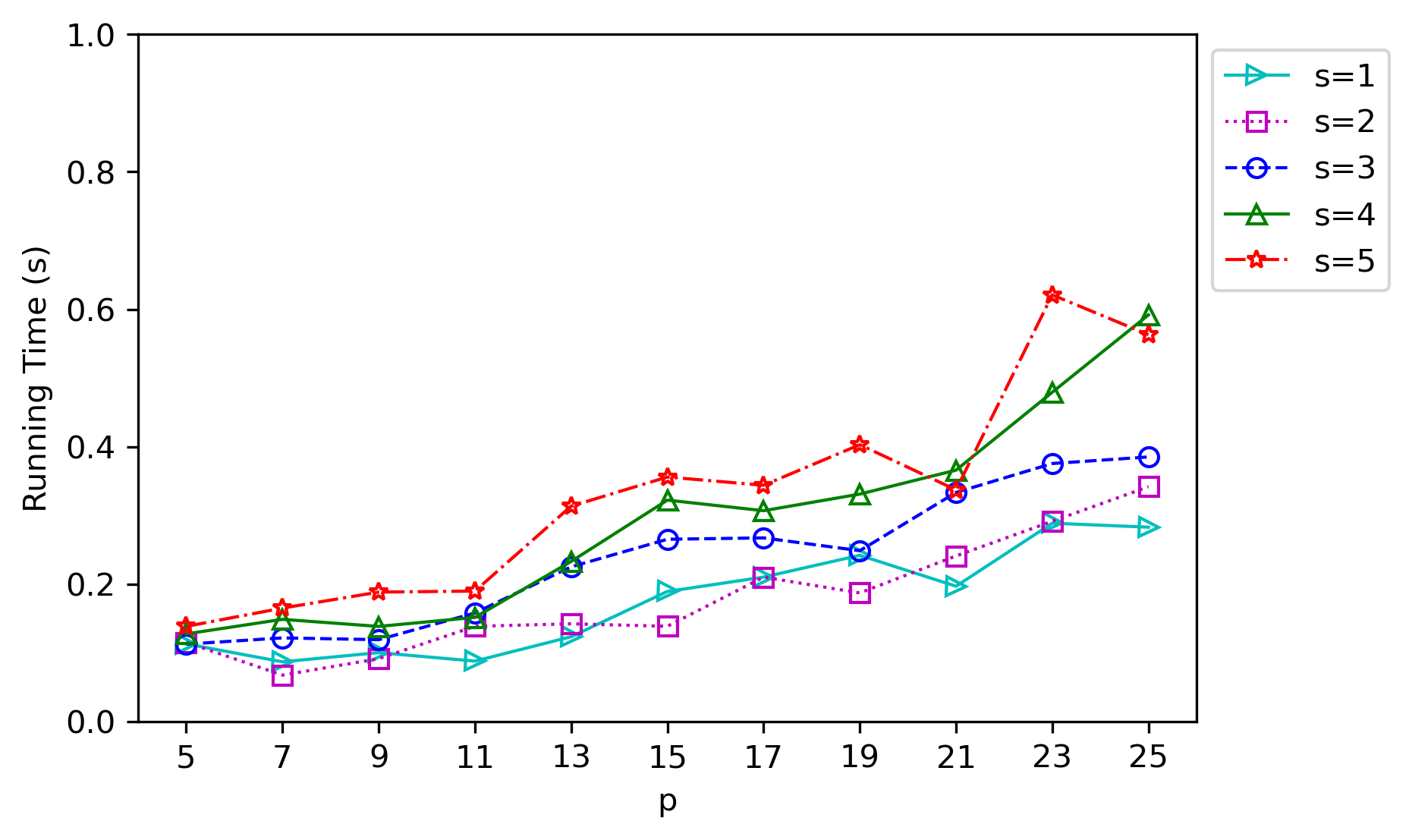} 
			\label{fig_ex2_time_k2} 
		}\hfill
		\subfloat[0.32\textwidth][Running Time for $N=3$]{
			\centering
			\includegraphics[width=0.32\textwidth]{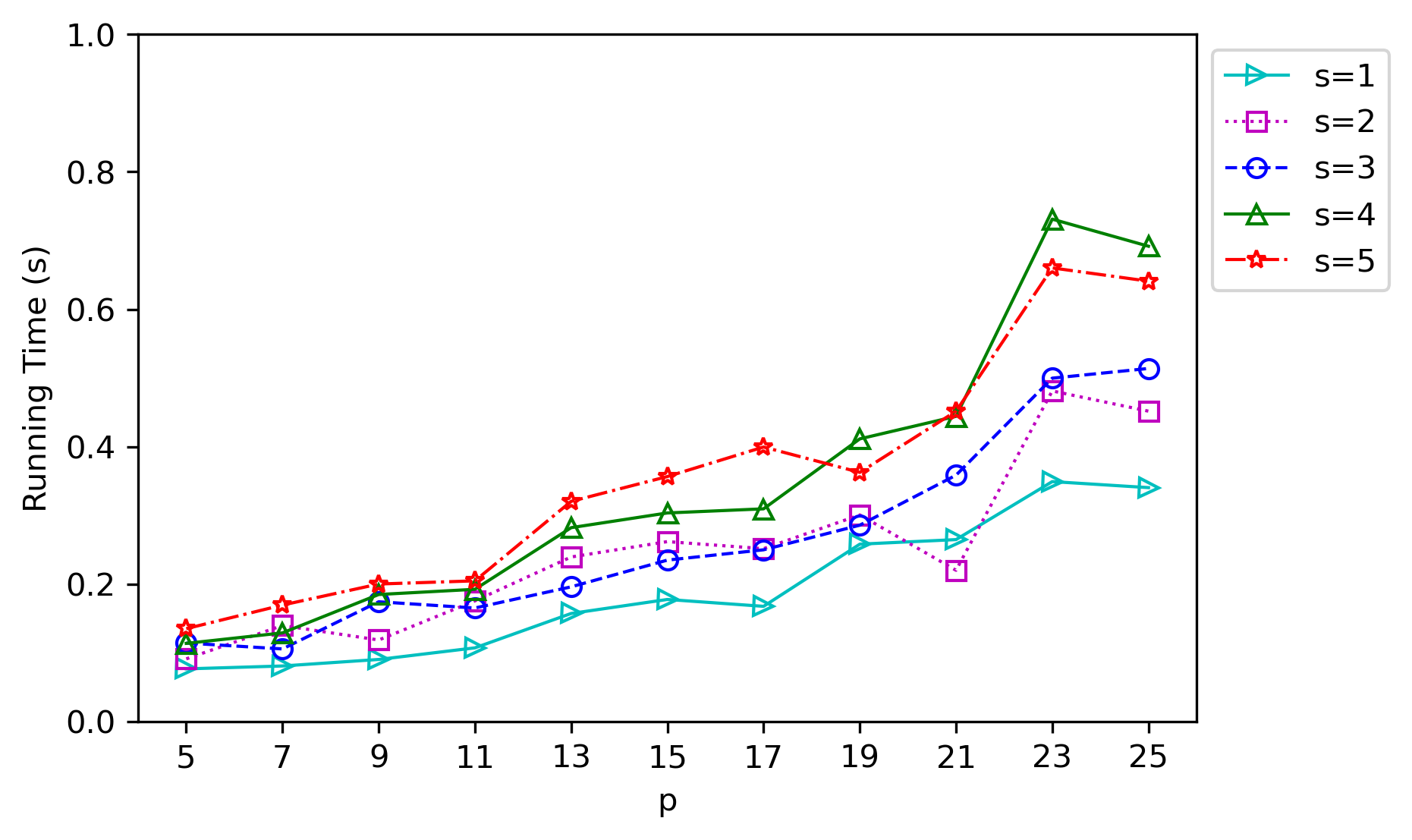} 
			\label{fig_ex2_time_k3}}
		\caption{Numerical Illustration of Example \ref{emp_2}: Small-Scale Binary Packing MIECP}\label{fig_ex2}
	\end{figure}
	
	Figure \ref{fig_ex3} shows the Gap and running time for Example \ref{emp_3} with $N\in\{3, 5, 7\}$ and the approximate solution being selected from the list $\{2^{-6}, 2^{-4},2^{-2}, 2^{0}, \mathrm{Best \;Scale}\}$. Here, ``$\mathrm{Best \;Scale}$" means that we first solve each case using Example \ref{emp_3} with $N=1$ and the approximate solution equal to $2^{-4}$ with a 2-second time limit to find a feasible solution, and then use the found solution to construct the approximate solution for each exponential conic constraint. As shown in Figures \ref{fig_ex3_gap_k3}-\ref{fig_ex3_gap_k7}, for each approximate solution, the Gap can be improved in general by increasing $N$. Only Best Scale can solve all the cases with Gap being no larger than $10^{-4}$ when $N=3$. When $N=5$, Example \ref{emp_3} with the approximate solution being selected from $\{2^{-4}, \mathrm{Best \;Scale}\}$ can solve all the cases with Gap being less than $10^{-4}$. When $N=7$, Example \ref{emp_3} with the approximate solution being selected from $\{2^{-6}, 2^{-4}, \mathrm{Best \;Scale}\}$ can solve all the cases with Gap within $10^{-4}$. We notice that when increasing $N$, the improvement of Gap for the approximate solution being selected from $\{2^{-2}, 2^{0}\}$ are not significant compared to other approximate solutions. Given the same $N$, Example \ref{emp_3} with the approximate solution equal to $2^{-4}$ can solve the case with a better Gap compared to other approximate solutions except for Best Scale. Remarkably, the Best Scale can solve all the cases with Gap being no larger than $10^{-4}$  for $N\in\{3, 5, 7\}$ and its gap is around $10^{-7}$, which outperforms all the other methods based on Example \ref{emp_3} method. In particular, due to the default setting of Gurobi, the improvement due to increasing $N$ might not be obvious; see the cases with the approximate solution equal to $2^{-4}, \mathrm{Best \;Scale}$ in Figures \ref{fig_ex3_gap_k5}, \ref{fig_ex3_gap_k7} for instance. 
	As shown in Figures \ref{fig_ex3_time_k3}-\ref{fig_ex3_time_k7}, Gurobi spends a longer time solving the cases with larger $p$ or $N$ values in general. Given the same setting of $p$ and $N$, the Best Scale method takes a longer time than other methods based on Example \ref{emp_3} since it needs to solve an additional model to generate a feasible solution. The running time for other methods is similar. 
	However, the Best Scale method still works the best since it requires a much smaller number of SOC constraints to achieve the desired accuracy. Therefore, we suggest using $N=1$ or $2$ for the best case and slightly larger $N$ if we predetermine the approximate solution to be equal to $2^{-4}$. 
	\begin{figure}[htbp]
		\centering
		\subfloat[0.32\textwidth][Gap for $N=3$]{
			\includegraphics[width=0.32\textwidth]{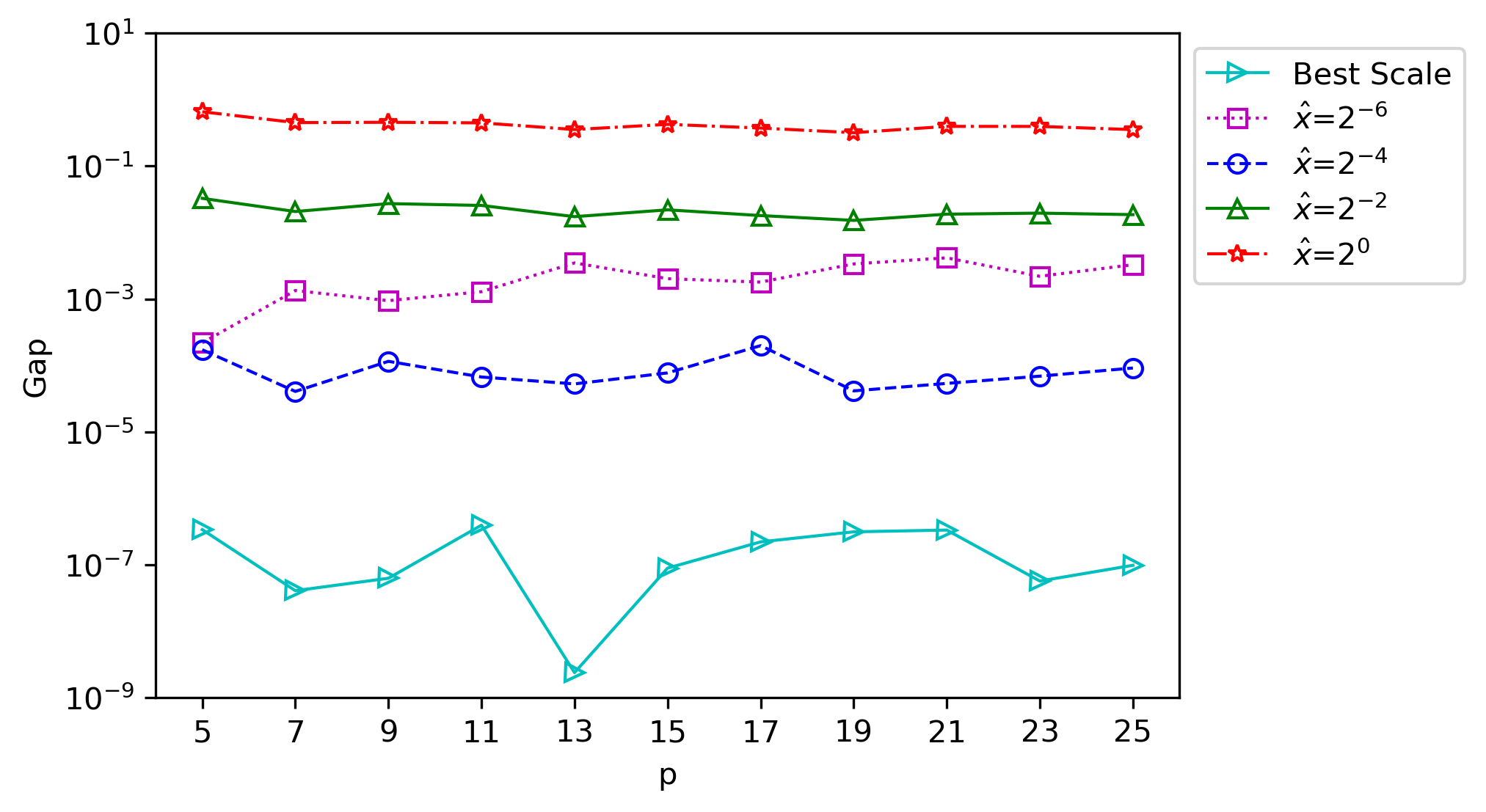} 
			\label{fig_ex3_gap_k3} 
		}\hfill
		\subfloat[0.32\textwidth][Gap for $N=5$]{
			\includegraphics[width=0.32\textwidth]{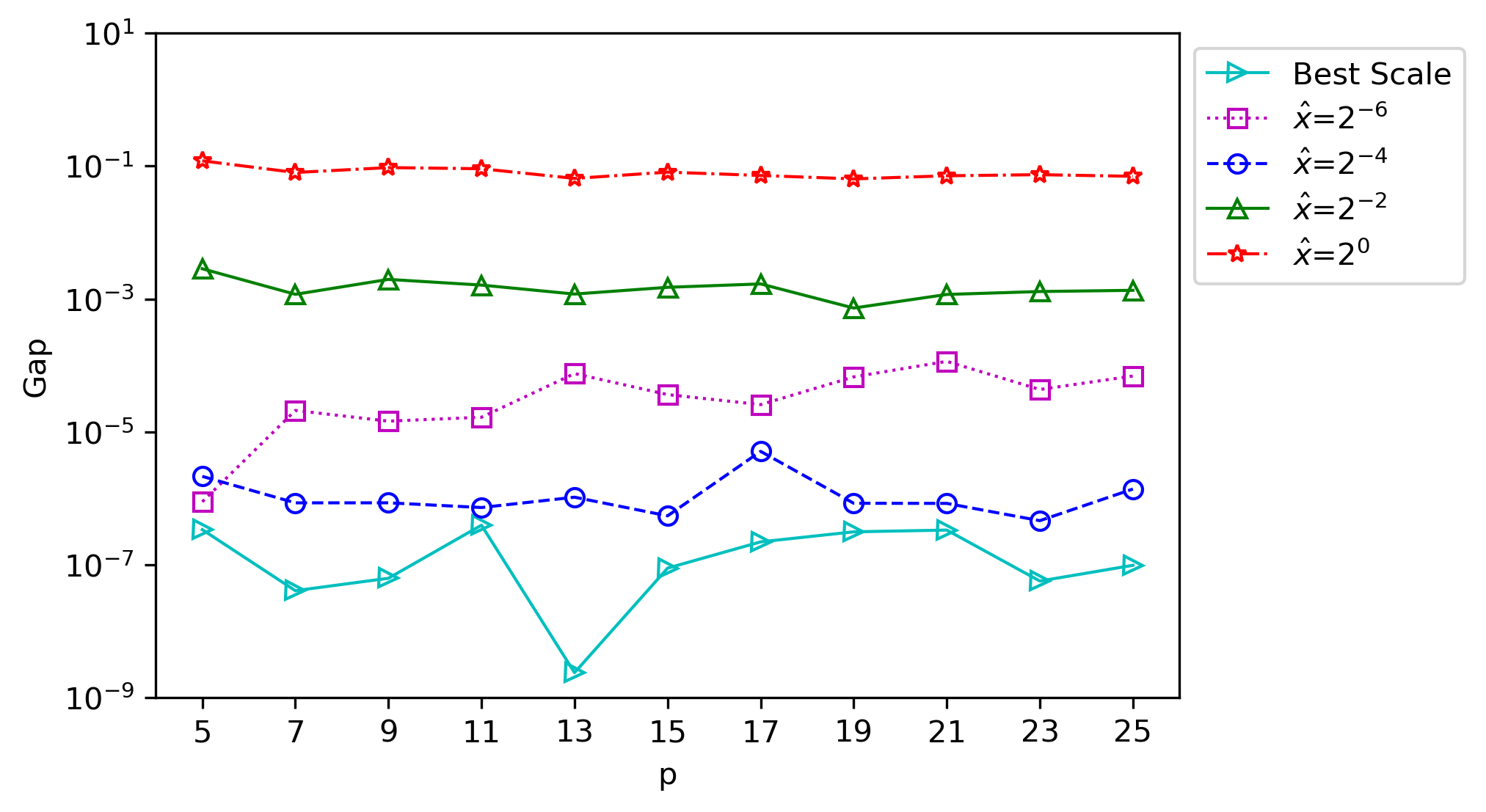} 
			\label{fig_ex3_gap_k5} 
		}\hfill
		\subfloat[0.32\textwidth][Gap for $N=7$]{
			\includegraphics[width=0.32\textwidth]{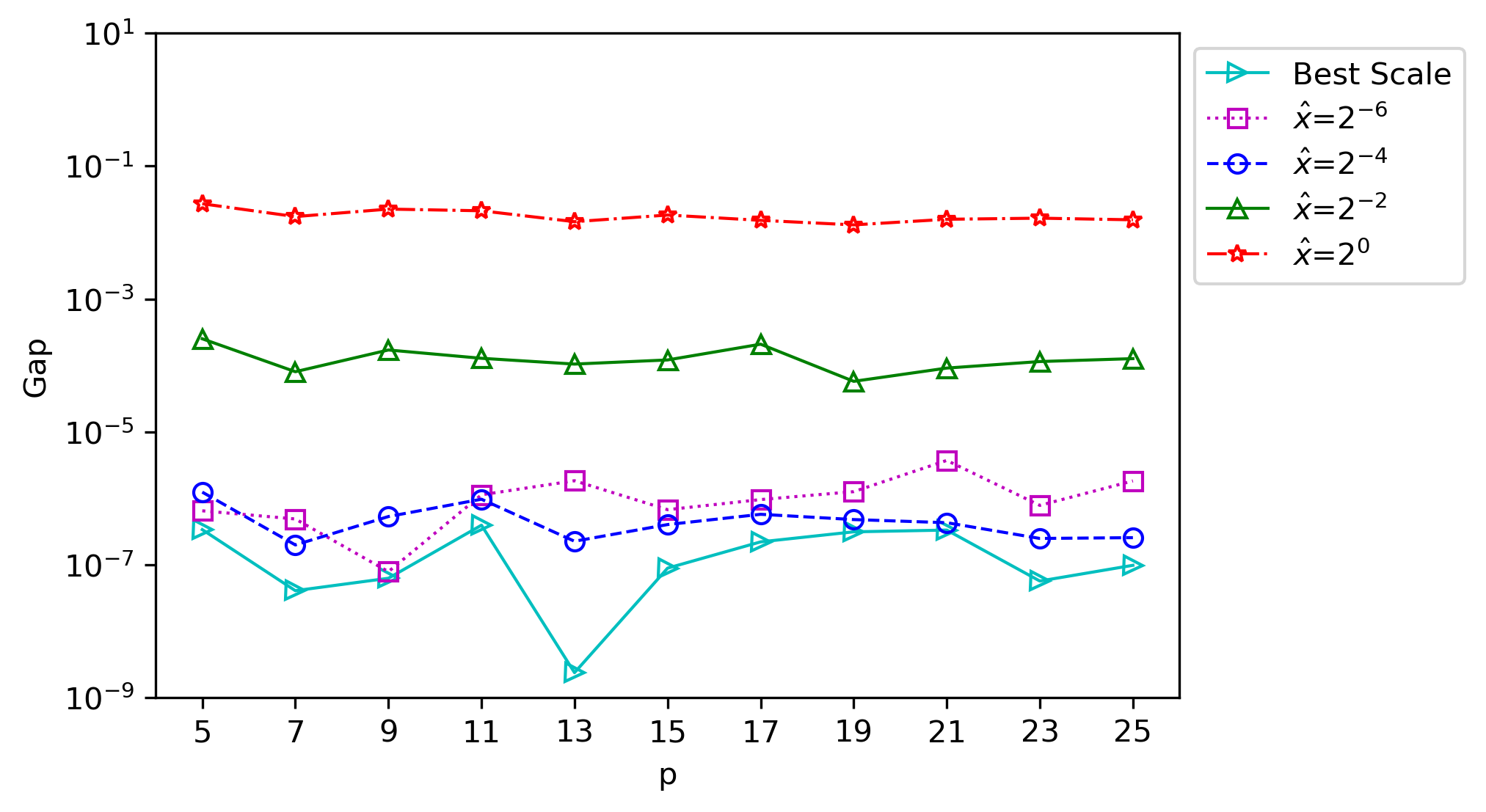} 
			\label{fig_ex3_gap_k7} 
		}\hfill
		\subfloat[0.32\textwidth][Running Time for $N=3$]{
			\includegraphics[width=0.32\textwidth]{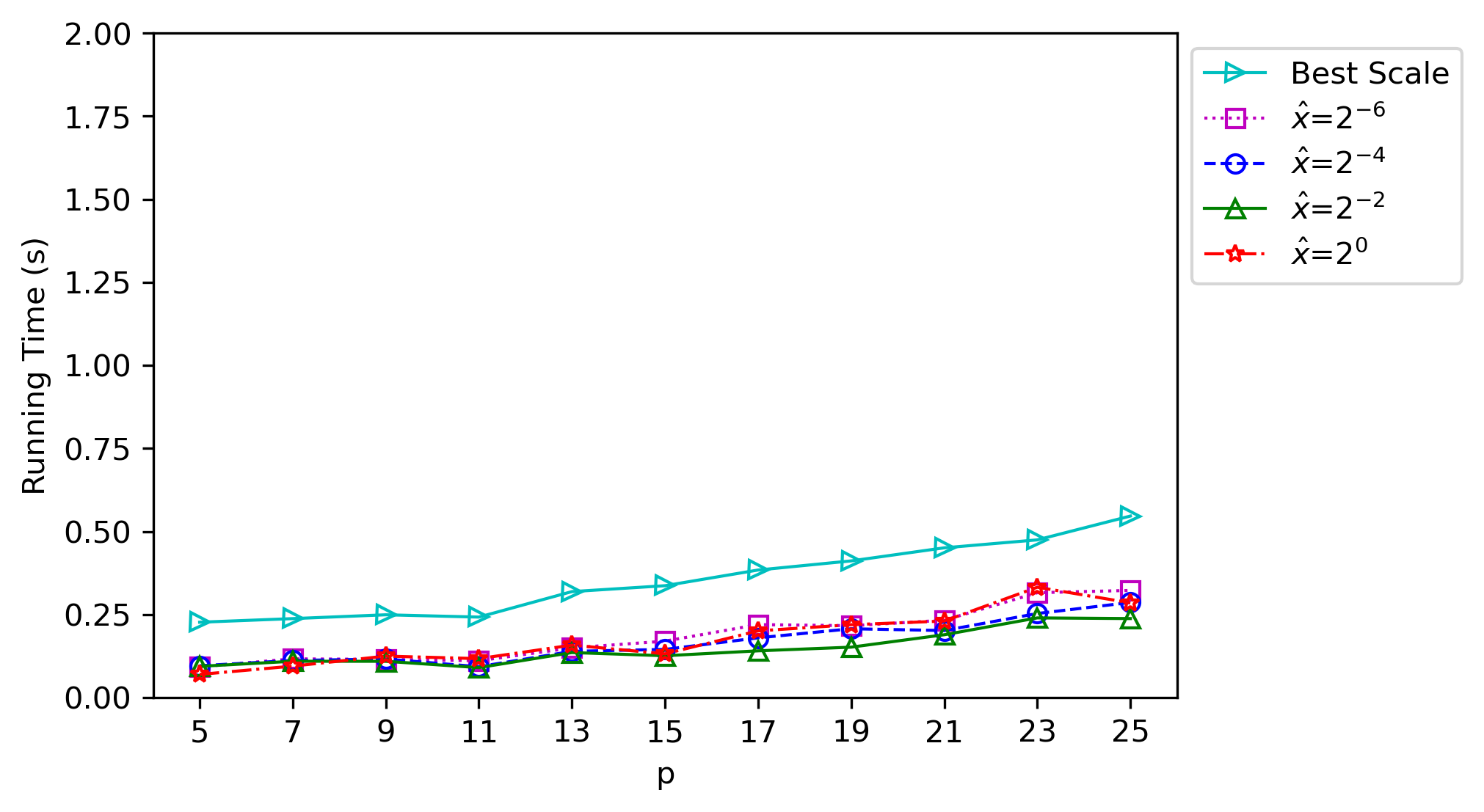} 
			\label{fig_ex3_time_k3} 
		}\hfill
		\subfloat[0.32\textwidth][Running Time for $N=5$]{
			\includegraphics[width=0.32\textwidth]{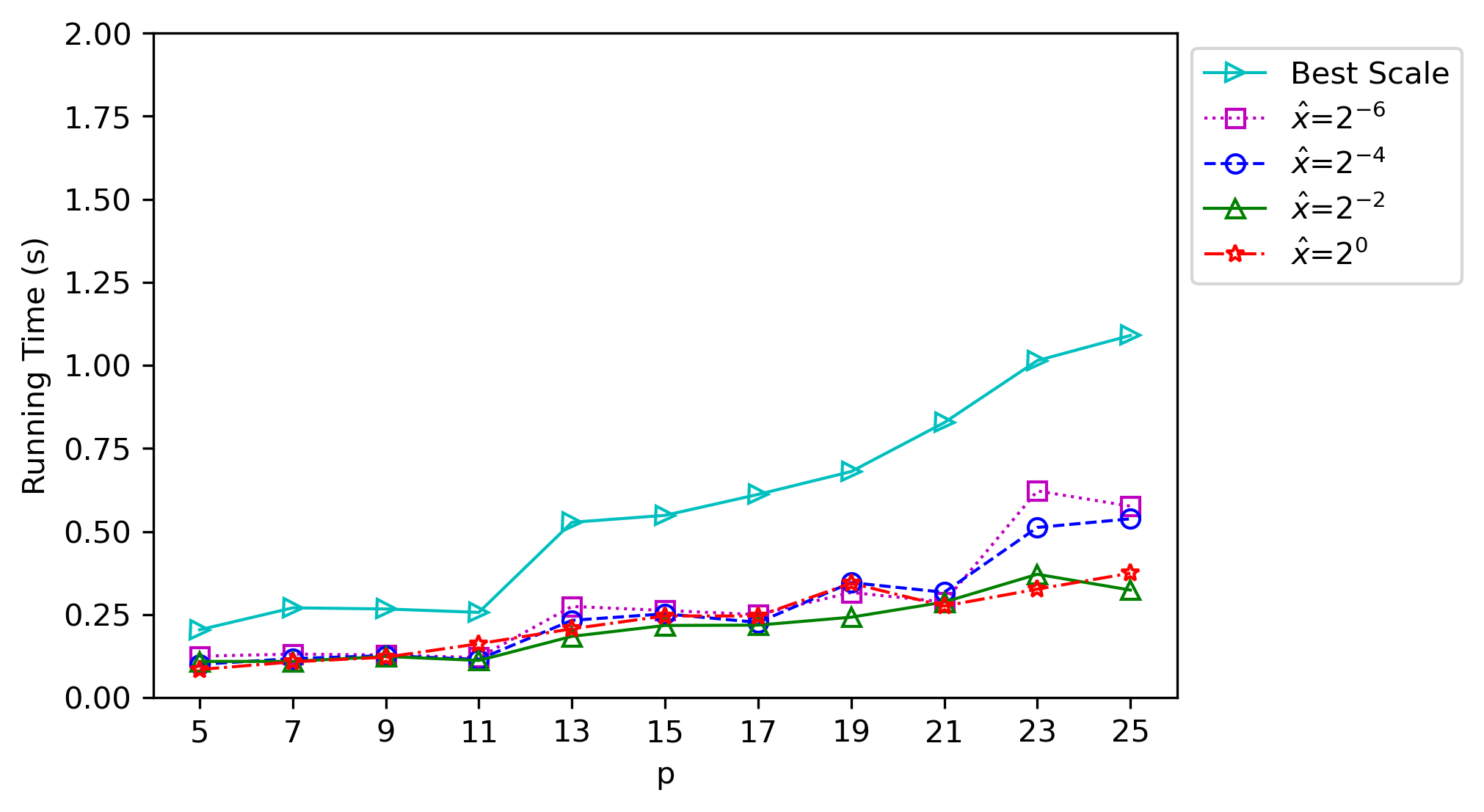} 
			\label{fig_ex3_time_k5} 
		}\hfill
		\subfloat[0.32\textwidth][Running Time for $N=7$]{
			\centering
			\includegraphics[width=0.32\textwidth]{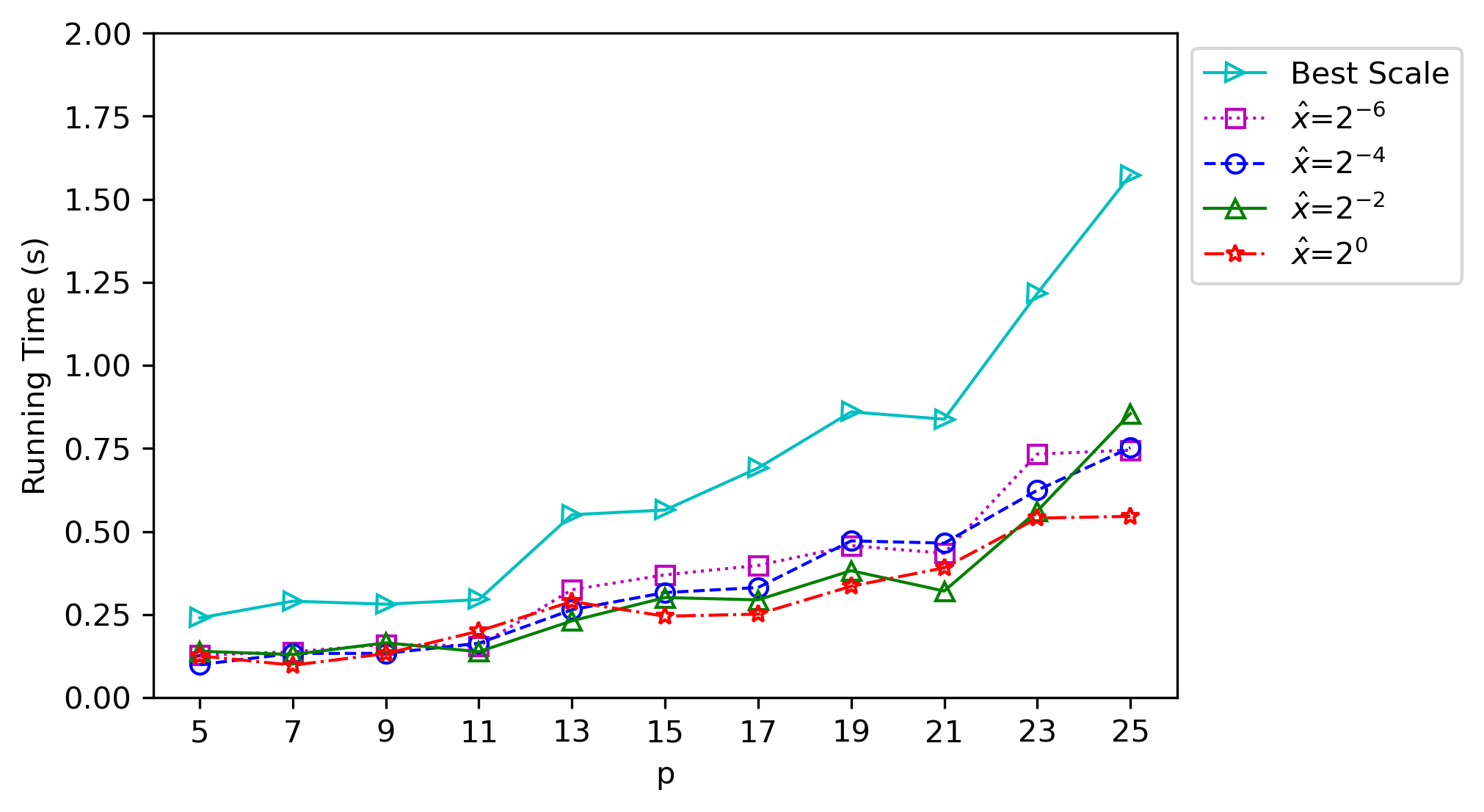} 
			\label{fig_ex3_time_k7}}
		\caption{Numerical Illustration of Example \ref{emp_3}: Small-Scale Binary Packing MIECP}\label{fig_ex3}
	\end{figure}
	
	Figure \ref{fig_sec3} illustrates the Gap and running time for SOC approximations in Section \ref{sec_exp} with $N\in\{1, 3, 5\}$. We use ``Section \ref{sec_exp_lim} Shift" and ``Section \ref{sec_exp_taylor} Shift" to denote the shifting methods based on the results in Section \ref{sec_exp_lim}  and Section \ref{sec_exp_taylor}, respectively. To find a proper approximation solution, we first solve each case using Example \ref{emp_3} with $N=1$ and the approximate solution $2^{-4}$ with 2-second time limit to find a feasible solution, and then use this solution to construct the approximate term for each exponential conic constraint. As shown in Figures \ref{fig_sec3_gap_N1}-\ref{fig_sec3_gap_N5}, the Gap for each method can be improved in general by increasing $N$, i.e., increasing the number of SOC constraints. However, only can the Section \ref{sec_exp_taylor} Shift method solve all the cases with Gap being less than $10^{-4}$ when $N=1$ and its Gap is around $10^{-7}$. When $N\in\{3, 5\}$, both Section \ref{sec_exp_lim} Shift and Section \ref{sec_exp_taylor} Shift methods can solve all the cases with Gap being less than $10^{-4}$. Both Section \ref{sec_exp_lim} and Section \ref{sec_exp_taylor} methods have the Gap being greater than $10^{-4}$ for all $N\in\{1, 3, 5\}$, which is probably due to the numerical issues. We see that the shifting method can remarkably improve the Section \ref{sec_exp_lim} and Section \ref{sec_exp_taylor} ones. We also observe that when increasing $N$, the Gap for the Section \ref{sec_exp_taylor} method decreases more compared to the Section \ref{sec_exp_lim} method. The Section \ref{sec_exp_taylor} method can solve the cases with a better Gap than that of the Section \ref{sec_exp_lim} method when $N\in\{3, 5\}$. 
	As shown in Figures \ref{fig_sec3_time_N1}-\ref{fig_sec3_time_N5}, Gurobi spends a longer time solving the cases with larger $p$ or $N$ values in general. Overall, Section \ref{sec_exp_taylor} method spends a longer time on solving the cases than Section \ref{sec_exp_lim} method, and Section \ref{sec_exp_taylor} Shift method spends a longer time than Section \ref{sec_exp_lim} Shift method. Although the shifting method increases the running time, we still recommend this method since it can solve the cases with the desirable Gap using fewer points than those without shifting. Besides, no-shifting approaches (i.e., Section \ref{sec_exp_lim} and Section \ref{sec_exp_taylor}) may run into numerical issues.
	On average, the Section \ref{sec_exp_lim} and Section \ref{sec_exp_taylor} methods spend around $0.1, 0.2, 0.3$ seconds, and the Section \ref{sec_exp_lim} Shift and Section \ref{sec_exp_taylor} Shift methods spend around $0.3, 0.4, 0.5$ seconds on solving problem \eqref{eq_num_study_exp_cone} with $N=1, 3, 5$, respectively. Overall, we may only need $N=1$ or $2$ for Section \ref{sec_exp_lim} Shift and Section \ref{sec_exp_taylor} Shift methods when solving larger instances.
	\begin{figure}[htbp]
		\centering
		\subfloat[0.32\textwidth][Gap for $N=1$]{
			\includegraphics[width=0.32\textwidth]{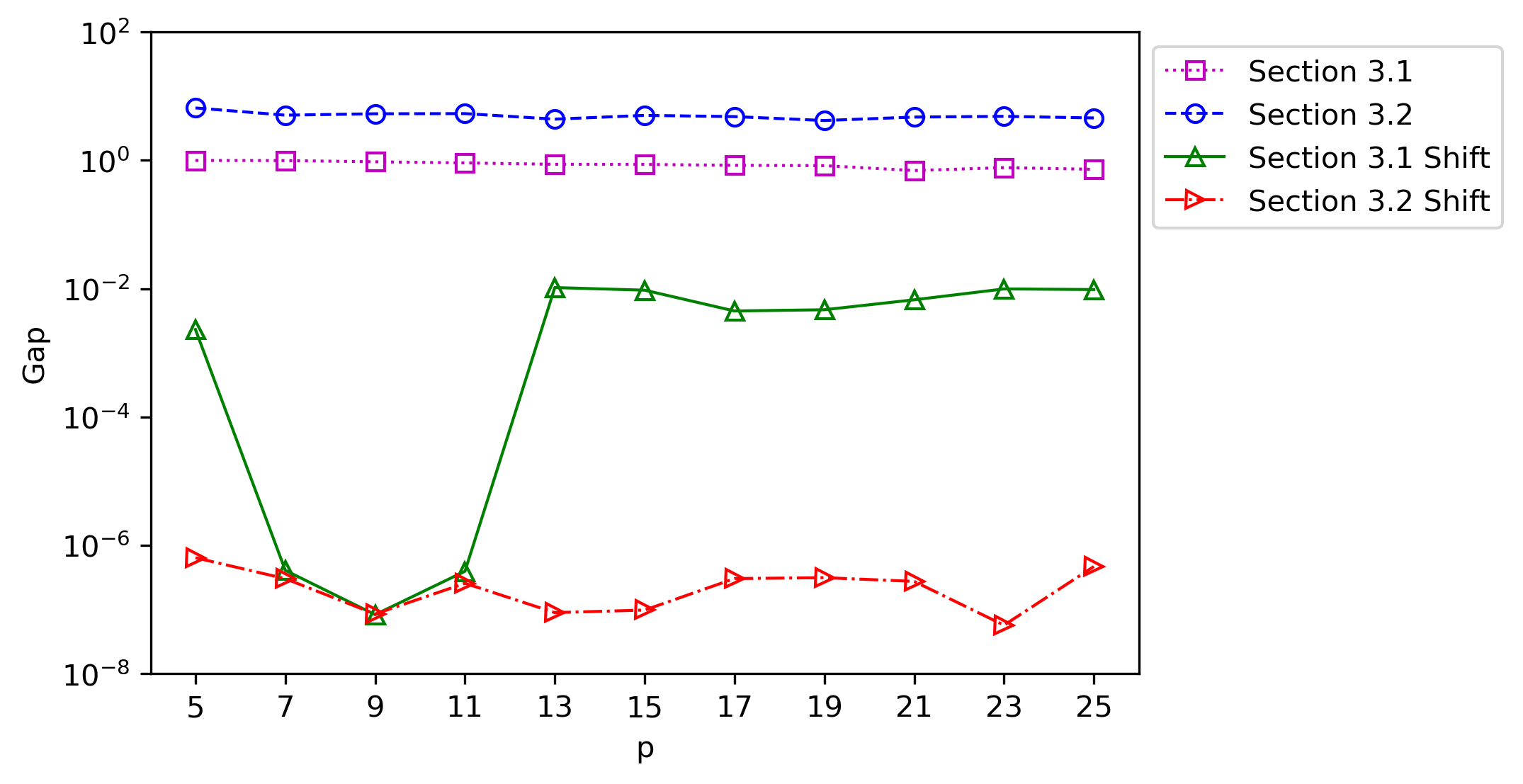} 
			\label{fig_sec3_gap_N1} 
		}\hfill
		\subfloat[0.32\textwidth][Gap for $N=3$]{
			\includegraphics[width=0.32\textwidth]{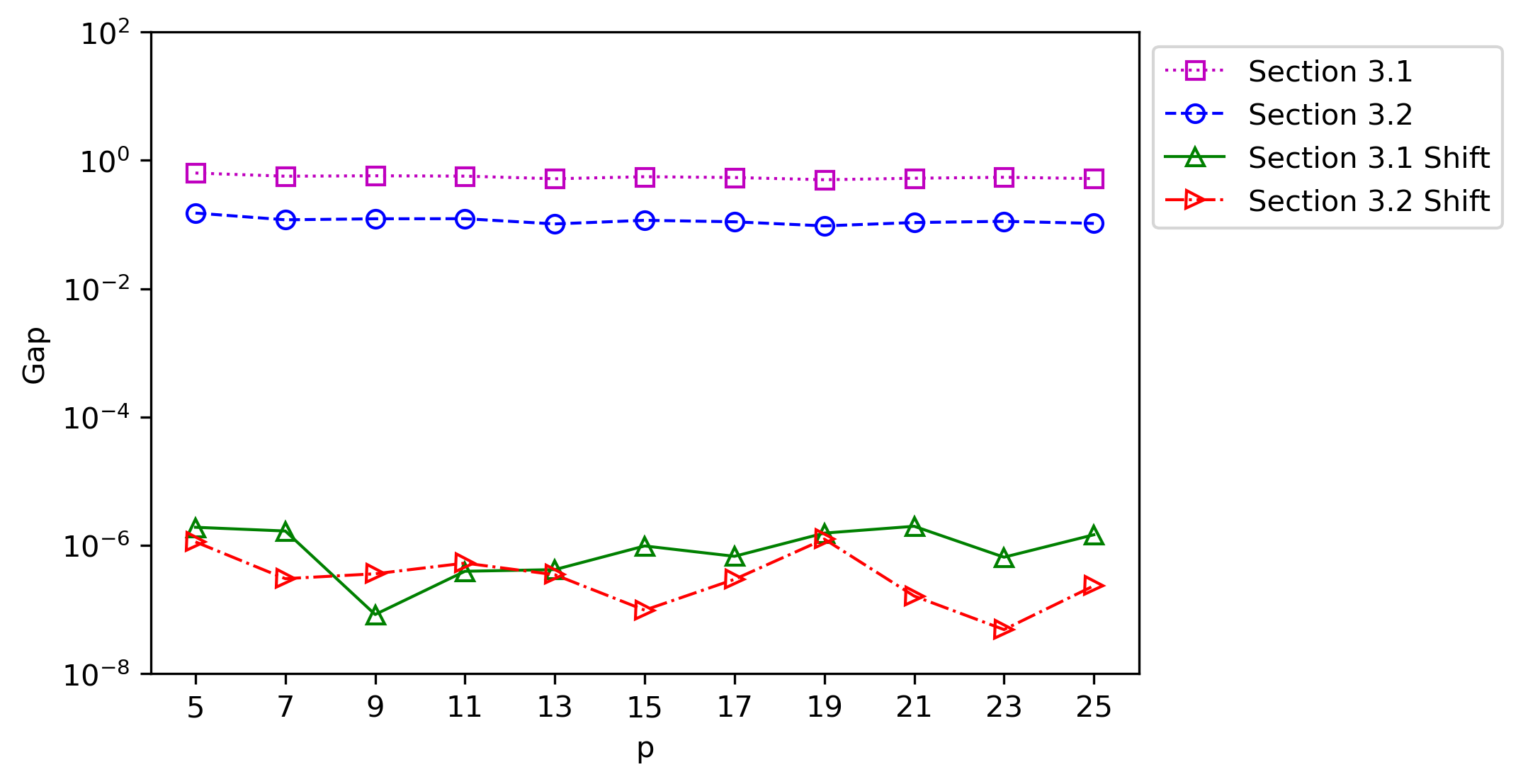} 
			\label{fig_sec3_gap_N3} 
		}\hfill
		\subfloat[0.32\textwidth][Gap for $N=5$]{
			\includegraphics[width=0.32\textwidth]{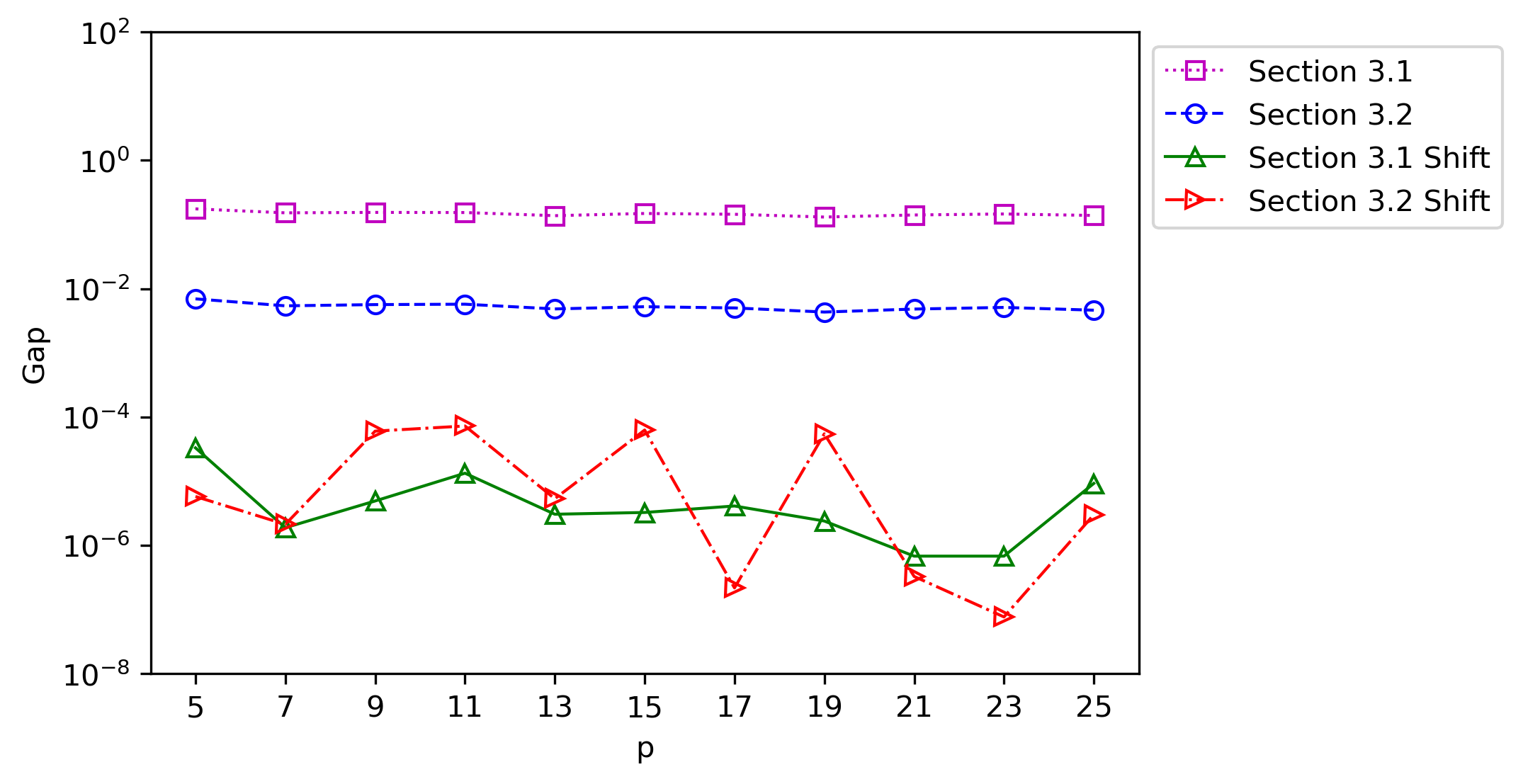} 
			\label{fig_sec3_gap_N5} 
		}\hfill
		\subfloat[0.32\textwidth][Running Time for $N=1$]{
			\includegraphics[width=0.32\textwidth]{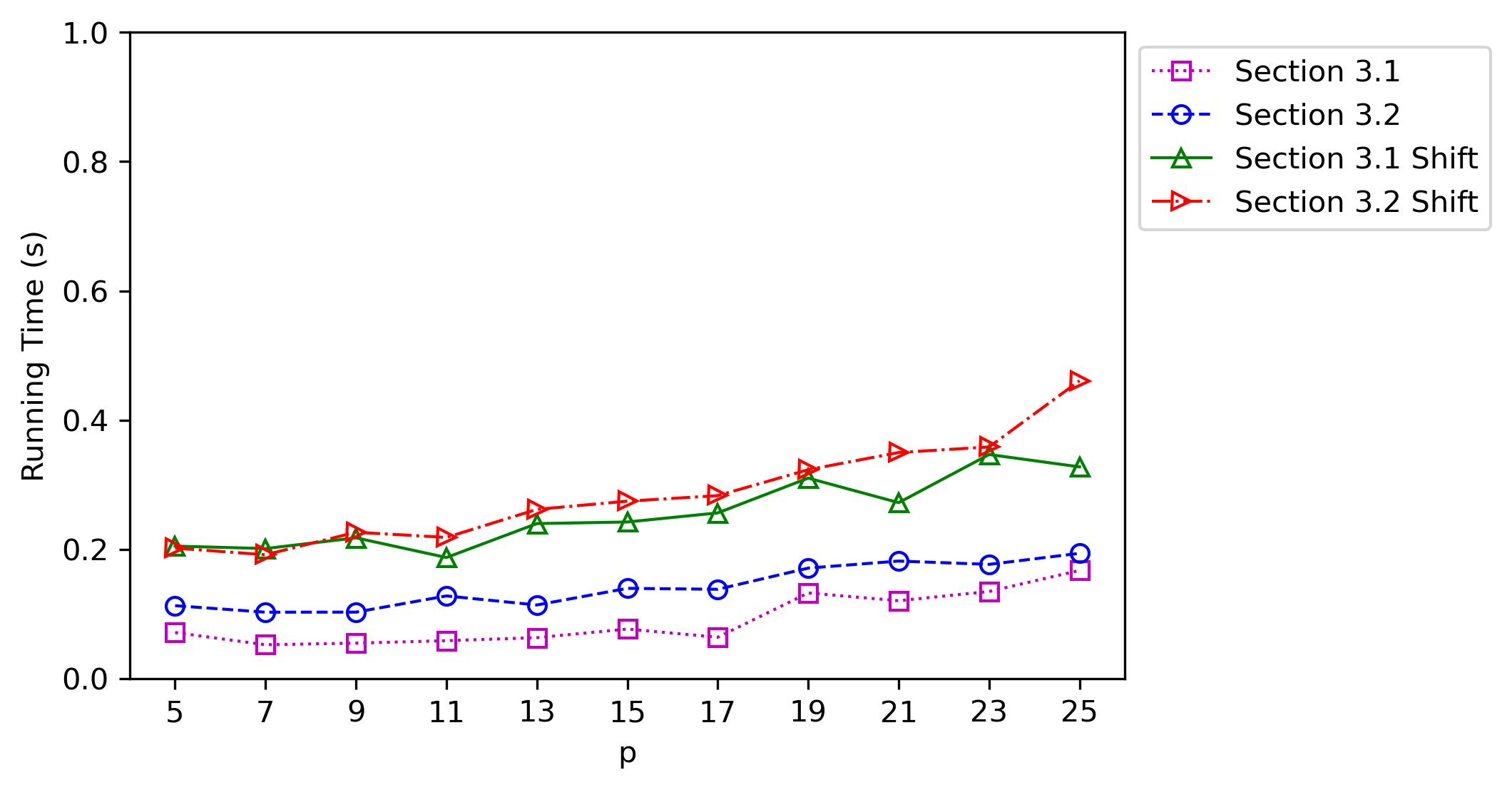} 
			\label{fig_sec3_time_N1} 
		}\hfill
		\subfloat[0.32\textwidth][Running Time for $N=3$]{
			\includegraphics[width=0.32\textwidth]{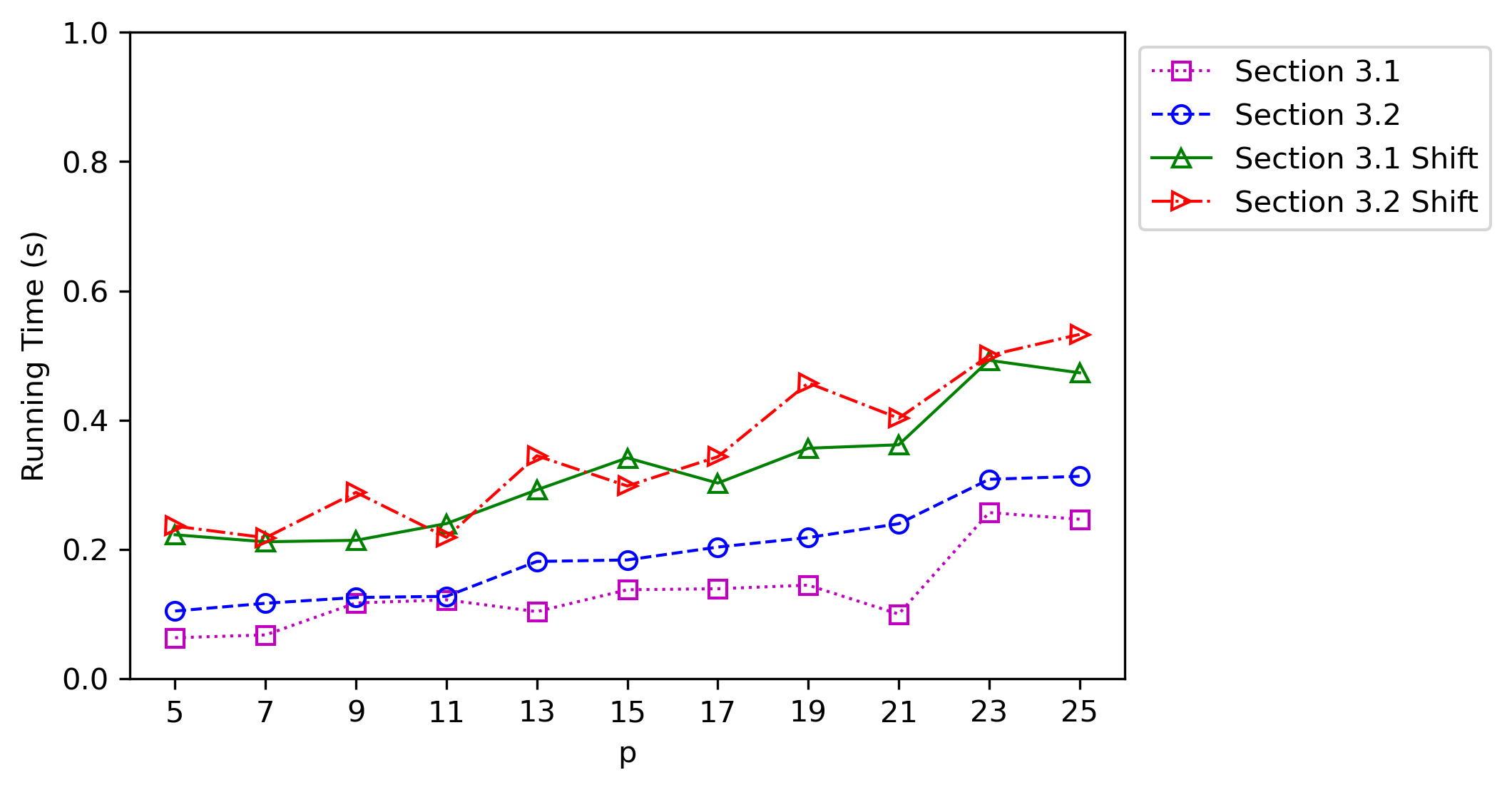} 
			\label{fig_sec3_time_N3} 
		}\hfill
		\subfloat[0.32\textwidth][Running Time for $N=5$]{
			\centering
			\includegraphics[width=0.32\textwidth]{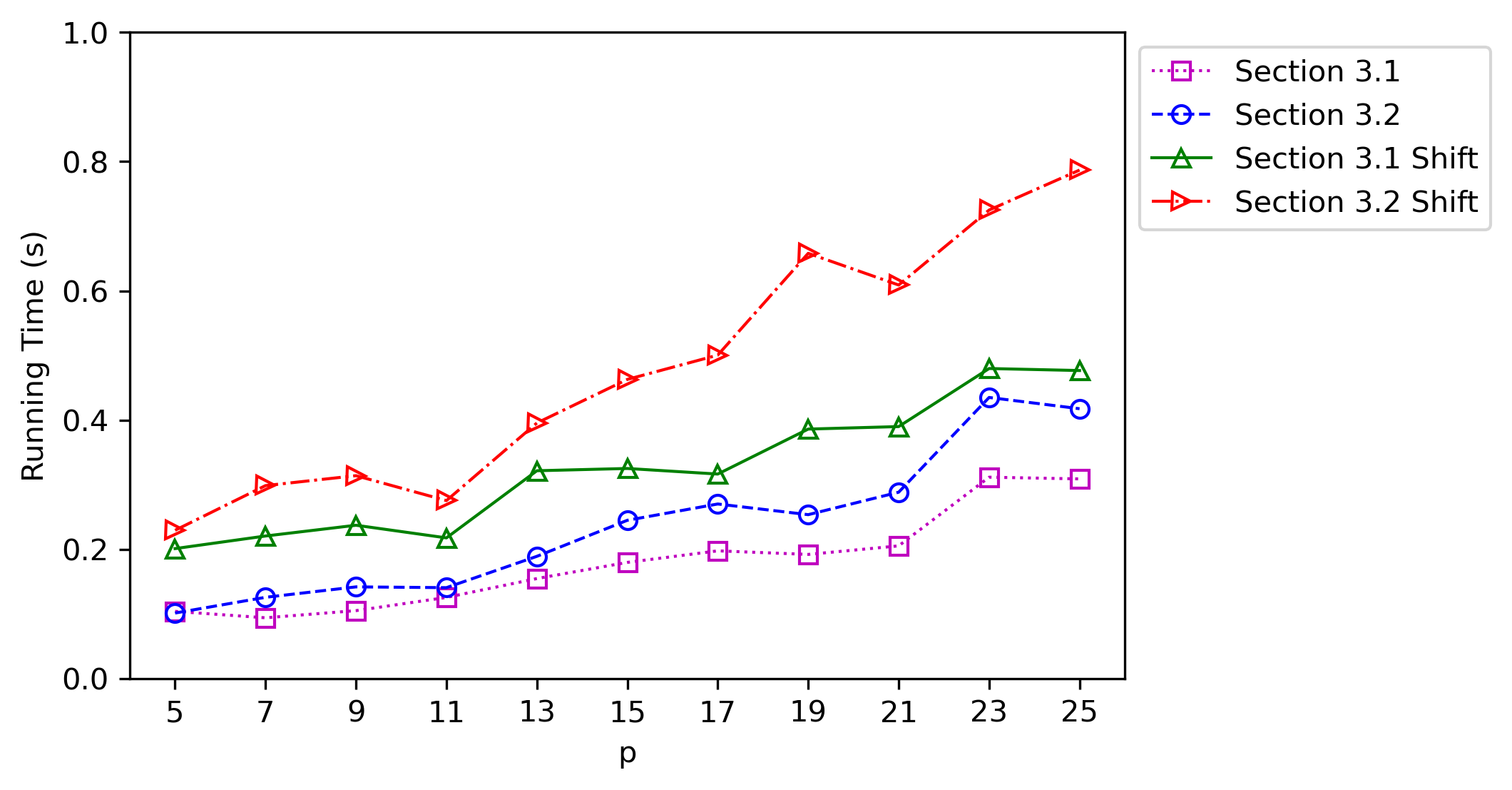} 
			\label{fig_sec3_time_N5}}
		\caption{Numerical Illustration of Section \ref{sec_exp} Methods: Small-Scale Binary Packing MIECP}\label{fig_sec3}
		\end{figure}

		\section{Continuous Packing ECP Results}\label{sec_continuous_packing}
		
		In this experiment, we test the proposed methods by solving the continuous packing ECP \eqref{eq_num_study_exp_cone}, where we consider small-scale and large-scale cases and compare our results with MOSEK. In our testing instances, we let $t=0$ and $a_{ij}\sim \mathrm{int}(0, 9)$, $b_i=4n$, $c_{\ell j}\sim -\mathrm{int}(0, 9)/n$ for all $i \in [m], j\in [n], \ell\in [p]$, where $\mathrm{int}(p, q)$ denotes a random integer between $p$ and $q$ including $p$ and $q$. In each case, we compute the relative optimality gap of MOSEK, denoted by ``Gap," which is defined as the absolute difference of the ratio of the best objective value from MOSEK over the exact value of the ECP by plugging in the optimal solution found by MOSEK and one. We align our accuracy same as the Gap of MOSEK. 
		
		In the small-scale experiment, we consider $n\in\{100, 200, \ldots, 1000\}$, $m=100$, and we solve 10 cases with $p\in\{10, 20, \ldots, 100\}$ for each $n$ and run Example \ref{emp_3} with $N=3$ and the approximate solution $2^{-6}$. By computing the Gap of MOSEK, we set the accuracy requirement as $10^{-4}$. Figure \ref{fig_conti_packing_small} illustrates the average running time for the small-scale continuous packing ECP for each $n$. 
		Both Example \ref{emp_3} and Cutting Plane methods outperform MOSEK for all the cases, and Example \ref{emp_3} spends a shorter time to solve the cases with larger $n$ compared to Cutting Plane. It is seen that the difference between the average running time of MOSEK and Example \ref{emp_3} increases as $n$ increases. 

		In the large-scale experiment, we consider $n\in\{200, 400, \ldots, 2000\}$, $m=n$, and we solve 10 cases with $p\in \{200, 400, \ldots, 2000\}$ for each $n$ and run Example \ref{emp_3} with $N=5$ and the approximate solution $2^{-4}$. By computing the Gap of MOSEK, we set the accuracy requirement as $10^{-5}$. Figure \ref{fig_conti_packing_large} illustrates the average running time for the large-scale continuous packing ECP for each $n$. 					 	
		We see that MOSEK is the best among all the methods in this experiment. Meanwhile, Example \ref{emp_3} and Cutting Plane methods work quite well, and these two methods spend a similar amount of time. Compared to MOSEK, the proposed approximation methods take a slightly longer time to solve the continuous problem with larger $n$. The differences among these three methods are relatively small.
		
		For continuous ECPs, MOSEK performs well, and the proposed methods are comparable to MOSEK. For small-scale problems, Example \ref{emp_3} and Cutting Plane are faster. MOSEK dominates when the problem size is large. While this paper focuses on MIECPs, we highlight that the proposed Cutting Plane method could be considered as an alternative approach for solving small-sized ECPs without parameter tuning.  
		
		\begin{figure}[htbp]
			\centering
			\subfloat[0.48\textwidth][Running Time for Small-Scale Continuous Packing ECP]{
				\centering
				\includegraphics[width=0.45\textwidth]{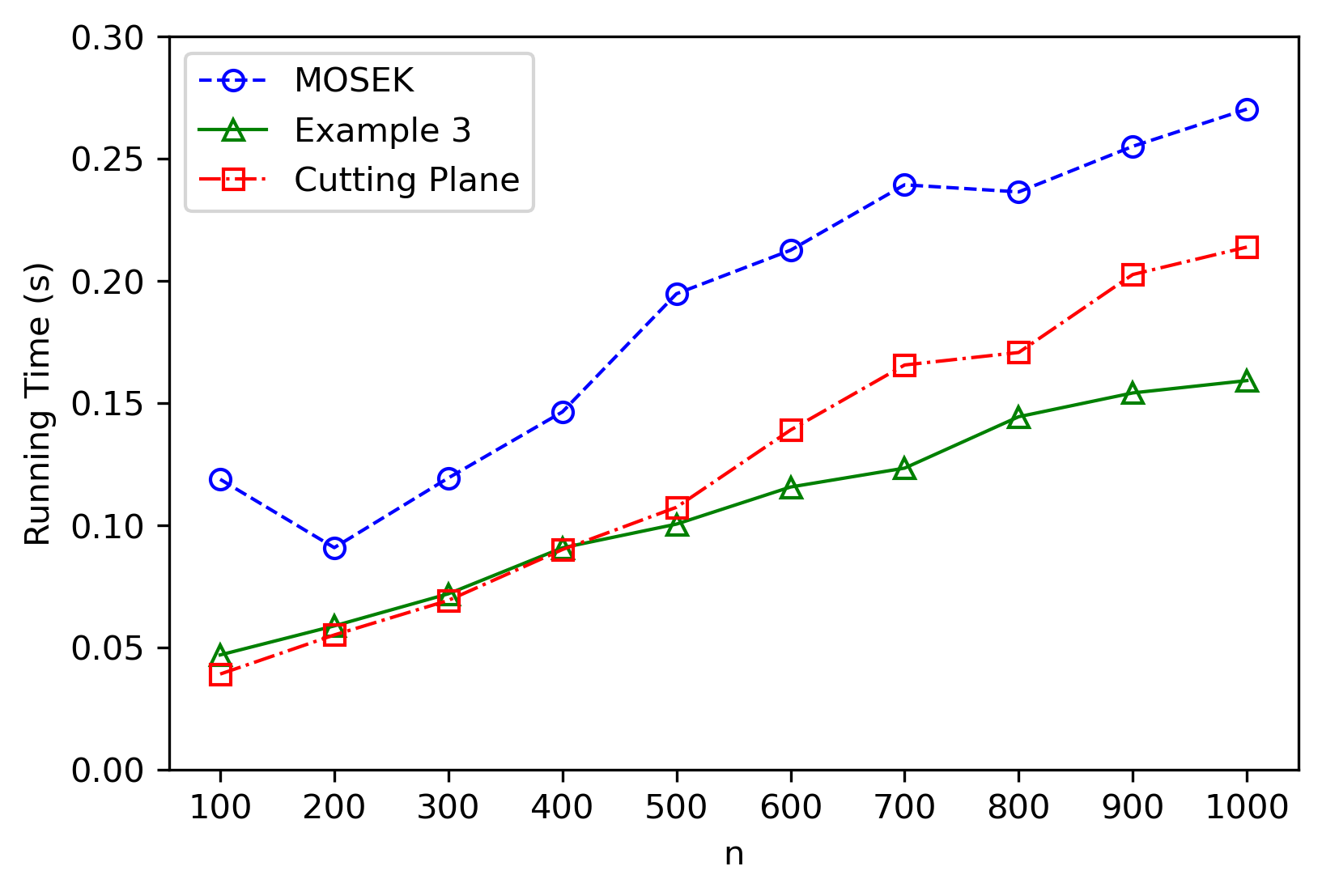}
				\label{fig_conti_packing_small}
			}\hfill
			\subfloat[0.48\textwidth][Running Time for Large-Scale Continuous Packing ECP]{
				\centering
				\includegraphics[width=0.44\textwidth]{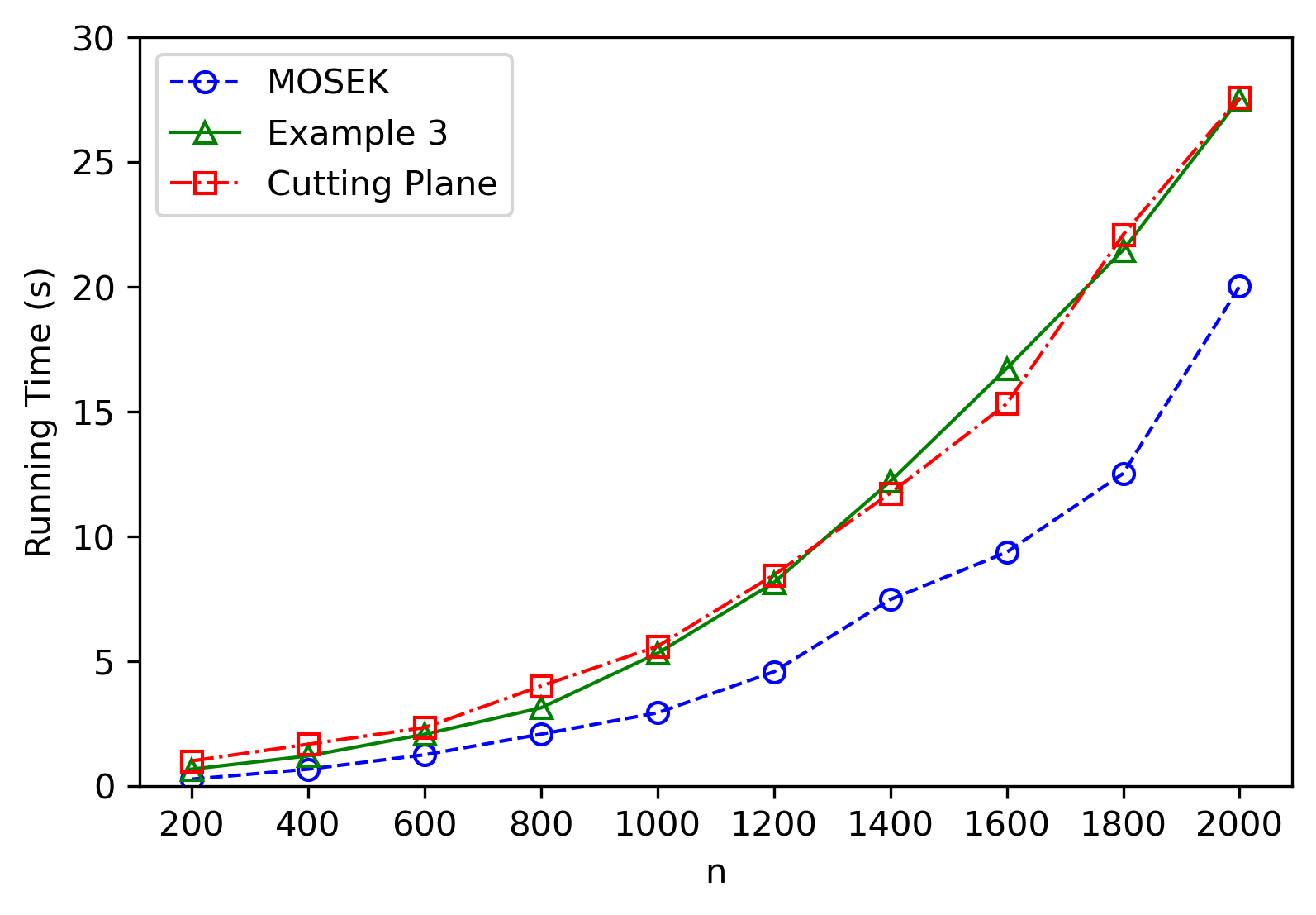}
				\label{fig_conti_packing_large}}
			\caption{Numerical Illustration of MOSEK and Approximation Methods: Continuous Packing ECP}\label{fig_conti_packing}
		\end{figure}

	\end{appendices}

\end{document}